\theoremstyle{definition}
\newtheorem{theorem}{Theorem}
\newtheorem*{theorem*}{Theorem}
\numberwithin{theorem}{section}
\newtheorem{definition}[theorem]{Definition}
\newtheorem*{definition*}{Definition}
\newtheorem{proposition}[theorem]{Proposition}
\newtheorem{lemma}[theorem]{Lemma}
\newtheorem{remark}[theorem]{Remark}
\newtheorem{example}[theorem]{Example}
\newtheorem{cor}[theorem]{Corollary}
\newenvironment{customthm}[1]
  {\innercustomthm}
  {\endinnercustomthm}
\DeclareMathOperator{\grad}{grad}
\DeclareMathOperator{\id}{id}
\DeclareMathOperator{\gr}{gr}
\DeclareMathOperator{\initial}{in}
\DeclareMathOperator{\MultiProj}{MultiProj}
\DeclareMathOperator{\Proj}{Proj}
\newcommand{\la}{\lambda}
\newcommand{\om}{\omega}
\newcommand{\cU}{\mathcal U}
\newcommand{\cL}{\mathcal L}
\newcommand{\cJ}{\mathcal J}
\newcommand{\cO}{\mathcal O}
\newcommand{\fg}{\mathfrak{g}}
\newcommand{\fsl}{\mathfrak{sl}}
\newcommand{\fh}{\mathfrak{h}}
\newcommand{\bC}{\mathbb{C}}
\newcommand{\bP}{\mathbb{P}}
\newcommand{\bR}{\mathbb{R}}
\newcommand{\bZ}{\mathbb{Z}}
\newcommand{\bd}{{\mathbf d}}
\newcommand{\al}{\langle}
\newcommand{\ar}{\rangle}
\renewcommand{\mod}{\ \mathrm{mod}\ }
\DeclareFontFamily{U}{mathx}{\hyphenchar\font45}
\DeclareFontShape{U}{mathx}{m}{n}{
      <5> <6> <7> <8> <9> <10>
      <10.95> <12> <14.4> <17.28> <20.74> <24.88>
      mathx10
      }{}
\DeclareSymbolFont{mathx}{U}{mathx}{m}{n}
\DeclareMathSymbol{\bigtimes}{1}{mathx}{"91}
\DeclareMathSymbol{\lsb@l}{\mathalpha}{letters}{`l}
\mathchardef\newbracket=\mathcode`)
\mathchardef\newcomma=\mathcode`,
\title[Chain-order polytopes: degenerations, tableaux and bases]{Chain-order polytopes: toric degenerations, Young tableaux and monomial bases}
\author{Igor Makhlin}
\address{Technical University of Berlin}
\email{iymakhlin@gmail.com}
\begin{document}

\maketitle

\begin{abstract} 
Our first result realizes the toric variety of every marked chain-order polytope (MCOP) of the Gelfand--Tsetlin poset as an explicit Gr\"obner (sagbi) degeneration of the flag variety. This generalizes the Sturmfels/Gonciulea--Lakshmibai/Kogan--Miller construction for the Gelfand--Tsetlin degeneration to the MCOP setting. The key idea of our approach is to use pipe dreams to define realizations of toric varieties in Pl\"ucker coordinates. We then use this approach to generalize two more well-known constructions to arbitrary MCOPs: standard monomial theories such as those given by semistandard Young tableaux and PBW-monomial bases in irreducible representations such as the FFLV bases. In an addendum we introduce the notion of semi-infinite pipe dreams and use it to obtain an infinite family of poset polytopes each providing a toric degeneration of the semi-infinite Grassmannian.
\end{abstract}


\section*{Introduction}

In the study of flat degenerations of various geometric objects particular attention is paid to degenerations of Lie-theoretic varieties such as flag and Schubert varieties. Recent decades have seen a broad range of results in this field (\cite{GL,Ch,Ca,KM,AB,Fe,FFL2,Ka,GHKK,FaFL1} and many others) shape into a theory at the interplay of algebraic geometry, combinatorics, representation theory and commutative algebra.

In particular, a constantly-growing array of works provides new methods of obtaining such degenerations. These papers often proceed by attaching a degeneration to every combinatorial or algebraic object of a certain form. Examples include adapted decompositions in the Weyl group (\cite{Ca}), specific valuations on the function field (\cite{Ka}), specific birational sequences (\cite{FaFL2}) and coherent matching fields (\cite{CM}). While these notions and the discovered connections are of great interest, not many explicit yet general construction methods are known for these attached objects. This leads to a certain shortage of concrete recipes for constructing flat degenerations that would work in a general situation.

Let us consider the fundamental case of toric degenerations of type A flag varieties which will be discussed in this paper. The first and best-known such degeneration is the Gelfand--Tsetlin degeneration due to~\cite{Stu,GL,KM} given by the Gelfand--Tsetlin polytope of~\cite{GT}. A more recent construction is the FFLV degeneration due to~\cite{FFL2,FFFM} given by the Feigin--Fourier--Littelmann--Vinberg polytope of~\cite{FFL1}. Until recently these two constructions remained the only known explicit definitions of polytopes that would provide a toric degeneration for every flag variety of type A (despite it being well known that plenty of other degenerations exist, some numerical results are found in~\cite{BLMM}). A step towards closing this gap was made by the paper~\cite{Fu} which proved that each of $2^{n(n-1)/2}$ explicitly defined polytopes provides a toric degeneration for a flag variety of type $\mathrm A_{n-1}$. Let us give some combinatorial background for this result.

The notion of poset polytopes originates in~\cite{St} where two polytopes were associated with every poset: the order and the chain polytope. These definitions have been substantially generalized, in particular, Gelfand-Tsetlin and FFLV polytopes are, respectively, marked order and marked chain polytopes of the Gelfand--Tsetlin poset, see~\cite{ABS}. The even broader class of marked chain-order polytopes (MCOPs) has been defined and studied in~\cite{FF,FFLP,FFP}. In the case of the Gelfand--Tsetlin poset every such polytope $\cO_{O,C}(\la)$ is given by a decreasing $n$-tuple of integers $\la$ and a partition $O\sqcup C$ of the poset (enumerated by positive roots) into two subsets, hence $2^{n(n-1)/2}$. The case $C=\varnothing$ then provides the Gelfand--Tsetlin polytope while the case $O=\varnothing$ the FFLV polytope. In~\cite{Fu} it is proved that the toric variety of every such polytope is a flat degeneration of the flag variety corresponding to $\la$.

Now, it must be noted that the main object of study in~\cite{Fu} are Newton--Okounkov bodies. Toric degenerations are obtained as a consequence via the results in~\cite{A} which, in a general setting, show that Newton--Okounkov polytopes provide toric degenerations. The initial motivation of this paper is to give a more direct approach to the realization of these flat degenerations. Specifically, we aim to generalize the explicit Gr\"obner degeneration constructions known for the Gelfand--Tsetlin and FFLV cases to all MCOPs. We now sketch what such a generalization would look like.


The flag variety is realized by the Pl\"ucker ideal $I$ in the polynomial ring in the Pl\"ucker variables $X_{i_1,\dots,i_k}$. Meanwhile, the toric variety of $\cO_{O,C}(\la)$ is realized by a toric ideal $I^{O,C}$ in the polynomial ring in variables $X_J$ labeled by order ideals $J$ in the Gelfand--Tsetlin poset. To realize the toric variety as a Gr\"obner degeneration of the flag variety we are to define an isomorphism between the two polynomial rings which would map $I^{O,C}$ to an initial ideal $\initial_< I$ with respect to some monomial order $<$. This is precisely the approach used in~\cite{GL} and~\cite{FFFM} and the challenge is to find a common generalization of the two constructions that would apply to all MCOPs. The solution is given by the notion of pipe dreams which can be viewed as combinatorial rule for associating a permutation $w_M$ with every subset $M$ of the positive roots (or elements of the Gelfand--Tsetlin poset). This notion originates in~\cite{FK} and plays an instrumental role in the algebraic combinatorics of flag varieties (\cite{BB,K,KnM,KM,KST} and others). We prove the following.
\begin{customthm}{A}[cf.\ Theorem~\ref{degenmain}]
For every partition $O\sqcup C$ of the Gelfand--Tsetlin poset and every order ideal $J$ we have a subset $M_J$ in the poset and an integer $k_J$ such that the following holds. The map \[\psi:X_J\mapsto X_{w_{M_J}(1),\dots,w_{M_J}(k_J)}\] is an isomorphism and there exists a monomial order $<$ for which $\psi(I^{O,C})=\initial_< I$. This realizes the toric variety of $\cO_{O,C}(\la)$ as a degeneration of the flag variety.
\end{customthm}

Both the Gelfand--Tsetlin and the FFLV degenerations are accompanied by a collection of intimately related algebraic and geometric constructions. Two well-known examples include standard monomial theories and PBW-monomial bases in irreducible representations. Our subsequent results generalize these two concepts to the MCOP setting.

The classical example of a standard monomial theory is the basis in the Pl\"ucker algebra given by monomials $\prod_j X_{i^j_1,\dots,i^j_{k_j}}$ for which the Young tableau with elements $i^a_b$ is semistandard. 
Each tableau defines a point $x$ with coordinate $x_{r,s}$ equal to the number of elements $s$ in row $r$. The convex hull of such points over all semistandard tableaux of shape $\la$ is unimodularly equivalent to the Gelfand--Tsetlin polytope, this observation is essentially due to~\cite{KM}.
The FFLV counterpart was defined in~\cite{Fe} in the form of PBW-semistandard Young tableaux which provide a basis in the Pl\"ucker algebra in a similar way and have a similar connection with FFLV polytopes. Our generalization has the following form.
\begin{customthm}{B}[cf. Theorem~\ref{standardmain}, Proposition~\ref{tableauxpolytope}]
For every partition $O\sqcup C$ there exists a notion of $(O,C)$-semistandard Young tableaux such that monomials $\prod_j X_{i^j_1,\dots,i^j_{k_j}}$ for which the tableau with elements $i^a_b$ is $(O,C)$-semistandard (standard monomials) provide a basis in the Pl\"ucker algebra. 
The convex hull of points $x$ defined as above over all $(O,C)$-semistandard Young tableaux of shape $\la$ is unimodularly equivalent to $\cO_{O,C}(\la)$.
\end{customthm}

For an integral dominant weight $\la$ consider the irreducible representation $V_\la$ with highest weight vector $v_\la$. Let $P$ denote the set of pairs $1\le i\le j\le n$. Every pair in $P$ with $i<j$ corresponds to a negative root vector $f_{i,j}$. Every point $x\in\bZ_{\ge0}^P$ corresponds to the product $f^x=\prod_{i<j} f_{i,j}^{x_{i,j}}$ (where we fix an ordering of the roots). If a collection $A$ of $\dim V_\la$ points is particularly nice, the set $\{f^x v_\la\}_{x\in A}$ is a basis in $V_\la$ known as a ``monomial basis''. For general $\la$ there are two known constructions of polytopes in $\bR^P$ whose integer point sets are particularly nice in this sense. The first is the FFLV polytope which provides the FFLV basis of~\cite{FFL1}. The second is a unimodularly transformed version of the Gelfand--Tsetlin polytope considered in various forms in~\cite{R,KM,M1,MY}, it provides a somewhat less-known but also interesting monomial basis. We generalize these bases to the MCOP setting (note that MCOPs are naturally embedded into $\bR^P$).
\begin{customthm}{C}[cf. Theorem~\ref{basismain}]
For every $O\sqcup C$ and $\la$ there exists a unimodular transformation $\xi$ of $\bR^P$ such that $\{f^{\xi(x)} v_\la\}_{x\in \cO_{O,C}(\la)\cap\bZ^P}$ is a basis in $V_\la$.
\end{customthm}

Finally, we give an addendum where we broaden the scope of our method by applying it in the semi-infinite setting. Semi-infinite Grassmannians are varieties of infinite type introduced in~\cite{FF,FiM} and playing an important role in the theory of Weyl modules, see~\cite{BF,FeM,Kat,DF}. They also parametrize rational curves in Grassmannians, this viewpoint is studied in~\cite{So,SoS} where the term ``quantum Grassmannian'' is used. Less is known about toric degenerations of these varieties, until recently the only known construction was due to~\cite{SoS}. In~\cite{FMP} another toric degeneration was obtained, there the two degenerations were interpreted via poset polytopes of a certain infinite poset $Q$, similarly to the Gelfand--Tsetlin and FFLV degenerations. These poset polytopes are chain-order polytopes: simpler versions of MCOPs depending only on a partition $O\sqcup C=Q$. Here we introduce the notion of semi-infinite pipe dreams which associates a permutation with every finite subset of $Q$ and using this notion prove the following.
\begin{customthm}{D}[cf.\ Theorem~\ref{infmain}]    
For every partition $O\sqcup C=Q$ for which $O$ contains a certain distinguished subset the corresponding chain-order polytope provides a toric degeneration of the semi-infinite Grassmannian.
\end{customthm}
Note that each such degeneration immediately provides a combinatorial character formula for every Weyl module whose highest weight is a multiple of the respective fundamental weight. Indeed, the semi-infinite Grassmannian's coordinate ring is a sum of the duals of such Weyl modules (\cite{BF,FeM,Kat}), hence a toric degeneration of this ring expresses the character as a sum over a polytope's lattice points.

\textbf{Acknowledgments.} The author would like to thank the Weizmann Institute of Science for giving a much-needed refuge and a warm welcome to Russian mathematicians.

\section{Preliminaries and generalities}\label{preliminaries}

\subsection{Degenerations of Pl\"ucker algebras}

We start by recalling some general facts about the Pl\"ucker embedding and its Gr\"obner theory. Consider integers $n>1$ and $1\le d_1<\dots<d_l\le n-1$, denote the tuple $(d_1,\dots,d_l)$ by $\bd$. Let $F_\bd$ be the variety of flags of signature $\bd$ in $\bC^n$. The Pl\"ucker embedding realizes $F_\bd$ as a subvariety in \[\bP_\bd=\bP(\wedge^{d_1}\bC^n)\times\dots\times\bP(\wedge^{d_l}\bC^n).\] The multihomogeneous coordinate ring of $\bP_\bd$ is $S_\bd=\bC[X_{i_1,\dots,i_k}]_{k\in\bd,1\le i_1<\dots<i_k\le n}$, it is equipped with a $\bZ^{n-1}$-grading $\grad$ with $\grad X_{i_1,\dots,i_k}$ the $k$th basis vector $\varepsilon_k$. $F_\bd$ is the zero set of the $\grad$-homogeneous \textit{Pl\"ucker ideal} $I_\bd\subset S_\bd$ and the \textit{Pl\"ucker algebra} $S_\bd/I_\bd$ is the multihomogeneous coordinate ring of $F_\bd$ with respect to the Pl\"ucker embedding. Alternatively, one can say that $F_\bd$ is the variety $\MultiProj{S_\bd/I_\bd}$ with respect to the induced $\bZ^{n-1}$-grading. 

The Pl\"ucker ideal is realized as the kernel of \[\varphi:S_\bd\to T=\bC[z_{i,j}]_{1\le i\le n-1,1\le j\le n}\] with $\varphi(X_{i_1,\dots,i_k})=D_{i_1,\dots,i_k}$ where the latter denotes the minor of the matrix $(z_{i,j})_{i,j}$ spanned by rows $1,\dots,k$ and columns $i_1,\dots,i_k$. We will denote the image $\varphi(S_\bd)$ by $R_\bd$, it is, of course, isomorphic to the Pl\"ucker algebra. The ring $T$ is also equipped with a $\bZ^{n-1}$ grading $\grad$ with $\grad z_{1,j}=\varepsilon_1$ and $\grad z_{i,j}=\varepsilon_i-\varepsilon_{i-1}$ for $i>1$. The map $\varphi$ is $\grad$-homogeneous. Below the induced $\bZ^{n-1}$ gradings on quotients and subspaces of $S_\bd$ and $T$ will also be denoted by $\grad$. The graded component of a space $U$ corresponding to $\la\in\bZ^{n-1}$ will be denoted by $U[\la]$.

Next, for any polynomial ring $\bC[x_a]_{a\in A}$ we understand a \textit{monomial order} $<$ on $\bC[x_a]_{a\in A}$ to be a partial order on the set of monomials with the following two properties.
\begin{itemize}
\item The order is multiplicative: for any two monomials one has $M_1<M_2$ if and only if $M_1x_a<M_2x_a$ for all $a\in A$. 
\item The order is weak, i.e.\ incomparability is an equivalence relation.
\end{itemize}
Note that every total order is weak. In general, weak orders arise as inverse images of total orders. Moreover, every monomial order can be obtained by applying a monomial specialization and then comparing the results lexicographically. We will not be using this general fact, see~\cite[Theorem 1.2]{KNN} for a proof and further context (note that they use the term ``monomial preorder''). 

For a monomial order $<$ and a polynomial $p\in \bC[x_a]_{a\in A}$ one considers the initial part $\initial_< p$ equal to the sum of those monomials occurring in $p$ which are maximal with respect to $<$, taken with the same coefficients as in $p$. For any subspace $U\subset\bC[x_a]_{a\in A}$ one considers its initial subspace $\initial_< U$ spanned by all $\initial_< p$ with $p\in U$. One easily checks that the initial subspace of an ideal is an ideal (the initial ideal) and the initial subspace of a subalgebra is a subalgebra (the initial subalgebra). 

We will also use the notion of sagbi bases where ``sagbi'' stands for ``subalgebra analogue of Gr\"obner bases in ideals''.
\begin{definition}
For a monomial order $<$ on $\bC[x_a]_{a\in A}$ and a subalgebra $U\subset\bC[x_a]_{a\in A}$ a generating set $\{s_b\}_{b\in B}\subset U$ is a \textit{sagbi basis} of $U$ if $\{\initial_< s_b\}_{b\in B}$ generates $\initial_< U$.
\end{definition}

We will chiefly use these Gr\"obner-theoretic notions in two contexts: initial ideals of the Pl\"ucker ideal and initial subalgebras of the Pl\"ucker algebra. A connection between these contexts is given by the below definition and fact.
\begin{definition}
For a monomial order $<$ on $T$ let $\varphi_<:S_\bd\to T$ denote the monomial map given by $\varphi_<(X_{i_1,\dots,i_k})=\initial_< D_{i_1,\dots,i_k}$.  Let $<^\varphi$ denote the monomial order on $S_\bd$ given by $M_1<^\varphi M_2$ if and only if $\varphi_<(M_1)<\varphi_<(M_2)$, i.e.\ the inverse image of $<$ under $\varphi_<$.
\end{definition}

\begin{proposition}\label{idealsubalg}
Consider a monomial order $<$ on $T$ for which the determinants $D_{i_1,\dots,i_k}$ form a sagbi basis of $R_\bd$. Then $\initial_{<^\varphi} I_\bd=\ker\varphi_<$ and $S_\bd/\initial_{<^\varphi} I_\bd$ is isomorphic to $\initial_< R_\bd$.
\end{proposition}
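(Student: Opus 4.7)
The plan is to prove the ideal identity $\initial_{<^\varphi} I_\bd = \ker \varphi_<$ by two opposite inclusions, from which the claimed algebra isomorphism $S_\bd/\initial_{<^\varphi} I_\bd \cong \initial_< R_\bd$ will follow formally from the first isomorphism theorem combined with the sagbi hypothesis. The recurring tool throughout is the observation that $\initial_<$ is multiplicative on monomials, so $\varphi_<(X^\alpha) = \initial_< \varphi(X^\alpha)$ for every $S_\bd$-monomial $X^\alpha$; this is exactly what links the pullback order $<^\varphi$ on $S_\bd$ to the leading-term behavior of $\varphi$, even though $\varphi_<$ does not agree with $\initial_< \circ \varphi$ on general polynomials.

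For the inclusion $\initial_{<^\varphi} I_\bd \subseteq \ker \varphi_<$, which does not require the sagbi hypothesis, I would take $p = \sum_\alpha c_\alpha X^\alpha \in I_\bd$ and note that $\initial_{<^\varphi} p$ collects exactly those $X^\alpha$ for which $\varphi_<(X^\alpha)$ lies in the top $<$-class among $\{\varphi_<(X^\beta)\}_{\beta \in \operatorname{supp}(p)}$. Grouping the vanishing expression $\varphi(p) = 0$ by the $<$-classes of the leading monomials of each $\varphi(X^\alpha)$, the top-class contribution is precisely $\varphi_<(\initial_{<^\varphi} p)$ and must therefore be zero.

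The reverse inclusion $\ker \varphi_< \subseteq \initial_{<^\varphi} I_\bd$ is the substantive step and is where the sagbi hypothesis enters. Because $<^\varphi$ is the pullback of $<$, distinct $<^\varphi$-classes of $S_\bd$-monomials map under $\varphi_<$ into distinct $<$-classes of $T$, and hence any $q \in \ker \varphi_<$ decomposes into its $<^\varphi$-homogeneous pieces each of which still lies in $\ker \varphi_<$; I may therefore assume $q$ is supported in a single $<^\varphi$-class and write $M_0$ for the common $<$-class of the $\varphi_<(X^\alpha)$. The same grouping as in the easy direction then shows $\varphi(q) \in R_\bd$ has top $<$-class strictly below $M_0$. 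Applying the sagbi hypothesis, $\initial_< \varphi(q)$ lies in $\bC[\initial_< D_I] = \bC[\varphi_<(X_I)]$, so it can be lifted to a $<$-homogeneous element of $S_\bd$; subtracting its $\varphi$-image from $\varphi(q)$ produces a remainder in $R_\bd$ of strictly lower $<$-class, and one iterates. Termination is ensured because the whole procedure stays within a fixed $\grad$-graded component of $T$, which is finite-dimensional and so contains only finitely many $<$-classes. Summing the accumulated lifts yields $r \in S_\bd$ with $\varphi(r) = -\varphi(q)$ and every monomial $X^\beta$ of $r$ satisfying $\varphi_<(X^\beta) < M_0$, whence $p := q + r$ lies in $I_\bd$ and $\initial_{<^\varphi} p = q$.

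The algebra isomorphism is immediate once the ideal identity is in place: the map $\varphi_<$ induces an injection $S_\bd/\ker \varphi_< \hookrightarrow T$ with image $\bC[\varphi_<(X_I)] = \bC[\initial_< D_I]$, which coincides with $\initial_< R_\bd$ by the sagbi hypothesis. I expect the sagbi-based reduction in the hard inclusion to be the main obstacle: the monomial-level control demanded of $r$ (namely $\varphi_<(X^\beta) < M_0$ for every $X^\beta$ appearing) is precisely what the sagbi property affords, whereas plain algebra generation of $R_\bd$ by the $D_I$ would not suffice to run the lifting.
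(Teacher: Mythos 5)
Your argument is correct, but for the hard inclusion $\ker\varphi_< \subseteq \initial_{<^\varphi} I_\bd$ you take a genuinely different route from the paper. You run a subduction (sagbi-reduction) argument: reduce to $<^\varphi$-homogeneous $q$, observe that the top $<$-class of $\varphi(q)$ drops strictly below $M_0$, lift $\initial_<\varphi(q)$ through the sagbi hypothesis to a $<^\varphi$-homogeneous $r_1\in S_\bd$ with classes below $M_0$, subtract and iterate, with termination secured by finite-dimensionality of the $\grad$-graded components of $T$. The paper instead dispatches the reverse inclusion with a pure dimension count: passing to initial subspaces preserves graded dimensions, so $\dim\initial_{<^\varphi}I_\bd[\la]=\dim I_\bd[\la]=\dim\ker\varphi[\la]$, while the sagbi hypothesis gives $\dim\varphi_<(S_\bd)[\la]=\dim\initial_<R_\bd[\la]=\dim R_\bd[\la]$ and hence $\dim\ker\varphi_<[\la]=\dim\ker\varphi[\la]$; the previously established inclusion of finite-dimensional spaces is then forced to be an equality. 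Your easy inclusion and your derivation of the algebra isomorphism from the ideal identity are the same as the paper's, and the reduction to a single $<^\varphi$-class via the injectivity of the induced map on order-classes is valid. Your subduction proof buys constructivity (it actually exhibits a $p\in I_\bd$ with $\initial_{<^\varphi}p=q$), but it is heavier: at the lifting step one must first tidy up an arbitrary polynomial expression of $\initial_<\varphi(q)$ in the $\initial_< D_{i_1,\dots,i_k}$ so that its summands all sit in the correct $<$-class before reading it back to $S_\bd$, a point your sketch glosses over (the natural lift $P(X_{i_1,\dots,i_k})$ of $P(\initial_< D_{i_1,\dots,i_k})$ need not be $<^\varphi$-homogeneous, and $\initial_<$ need not commute with evaluating $P$). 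The paper's dimension argument avoids this entirely, exploiting the same finiteness of $\grad$-graded components that you invoke only for termination, and is the shorter route here. Both are sound.
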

\begin{proof}
Note that the sagbi basis assumption precisely means that $\varphi_<(S_\bd)=\initial_< R_\bd$ so it suffices to prove the first claim. For $p\in S_\bd$ every monomial appearing in $\varphi_<(\initial_{<^\varphi} p)$ appears in $\varphi(p)$ with the same coefficient. Hence, for $p\in I_\bd$ we must have $\varphi_<(\initial_{<^\varphi} p)=0$ and we obtain $\initial_{<^\varphi} I_\bd\subset\ker\varphi_<$. Now, it is well known that passing to an initial subspace preserves the dimensions of graded components. In particular, for any $\la\in\bZ^{n-1}$ we have $\dim(\initial_< R_\bd[\la])=\dim(R_\bd[\la])$. We deduce \[\dim(\ker{\varphi_<}[\la])=\dim(\ker{\varphi}[\la])=\dim(I_\bd[\la])=\dim(\initial_{<^\varphi} I_\bd[\la]).\] Since these dimensions are finite, the above inclusion cannot be strict.
\end{proof}

One sees that the above proposition and its proof generalize straightforwardly to arbitrary monomial maps between polynomial rings whose kernel is homogeneous with finite dimensional components with respect to some grading.

The geometric motivation for considering initial ideals and subalgebras is that they provide flat degenerations. The following fact is essentially classical and holds for any ideal in a polynomial ring, for a proof in the setting of partial monomial orders see~\cite[Theorem 3.2 and Lemma 3.3]{KNN}.

\begin{theorem}\label{flatfamily}
For any monomial order $\ll$ on $S_\bd$ there exists a flat $\bC[t]$-algebra $\tilde R$ such that $\tilde R/\langle t\rangle\simeq S_\bd/\initial_\ll I_\bd$ while for any nonzero $c\in\bC$ we have $\tilde R/\langle t-c\rangle\simeq S_\bd/I\simeq R_\bd$. 
\end{theorem}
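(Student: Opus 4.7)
The plan is the standard Rees algebra construction for a Gröbner degeneration, adapted to the weak/partial monomial order setting. The first step is to reduce from the (possibly weak, non-total) order $\ll$ to a weight function. By \cite[Theorem 1.2]{KNN} cited earlier, $\ll$ can be recovered from a finite sequence of integer weight vectors compared lexicographically, and the initial ideal $\initial_\ll I_\bd$ does not depend on the lex order of ties once the top weight is fixed on the relevant finite-dimensional graded component. Since $I_\bd$ is $\grad$-homogeneous with finite-dimensional components, it suffices to build the flat family one graded component at a time, and in each component only a single integer weight $w\in\bZ^A$ on the variable set $A$ of $S_\bd$ is needed to cut out the same initial terms as $\ll$.

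The second step constructs the family explicitly. For a polynomial $p=\sum_M c_M M\in S_\bd$ let $N(p)=\max_M \langle w,\deg M\rangle$ be the maximal $w$-weight of a monomial in $p$, and set
\[
\tilde p(t,x) \;=\; t^{N(p)}\, p\bigl(t^{-w_1}x_1,\dots,t^{-w_{|A|}}x_{|A|}\bigr)\in S_\bd[t].
\]
Then $\tilde p|_{t=1}=p$ and $\tilde p|_{t=0}=\initial_\ll p$. Let $\tilde I\subset S_\bd[t]$ be the ideal generated by $\{\tilde p:p\in I_\bd\}$ and put $\tilde R=S_\bd[t]/\tilde I$.

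The third step verifies the fiber identifications. Because $\tilde p|_{t=1}=p$, the specialization $t\mapsto 1$ recovers $I_\bd$, giving $\tilde R/\langle t-1\rangle\simeq R_\bd$. For a nonzero $c\in\bC$ the $\bC^*$-action $x_a\mapsto c^{w_a}x_a$ intertwines the fibers at $t=c$ and $t=1$, yielding $\tilde R/\langle t-c\rangle\simeq R_\bd$. At $t=0$ the generators of $\tilde I$ specialize to $\initial_\ll p$ for $p\in I_\bd$, and one checks that these span $\initial_\ll I_\bd$ as an ideal, giving $\tilde R/\langle t\rangle\simeq S_\bd/\initial_\ll I_\bd$.

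The final step is flatness. Since $\bC[t]$ is a principal ideal domain, flatness of $\tilde R$ is equivalent to absence of $\bC[t]$-torsion. The key point is that $\tilde I$ is saturated with respect to $t$: the construction makes every element of $\tilde I$ homogeneous under the combined grading $\deg'(x_a)=w_a$, $\deg'(t)=1$, so if $t\cdot q\in\tilde I$ for some $q\in S_\bd[t]$, then $q\in\tilde I$ as well. This is where the main subtlety with the weak order lies: one must know that $\initial_\ll I_\bd$ is generated by the specializations $\tilde p|_{t=0}$ and no new relations are introduced by passing to a weak order, which is precisely the content of \cite[Lemma 3.3]{KNN}. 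Modulo that input, the Rees-algebra construction above proceeds exactly as in the classical total-order case, and the torsion-freeness of $\tilde R$ over $\bC[t]$ follows from standard Gröbner-basis bookkeeping on the homogeneous generators.
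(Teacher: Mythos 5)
The paper does not give its own proof of this theorem: it declares the statement ``essentially classical'' and refers the reader directly to Kemper--Ngo--Nguyen, \cite[Theorem 3.2 and Lemma 3.3]{KNN}, for the version valid under partial (weak) monomial orders. Your proposal reconstructs that classical Rees-algebra argument while also invoking \cite{KNN} for the two genuinely delicate points (reduction of the weak order to weight vectors, and the saturation/flatness bookkeeping), so in substance it is the same route, just written out rather than cited.

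One phrasing in your first step is misleading and, read literally, incorrect: you write that ``it suffices to build the flat family one graded component at a time.'' A flat $\bC[t]$-algebra cannot be assembled graded piece by graded piece, because multiplication links components and the Rees/homogenization construction needs a \emph{single} weight vector $w$ that works simultaneously for the whole ideal (equivalently, for a finite Gr\"obner basis of $I_\bd$). What is true, and what you presumably mean, is that finite generation of $I_\bd$ together with the $\grad$-grading with finite-dimensional pieces guarantees that only finitely many monomial comparisons are relevant, so a single integer weight $w$ reproducing $\initial_\ll$ on $I_\bd$ exists; that existence is exactly where \cite{KNN} is needed. With that correction the remaining steps (homogenization $\tilde p$, the ideal $\tilde I$ generated by $\{\tilde p : p\in I_\bd\}$, the $\bC^*$-action identifying all nonzero fibers, and torsion-freeness via $\deg'$-homogeneity of the full homogenization) are the standard argument and are fine, though the final saturation claim deserves one more line: every $\deg'$-homogeneous element of $S_\bd[t]$ has the form $t^m\tilde q$, and dehomogenization sends $\tilde I$ onto $I_\bd$, from which $t\cdot f\in\tilde I \Rightarrow f\in\tilde I$ follows.
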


In geometric terms this means that we have a flat family over $\mathbb A^1$ for which the fiber over 0 is isomorphic to the zero set of $\initial_\ll I_\bd$ in $\bP_\bd$ while all other fibers are isomorphic to $F_\bd$. This flat family is known as a \textit{Gr\"obner degeneration}. Proposition~\ref{idealsubalg} shows that if we choose a monomial order $<$ on $T$ for which the $D_{i_1,\dots,i_k}$ form a sagbi basis and set $\ll=<^\varphi$, then the fiber over 0 can be identified with $\MultiProj\initial_<R_\bd$, this special case is known as a \textit{sagbi degeneration}. The case of a total $<$ is of particular interest, since $\initial_< R_\bd$ is then generated by a finite set of monomials, i.e.\ it is a toric ring and $\initial_{<^\varphi}I_\bd$ is a toric ideal. This means that the fiber over 0 is a toric variety and we have a toric degeneration of the flag variety.

\subsection{Monomial bases in representations}

Consider the Lie algebra $\fg=\fsl_n(\bC)$, choose a Cartan subalgebra $\fh$. Let $\alpha_1,\dots,\alpha_{n-1}\in\fh^*$ denote the simple roots and denote the positive roots by \[\alpha_{i,j}=\alpha_i+\dots+\alpha_{j-1},~1\le i<j\le n.\] The root vector in $\fg$ corresponding to the negative root $-\alpha_{i,j}$ will be denoted $f_{i,j}$. 

For an integral dominant weight $\lambda$ the irreducible representation with highest weight $\la$ will be denoted by $V_\la$, its highest weight vector by $v_\la$. Let $\om_1,\dots,\om_{n-1}$ denote the fundamental weights, we will identify an integral weight \[\la=a_1\om_1+\dots+a_{n-1}\om_{n-1}\] with the tuple $(a_1,\dots,a_{n-1})\in\bZ^{n-1}$. In particular, if such $\la$ is dominant, it is well known that $\dim V_\la=\dim(R[\la])$ where $R=R_{(1,\dots,n-1)}$ is the Pl\"ucker algebra of the complete flag variety. (Moreover, $R[\la]$ is the contragredient dual of $V_\la$ with respect to the natural $\fg$-action on $R$.) 

We also recall that for $1\le k\le n-1$ the fundamental representation $V_{\om_k}$ is isomorphic to the exterior power $\wedge^k\bC^n$ equipped with the natural $\fg$-action. Specifically, let $e_1,\dots,e_n$ denote the basis vectors in $\bC^n$, then $V_{\om_k}$ is spanned by vectors $e_{i_1,\dots,i_k}=e_{i_1}\wedge\dots\wedge e_{i_k}$. The highest weight vector is $v_{\om_k}=e_{1,\dots,k}$ and $f_{i,j}e_{i_1,\dots,i_k}$ is zero unless $i\in\{i_1,\dots,i_k\}$ and $j\notin\{i_1,\dots,i_k\}$, in which case $f_{i,j}e_{i_1,\dots,i_k}=e_{j_1,\dots,j_k}$ where $(j_1,\dots,j_k)$ is obtained from $(i_1,\dots,i_k)$ by replacing $i$ with $j$.

Let $P$ denote the set of pairs $(i,j)$ with $1\le i\le j\le n$. We use the following shorthand: for a vector $c=(c_{i,j})_{(i,j)\in P}\in\bZ_{\ge 0}^P$ we denote \[z^c=\prod_{(i,j)\in P\backslash\{(n,n)\}}z_{i,j}^{c_{i,j}}\in T\quad\text{and}\quad f^c=\prod_{1\le i<j\le n}f_{i,j}^{c_{i,j}}\] where the second product lies in the universal enveloping algebra $\cU(\fg)$ and the factors are ordered by $i$ increasing from left to right and within a given $i$ by $j$ increasing from left to right (or, equivalently in view of commutation relations, first by $j$ and then by $i$). For instance, when $n=4$ one has $f^c=f_{1,2}^{c_{1,2}}f_{1,3}^{c_{1,3}}f_{1,4}^{c_{1,4}}f_{2,3}^{c_{2,3}}f_{2,4}^{c_{2,4}}f_{3,4}^{c_{3,4}}$. We note that these notations will only be used for $c$ with $c_{n,n}=0$.

\begin{definition}
A monomial order $<$ on $T$ is \textit{triangular} if it is total and has the following two properties.
\begin{itemize}
\item The determinants $D_{i_1,\dots,i_k}$ form a sagbi basis of $R$ with respect to $<$.
\item For every $D_{i_1,\dots,i_k}$ there exists a vector $c^{i_1,\dots,i_k}\in\bZ_{\ge 0}^P$ such that $\initial_< D_{i_1,\dots,i_k}=z^{c^{i_1,\dots,i_k}}$. In other words, the initial parts contain only $z_{i,j}$ with $i\le j$. 
\end{itemize}
\end{definition}

One immediately observes that if $<$ is triangular, then the entire initial subalgebra $\initial_< R$ is spanned by monomials of the form $z^c, c\in\bZ_{\ge 0}^P$, i.e.\ containing only $z_{i,j}$ with $i\le j$. 

\begin{theorem}\label{triangularbasis}
Let $<$ be a triangular monomial order on $T$. Then for any integral dominant weight $\la$ the set of vectors $f^cv_\la$ for which $z^c\in\initial_<R[\la]$ is a basis in $V_\la$.
\end{theorem}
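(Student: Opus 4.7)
The approach is to use the contragredient duality $R[\la]\simeq V_\la^*$ noted in the preliminaries to set up a nondegenerate pairing $\langle\cdot,\cdot\rangle:R[\la]\times V_\la\to\bC$, and then to show that a pairing matrix built from the proposed basis is nonsingular via a triangularity argument controlled by $<$. The cardinality check is automatic: since $<$ is total, $\initial_< R[\la]$ has a basis of monomials $\{z^c\}$, and passing to the initial subalgebra preserves graded dimensions, giving $|\{c:z^c\in\initial_< R[\la]\}|=\dim R[\la]=\dim V_\la$. So it suffices to prove linear independence of the vectors $f^c v_\la$.

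For each $c$ with $z^c\in\initial_< R[\la]$ I would choose a lift $g_c\in R[\la]$ with $\initial_<\varphi(g_c)=z^c$ (available since the monomials $z^c$ form a basis of $\initial_<R[\la]$) and form the matrix $A_{c,c'}=\langle g_{c'},f^c v_\la\rangle$. Nonsingularity of $A$ yields linear independence of the $f^c v_\la$ at once. To analyze entries of $A$, I would exploit that the natural $\fg$-action on $T$ (under which the contragredient structure $R[\la]\simeq V_\la^*$ arises) is by derivations, each $f_{i,j}$ acting as a first-order differential operator in the $z$-variables. Moving $f^c$ across the pairing via the contragredient relation $\langle g,f w\rangle=-\langle f g,w\rangle$ converts the computation into applying a differential operator of total order $|c|$ to $\varphi(g_{c'})\in T$ and then pairing the result with $v_\la$, i.e., extracting the ``highest-weight'' component.

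Ordering the index set by $<$ on the monomials $z^c$, the goal is to show $A$ is triangular with nonzero diagonal. The diagonal entry $A_{c,c}$ should reduce to applying the differential operator $f^c$ to the leading monomial $z^c$ of $\varphi(g_c)$ and picking up a nonzero combinatorial constant, while $A_{c,c'}$ with $c\ne c'$ should either vanish or land strictly below the diagonal in the ordering. I expect the main obstacle to be establishing this triangularity: it requires a careful combinatorial analysis of the differential action, hinging on the interplay between the triangular condition (initial monomials supported on $z_{i,j}$ with $i\le j$) and the indexing of root vectors ($f_{i,j}$ for $i<j$). A subtlety is that the lifts $g_c$ are defined only up to lower-order $<$-corrections, but such corrections should contribute only strictly below the diagonal and hence not disrupt nonsingularity. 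As a sanity check, the argument can be refined weight-space by weight-space: the weight of $z^c$ under the induced $\fh$-action on $R$ matches the weight $\la-\sum c_{i,j}\alpha_{i,j}$ of $f^c v_\la$, so character considerations ensure each weight-block of $A$ has the right size to be nonsingular.
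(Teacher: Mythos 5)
Your reduction to linear independence is correct, and the duality-pairing strategy is a genuinely different route from the one the paper takes. But the proof is incomplete at exactly the point where the theorem's content lives. You write that the diagonal entries of $A$ ``should reduce'' to a nonzero constant, that the off-diagonal entries ``should either vanish or land strictly below the diagonal'', and that you ``expect the main obstacle to be establishing this triangularity.'' That triangularity is the theorem; asserting it as an expectation is not a proof.

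Concretely, two points would have to be settled and are not. First, after moving $f^c$ across the contragredient pairing, you obtain an order-$|c|$ differential operator applied to $\varphi(g_{c'})\in T$, evaluated against $v_\la$. Nothing forces this to be governed by the leading term $z^{c'}$: the non-leading monomials of $\varphi(g_{c'})$ also get differentiated, and there is no a priori compatibility between the Gr\"obner order $<$ on $T$-monomials (which is chosen for sagbi reasons, not as a weight order) and the derivations coming from the negative root vectors $f_{i,j}$. You would have to prove that $<$ interacts with these derivations so that only terms $\ge c'$ (in the semigroup order) can contribute, and this is far from obvious. Second, even restricted to the leading term $z^{c'}$ with $c'=c$, you would need to show the resulting scalar is nonzero; since $f^c$ is a PBW monomial with a prescribed factor order and the corresponding derivations do not commute, a cancellation argument is required, not just a ``combinatorial constant'' assertion. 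Your weight-space observation at the end correctly shows each weight block of $A$ is square, but that is the same cardinality count already noted, and it does not bear on nonsingularity.

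For contrast, the paper avoids the pairing entirely. It realizes $V_\la$ as the cyclic submodule $\cU(\fg)u$ of the tensor product $U=V_{\om_1}^{\otimes a_1}\otimes\cdots\otimes V_{\om_{n-1}}^{\otimes a_{n-1}}$, introduces a $\bZ_{\ge 0}^P$-grading $\gr$ on $U$ determined by the exponent vectors of the initial minors, and shows by a direct combinatorial analysis (tracking how each $f_{i,j}$ moves between tensor factors and the basis vectors $e_{i_1,\dots,i_k}$) that $f^c u$ has a nonzero $\gr$-component in degree $c$ and all other components in degrees $>c$. Triangularity is thus established inside $U$ itself, where the action of the $f_{i,j}$ is explicit and the bookkeeping is transparent, rather than pushed through a differential/pairing computation whose triangularity is precisely the thing in doubt. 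To salvage your route you would essentially have to reproduce this analysis in dual form.
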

\begin{proof}
We first check the claim for fundamental representations. Indeed, denote \[\initial_< D_{i_1,\dots,i_k}=z^{c^{i_1,\dots,i_k}},\] then, in view of $z^{c^{i_1,\dots,i_k}}$ being a summand in the determinant, we necessarily have \[f^{c^{i_1,\dots,i_k}}e_{1,\dots,k}\in \{e_{i_1,\dots,i_k},-e_{i_1,\dots,i_k},0\}.\] Hence, we only need to check that $f^{c^{i_1,\dots,i_k}}e_{1,\dots,k}\neq 0$ which is ensured by the chosen order of factors in $f^{c^{i_1,\dots,i_k}}$.

Now, consider a general $\la=(a_1,\dots,a_{n-1})$. First of all, note that since $\dim R[\la]=\dim V_\la$, we only need to prove the linear independence of our set. Consider the product \[U=V_{\om_1}^{\otimes a_1}\otimes\dots\otimes V_{\om_{n-1}}^{\otimes a_{n-1}}\] and denote its highest weight vector \[u=v_{\om_1}^{\otimes a_1}\otimes\dots\otimes v_{\om_{n-1}}^{\otimes a_{n-1}}.\] The cyclic submodule $\cU(\fg)u$ is isomorphic to $V_\la$. A $\bZ_{\ge 0}^P$-grading $\gr$ on $U$ is defined as follows: we first define a grading on every $V_{\om_k}$ by $\gr e_{i_1,\dots,i_k}=c^{i_1,\dots,i_k}$ and then extend it multiplicatively to the tensor product. Note that the order $<$ may be viewed as a semigroup order on $\bZ_{\ge 0}^P$ given by $c_1<c_2$ if $z^{c_1}<z^{c_2}$.

Choose any $z^c\in\initial_< R[\la]$. We claim that $f^c u$ has a nonzero $\gr$-homogeneous component in grading $c$ while all of its other nonzero $\gr$-homogeneous components lie in gradings $c'>c$. This will imply the desired linear independence. Indeed, consider the decomposition of $f^cu$ into a linear combination of tensor products of the $e_{i_1,\dots,i_k}$, suppose this decomposition contains the vector \[u'=\bigotimes_{k=1}^{n-1} e_{i_1^{k,1},\dots,i_k^{k,1}}\otimes\dots\otimes e_{i_1^{k,a_k},\dots,i_k^{k,a_k}}\] with a nonzero coefficient. This means that we have vectors $b^{k,r}\in\bZ_{\ge 0}^P$ with $1\le k\le n-1$, $1\le r\le a_k$ such that $f^{b^{k,r}}e_{1,\dots,k}=\pm e_{i_1^{k,r},\dots,i_k^{k,r}}$ and $\sum_{k,r} b^{k,r}=c$. Note that the coordinates $b^{k,r}_{i,i}$ can be chosen arbitrarily as long as they sum up to $c_{i,i}$. Let us show that we may choose the $b^{k,r}$ so that $z^{b^{k,r}}$ is a monomial appearing in the determinant $D_{i_1^{k,r},\dots,i_k^{k,r}}$, this already implies $\gr e_{i_1^{k,r},\dots,i_k^{k,r}}\ge b^{k,r}$ and, consequently, $\gr u'\ge c$. The fact that $f^{b^{k,r}}e_{1,\dots,k}\neq 0$ implies that for every $i\in[1,k]$ we can have $b^{k,r}_{i,j}=1$ for at most one pair $i<j$ while all other such $b^{k,r}_{i,j}=0$. Choosing the $b^{k,r}_{i,i}$ accordingly we may assume that for every $i\in[1,k]$ exactly one coordinate $b^{k,r}_{i,j}=1$, note that this choice satisfies $\sum_{k,r} b^{k,r}=c$.
From $f^{b^{k,r}}e_{1,\dots,k}=\pm e_{i_1^{k,r},\dots,i_k^{k,r}}$ we now deduce that $z^{b^{k,r}}$ is indeed a monomial appearing in $D_{i_1^{k,r},\dots,i_k^{k,r}}$.

It remains to show that $f^c u$ has a nonzero component in grading $c$. The fact that $z^c\in\initial_< R[\la]$ implies that it is a product of the $z^{c^{i_1,\dots,i_k}}$. Specifically, we have tuples $(j_1^{k,r},\dots,j_k^{k,r})$ with $1\le k\le n-1$, $1\le r\le a_k$ such that $\sum_{k,r} c^{j_1^{k,r},\dots,j_k^{k,r}}=c$. This means that the decomposition of $f^c u$ contains the following vector with a nonzero coefficient: \[\bigotimes_{k=1}^{n-1} e_{j_1^{k,1},\dots,j_k^{k,1}}\otimes\dots\otimes e_{j_1^{k,a_k},\dots,j_k^{k,a_k}}.\qedhere\]
\end{proof}

Bases in irreducible representations of this form are known as \textit{monomial bases} because every element is obtained from the highest weight vector by the action of a PBW monomial. Perhaps, the best known basis of this kind is the FFLV basis constructed in~\cite{FFL1}, see also Example~\ref{monbasisex}.

\subsection{Marked chain-order polytopes}

We equip the set $P$ with a partial order $\prec$ such that $(i,j)\preceq(i',j')$ if and only if $i\le i'$ and $j\le j'$. The poset $(P,\prec)$ is sometimes referred to as the \textit{Gelfand--Tsetlin poset}. For $n=4$ the Hasse diagram of this poset looks as follows (with the minimal element on the left and the maximal on the right):
\begin{equation}
\begin{tikzcd}[row sep=tiny,column sep=tiny]\label{hasse}
(1,1)\arrow[rd]&&(2,2)\arrow[rd]&&(3,3)\arrow[rd]&&(4,4)\\
&(1,2)\arrow[rd]\arrow[ru]&&(2,3)\arrow[rd]\arrow[ru]&&(3,4)\arrow[ru]\\
&&(1,3)\arrow[rd]\arrow[ru]&&(2,4)\arrow[ru]\\
&&&(1,4)\arrow[ru]
\end{tikzcd}
\end{equation}

We will now define a family of polytopes associated with this poset. Denote $A=\{(1,1),(2,2),\dots,(n,n)\}$ and for an integral dominant $\fg$-weight $\la=(a_1,\dots,a_{n-1})$ and $1\le i\le n$ denote $\la(i)=a_i+\dots+a_{n-1}$ (so $\la(n)=0$). What follows is a specialization of the definition originally given in~\cite[Definition 1.5]{FF} for arbitrary posets to the case of the poset $(P,\prec)$.
\begin{definition}\label{mcophdef}
Consider an integral dominant $\fg$-weight $\la$ and a partition $O\sqcup C=P\backslash A$. The \textit{marked chain-order polytope} (\textit{MCOP}) $\cO_{O,C}(\la)\subset\bR^P$ consists of points $x$ such that:
\begin{itemize}
\item $x_{i,i}=\la(i)$ for all $1\le i\le n$,
\item $x_{i,j}\ge 0$ for all $(i,j)\in C$,
\item for every chain $(p,q)\prec(i_1,j_1)\prec\dots\prec(i_m,j_m)\prec(r,s)$ with $(p,q),(r,s)\in O\sqcup A$ and all $(i_l,j_l)\in C$ one has \[x_{i_1,j_1}+\dots+x_{i_m,j_m}\le x_{p,q}-x_{r,s}.\]
\end{itemize}
\end{definition}

In the notations of~\cite{FF} this is the polytope $\mathcal{CO}_{O,C}((P,\succ),A,(\la(1),\dots,\la(n)))$. In particular, there it is associated with the dual poset $(P,\succ)$ rather than $(P,\prec)$ but the above form will be more convenient to us. It is worth mentioning that~\cite{FF} imposes a certain restriction on the partition $O\sqcup C$ but the same definition is considered in~\cite[Proposition 1.3]{FFLP} without such restrictions.

One can already note that when $C=\varnothing$ the resulting polytope consists of points with $x_{i,i}=\la(i)$ and $x_{i,j}\ge x_{i',j'}$ whenever $(i,j)\preceq(i',j')$. This is the famous Gelfand--Tsetlin polytope of~\cite{GT}. When $O=\varnothing$ the polytope consists of points with $x_{i,i}=\la(i)$, all $x_{i,j}\ge 0$ and the sum of coordinates over any chain in $C$ starting in $(i,i+1)$ and ending in $(j,j+1)$ being no greater than $\la(i)-\la(j+1)$. This is the FFLV polytope of~\cite{FFL1}. These two polytopes are also known as the corresponding \textit{marked order polytope} and \textit{marked chain polytope}, a terminology due to~\cite{ABS}. For this reason $O$ is known as the ``order part'' and $C$ as the ``chain part''. The other $\cO_{O,C}(\la)$ are seen to interpolate between these two cases.

We will, however, mostly work with an alternative definition which is shown to be equivalent in~\cite[Subsection 3.5]{FM}. The approach here is to explicitly describe the lattice point sets of the fundamental polytopes $\cO_{O,C}(\om_k)$ and then define all other $\cO_{O,C}(\la)$ as Minkowski sums of these. Let $\cJ$ denote the set of order ideals (lower sets) in $(P,\prec)$.
\begin{definition}\label{mcopdef}
Consider a partition $O\sqcup C=P\backslash A$. For $J\in\cJ$ let $x_{O,C}(J)\in\bR^P$ denote the indicator vector $\mathbf 1_{M_{O,C}(J)}$ of the set \[M_{O,C}(J)=(J\cap (O\cup A))\cup\max\nolimits_{\prec}(J)\] ($\max_\prec$ denotes the subset of $\prec$-maximal elements). The MCOP $\cO_{O,C}(\om_k)$ is the convex hull of those $x_{O,C}(J)$ for which $|J\cap A|=k$, i.e.\ $J$ contains $(k,k)$ but not $(k+1,k+1)$. For $\la=(a_1,\dots,a_{n-1})$ the MCOP $\cO_{O,C}(\la)$ is the Minkowski sum \[a_1\cO_{O,C}(\om_1)+\dots+a_{n-1}\cO_{O,C}(\om_{n-1}).\]
\end{definition}

In other words, $M_{O,C}(J)$ contains all elements of $J$ contained in $O\cup A$ and maximal elements of $J$ contained $C$, e.g.\ $M_{P\backslash A,\varnothing}(J)=J$ and $M_{\varnothing,P\backslash A}(J)=(J\cap A)\cup \max_\prec J$. Also, for $n=4$ and $J$ generated by $(1,4)$ and $(2,3)$ the set $M_{O,C}(J)$ consists of the black elements in~\eqref{pipedreamfig} whenever $O$ contains $(1,2)$ but not $(1,3)$.

A key combinatorial property of MCOPs which motivated their introduction is as follows.
\begin{proposition}[{\cite[Corollary 2.5]{FFLP}}]\label{ehrhart}
For a given $\la$ all polytopes $\cO_{O,C}(\la)$ are pairwise \textit{Ehrhart equivalent}, meaning that for every $m\in\bZ_{\ge0}$ the number of integer points in the dilation $m\cO_{O,C}(\la)$ does not depend on $O$ and $C$.
\end{proposition}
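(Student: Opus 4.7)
The plan is to construct, for any two partitions $O\sqcup C$ and $O'\sqcup C'$ of $P\setminus A$, a piecewise-linear bijection $\Phi\colon \cO_{O,C}(\la)\to\cO_{O',C'}(\la)$ whose restrictions to the pieces of a polytopal subdivision of the domain are affine with integer coefficients, and whose inverse has the same property. Such a $\Phi$ scales naturally with the marked values (replacing each $\la(i)$ by $m\la(i)$ amounts to dilating by $m$), so it restricts, for every $m\in\bZ_{\ge 0}$, to a bijection between the integer points of $m\cO_{O,C}(\la)$ and those of $m\cO_{O',C'}(\la)$, which is exactly what Ehrhart equivalence requires.

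First I would reduce to an atomic swap. Any two partitions $O\sqcup C$ and $O'\sqcup C'$ are connected by a sequence of partitions differing consecutively in a single element of $P\setminus A$, and a composition of piecewise-linear integer-affine bijections is again of the same kind. So it suffices to treat the case where $O\sqcup C$ and $O'\sqcup C'$ agree away from one element $p\in P\setminus A$, with $p\in C$ and $p\in O'$. The atomic $\Phi$ would fix all $x_q$ for $q\neq p$ and replace the $p$-coordinate by an explicit piecewise-linear expression, following the transfer principle of~\cite{St} as adapted to the marked and mixed chain-order setting in~\cite{ABS,FF,FFLP,FM}. Concretely, $\Phi(x)_p$ would be the maximum, over chains in $C$ starting at $p$ and terminating at some element $(r,s)\in O\cup A$ covering the chain from above, of the corresponding chain sum in the coordinates of $x$ together with $x_{r,s}$; the inverse replaces this maximum by a dual minimum-based expression.

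Second, I would verify that $\Phi(x)\in\cO_{O',C'}(\la)$ and that $\Phi$ and its candidate inverse $\Psi$ are mutually inverse, by a systematic check of all chain inequalities from Definition~\ref{mcophdef}. Each inequality involving $p$ in the target polytope either coincides with the defining equation for $\Phi(x)_p$ (the chain saturating the maximum) or weakens to an inequality already satisfied in the source polytope (chains through $p$ other than the maximizing one). The main obstacle is the bookkeeping in this case analysis: different maximal chains through $p$ correspond to different affine pieces of $\Phi$, and one must check that these pieces agree along the common faces of the underlying polyhedral subdivision of $\cO_{O,C}(\la)$. Once this compatibility is established, the integer-affine character of every piece yields the lattice-point-preserving property on all integer dilations, and hence the proposition.
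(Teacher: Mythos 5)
The paper does not prove this proposition; it simply cites \cite[Corollary 2.5]{FFLP}, where the Ehrhart equivalence is established by a transfer-type piecewise-linear bijection. Your proposal reconstructs exactly that argument, decomposed into atomic swaps of a single element of $P\setminus A$ between $C$ and $O$. The sketch is essentially sound: the atomic map $\Phi(x)_p = x_p + \max\bigl(x_{c_1}+\dots+x_{c_m}+x_{r,s}\bigr)$, the maximum running over chains $p\prec c_1\prec\dots\prec c_m\prec(r,s)$ with all $c_l\in C\setminus\{p\}$ and $(r,s)\in O\cup A$, does land in $\cO_{O',C'}(\la)$ and has the evident inverse $\Psi(y)_p=y_p-\max(\dots)$; both directions follow by splitting each source chain inequality through $p$ at $p$ and comparing with the maximizing chain. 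Since all pieces are homogeneous integer-linear (no constant term — the markings are themselves coordinates), the map commutes with dilation and preserves lattice points, yielding the bijection on $m\cO_{O,C}(\la)\cap\bZ^P$ for every $m$.

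Two small corrections. First, the worry in your second paragraph about checking that the affine pieces ``agree along common faces'' is unnecessary: a maximum of finitely many linear forms is automatically continuous, so the piecewise description is globally consistent by construction; the real work is the verification that $\Phi(x)$ satisfies all chain inequalities in the target and that $\Psi(y)_p\ge 0$, which you correctly identify as a case analysis on chains through $p$. Second, the phrase ``affine with integer coefficients'' should read ``linear (homogeneous) with integer coefficients'': homogeneity is what you actually use when you assert that replacing $\la$ by $m\la$ amounts to dilating by $m$, and it is also what guarantees that both $\Phi$ and $\Psi$ send integer vectors to integer vectors. With these tweaks the proposal gives a correct, self-contained proof along the same lines as the cited source; it is more explicit than the paper's treatment (a bare citation) but does not take a genuinely different route.
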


\begin{cor}\label{intpoints}
For any $O$, $C$ and $\la$ we have \[|\cO_{O,C}(\la)\cap\bZ^P|=\dim V_\la.\]
\end{cor}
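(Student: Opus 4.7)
The plan is to invoke Proposition~\ref{ehrhart} to reduce the statement to a single well-understood case. Specifically, taking $m=1$ in Proposition~\ref{ehrhart} shows that the quantity $|\cO_{O,C}(\la)\cap\bZ^P|$ is independent of the partition $O\sqcup C=P\backslash A$, so it suffices to compute it for one convenient choice.

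The natural choice is $C=\varnothing$, $O=P\backslash A$, for which $\cO_{O,\varnothing}(\la)$ is (as noted in the discussion following Definition~\ref{mcophdef}) the Gelfand--Tsetlin polytope attached to $\la$. Thus the claim reduces to the classical fact of~\cite{GT} that the number of integer points of the Gelfand--Tsetlin polytope of highest weight $\la$ equals $\dim V_\la$. Equivalently, one could specialize to $O=\varnothing$ and invoke the analogous result of~\cite{FFL1} for the FFLV polytope; either reference closes the argument.

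There is no real obstacle here: both ingredients (Ehrhart equivalence and the Gelfand--Tsetlin or FFLV integer-point count) are already available, and combining them gives the corollary in one line.
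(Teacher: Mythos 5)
Your proof is correct and takes exactly the same route as the paper's: apply Proposition~\ref{ehrhart} with $m=1$ to reduce to a single partition, then cite~\cite{GT} (or~\cite{FFL1}) for the fact that the Gelfand--Tsetlin (resp.\ FFLV) polytope has $\dim V_\la$ lattice points. No differences worth noting.
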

\begin{proof}
By Proposition~\ref{ehrhart} for a given $\la$ it suffices to prove the claim for one chosen partition $O\sqcup C$. However, in the original works~\cite{GT} and~\cite{FFL1} the key property proved for, respectively, the Gelfand--Tsetlin polytope $\cO_{P\backslash A,\varnothing}(\la)$ and the FFLV polytope $\cO_{\varnothing,P\backslash A}(\la)$ is that the polytope's integer points parametrize a basis in $V_\la$. In particular, the number of integer points in either polytope is $\dim V_\la$.
\end{proof}

We will also use the following \textit{Minkowski sum property} of MCOPs.
\begin{proposition}[{\cite[Theorem 2.8]{FFP}}]\label{minkowski}
For integral dominant $\la$ and $\mu$ every integer point in $\cO_{O,C}(\la+\mu)$ decomposes into a sum of two integer points in $\cO_{O,C}(\la)$ and $\cO_{O,C}(\mu)$:
\[\cO_{O,C}(\la+\mu)\cap\bZ^P=(\cO_{O,C}(\la)\cap\bZ^P)+(\cO_{O,C}(\mu)\cap\bZ^P).\]
\end{proposition}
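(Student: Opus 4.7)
The inclusion $\supseteq$ is immediate: by Definition~\ref{mcopdef} we have $\cO_{O,C}(\la+\mu)=\cO_{O,C}(\la)+\cO_{O,C}(\mu)$ as a Minkowski sum of polytopes, so the sum of any pair of integer points from the summands is an integer point of $\cO_{O,C}(\la+\mu)$. The content of the proposition is thus the reverse inclusion: every integer point of $\cO_{O,C}(\la+\mu)$ must be shown to decompose.

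My plan is to reduce to the following peeling lemma: for any dominant $\la$ and any $k$ with $a_k>0$, every $z\in\cO_{O,C}(\la)\cap\bZ^P$ admits a decomposition $z=x_{O,C}(J)+z'$ with $z'\in\cO_{O,C}(\la-\om_k)\cap\bZ^P$ and $J\in\cJ$ satisfying $|J\cap A|=k$. Iterating the peeling lemma and inducting on $|\la|+|\mu|$ writes any integer point of $\cO_{O,C}(\la+\mu)$ as a sum of $|\la|+|\mu|$ many $x_{O,C}(J_r)$; partitioning this multiset so that the $r$'s with $|J_r\cap A|=i$ split into groups of sizes $a_i$ and $b_i$ then furnishes the desired $z=x+y$.

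To set up the peeling I would first verify that the $x_{O,C}(J)$ with $|J\cap A|=k$ exhaust the integer points of $\cO_{O,C}(\om_k)$. Distinctness of the $x_{O,C}(J)$ holds because an order ideal is determined by its maximal antichain. The count of such order ideals equals $\binom{n}{k}$ via the parametrization $J\leftrightarrow(M(1),\dots,M(k))$ where $M(i)=\max\{j:(i,j)\in J\}$ ranges over non-increasing sequences with $n\ge M(1)\ge\cdots\ge M(k)\ge k$. This matches $|\cO_{O,C}(\om_k)\cap\bZ^P|=\dim V_{\om_k}=\binom{n}{k}$ from Corollary~\ref{intpoints}, so the inclusion of the $x_{O,C}(J)$ into the integer point set is an equality by cardinality.

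The peeling lemma is the combinatorial heart. Given $z$, I would construct $J$ greedily by walking through the rows $i=1,\dots,k$ and choosing $M(i)$ so that subtracting the indicator of $M_{O,C}(J)$ from $z$ preserves the chain inequalities of Definition~\ref{mcophdef} for the weight $\la-\om_k$. The main obstacle will be showing that this greedy procedure yields a weakly decreasing $M$ and that the resulting $z'$ actually lies in $\cO_{O,C}(\la-\om_k)$; I expect to handle this via an exchange argument that rectifies any local violation by shifting $M(i)$ along a tight chain. The two flavours of inequality in Definition~\ref{mcophdef} must be verified separately: in the order-type case (both endpoints in $O\cup A$ with empty interior), the decrement matches a single pointwise constraint; in the genuine chain case one exploits that each chain with interior in $C$ meets $M_{O,C}(J)$ in at most one element (any two chain elements are $\prec$-comparable, so at most one can be $\prec$-maximal in $J$), hence the left-hand sum decreases by at most one. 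This unit drop is then coordinated, through the choice of $J$, with the corresponding unit drop in the right-hand side $x_{p,q}-x_{r,s}$ which occurs precisely when $(p,q)\in M_{O,C}(J)$ and $(r,s)\notin M_{O,C}(J)$; the diagonal constraints $x_{i,i}=\la(i)$ take care of themselves since $M_{O,C}(J)\cap A=\{(1,1),\dots,(k,k)\}$.
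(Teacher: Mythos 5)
The paper does not prove this proposition; it cites it directly as \cite[Theorem 2.8]{FFP}, so there is no in-paper argument to compare against. Evaluating your plan on its own terms: the skeleton is sound. The inclusion $\supseteq$ does follow immediately from Definition~\ref{mcopdef}, the reduction of $\subseteq$ to a ``peeling lemma'' is the standard strategy, the iterate-and-repartition bookkeeping is correct (after peeling $z$ into $\sum_r x_{O,C}(J_r)$ with the right multiplicities of $|J_r\cap A|$, splitting the multiset gives lattice points of $\cO_{O,C}(\la)$ and $\cO_{O,C}(\mu)$ by definition), and your identification of $\cO_{O,C}(\om_k)\cap\bZ^P$ with $\{x_{O,C}(J): J\in\cJ_k\}$ via Corollary~\ref{intpoints} is exactly the observation the paper itself makes just after this proposition.

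The genuine gap is that the peeling lemma is not proven, and it is the entire nontrivial content of the cited [FFP, Theorem~2.8]. You state that you would ``construct $J$ greedily by walking through the rows'' and then ``handle this via an exchange argument that rectifies any local violation by shifting $M(i)$ along a tight chain,'' but no such argument is given. For a general partition $O\sqcup C$ this step is delicate: a single element of $M_{O,C}(J)$ can participate in many of the chain inequalities of Definition~\ref{mcophdef} simultaneously (as an endpoint in $O\cup A$ in some and as a maximal $C$-element in others), and one must verify the drop comparison on all of them at once, while also keeping $z'_{i,j}\ge 0$ on $C$ and ensuring that the chosen $M_{O,C}(J)$ actually comes from a single order ideal $J$ with $|J\cap A|=k$. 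Your observation that the left-hand side of any chain inequality drops by at most one is correct (all interior elements are comparable, so at most one is $\prec$-maximal in $J$), but the required coordination with the right-hand side drop and with non-negativity is exactly where the difficulty lies, and your proposal leaves it as a declaration of intent. As it stands, the proposal identifies the right decomposition but does not prove the key lemma.
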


We now proceed to discuss the toric varieties of MCOPs and their multiprojective realizations. We will denote the toric variety of $\cO_{O,C}(\la)$ by $H_{O,C}(\la)$ and call it a \textit{generalized Hibi variety} (see below). For a weight $\la=(a_1,\dots,a_{n-1})$ let its \textit{signature} be the tuple of those $i$ for which $a_i\neq 0$. We have the following simple observation.
\begin{proposition}\label{strcombequiv}
If integral dominant weights $\la$ and $\mu$ have the same signature, then for any $O,C$ the polytopes $\cO_{O,C}(\la)$ and $\cO_{O,C}(\mu)$ are strongly combinatorially equivalent (have the same normal fan). In particular, $H_{O,C}(\la)$ and $H_{O,C}(\mu)$ are isomorphic. 
\end{proposition}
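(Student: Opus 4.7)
My approach is to use the Minkowski-sum description of MCOPs from Definition~\ref{mcopdef}, which trivializes the normal-fan comparison. Writing $\la=(a_1,\dots,a_{n-1})$, we have $\cO_{O,C}(\la)=\sum_{i=1}^{n-1} a_i\cO_{O,C}(\om_i)$. The summands with $a_i=0$ contribute only the point $\{0\}$ and do not affect the Minkowski sum, so the decomposition reduces to $\cO_{O,C}(\la)=\sum_{i\in S} a_i\cO_{O,C}(\om_i)$ where $S$ is the signature of $\la$. Since $\mu$ has the same signature $S$, its decomposition involves exactly the same collection of fundamental polytopes, with different (but still strictly positive integer) coefficients.

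The key input is the classical fact that the normal fan of a Minkowski sum of polytopes with strictly positive coefficients equals the common refinement of the normal fans of the summands, and in particular does not depend on the specific positive values of the coefficients (see e.g.\ Ziegler, \emph{Lectures on Polytopes}, Proposition~7.12). Applying this to both $\cO_{O,C}(\la)$ and $\cO_{O,C}(\mu)$, their normal fans both coincide with the common refinement of the normal fans of $\cO_{O,C}(\om_i)$ for $i\in S$. This is precisely strong combinatorial equivalence.

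The isomorphism $H_{O,C}(\la)\simeq H_{O,C}(\mu)$ then follows from the general principle that the projective toric variety attached to a lattice polytope depends, as an abstract variety, only on the normal fan; two polytopes with the same normal fan give rise to the same toric variety equipped with two (possibly different) ample polarizations. I do not expect any serious obstacle in this argument: every step is a direct invocation of standard polytope and toric theory. The only minor point worth checking is that the zero-coefficient summands may be dropped without altering the common refinement, which is immediate since $\{0\}$ has the trivial normal fan consisting of a single maximal cone equal to all of $\bR^P$.
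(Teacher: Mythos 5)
Your argument is correct and is exactly the paper's proof, just written out in more detail: the paper's one-line justification is precisely "this follows from Definition~\ref{mcopdef}, since the normal fan of a Minkowski sum is the common refinement of the summands' normal fans," which is the fact you invoke. Your careful handling of the zero-coefficient summands (dropping them since $\{0\}$ contributes the trivial fan) and the reduction to the same index set $S$ are the routine details the paper leaves implicit.
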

\begin{proof}
This follows directly from Definition~\ref{mcopdef}, since the normal fan of a Minkowski sum is the common refinement of the summands' normal fans.
\end{proof}

For any subset $\cL\subset\cJ$ we will write $\bC[\cL]$ to denote the ring of polynomials in variables $X_J, J\in\cL$. Let $\cJ_k\subset\cJ$ denote the set of $J$ with $|J\cap A|=k$. The ring $\bC[\cJ_k]$ is the homogeneous coordinate ring of the space $\bP(\bC^{\cJ_k})$. Proposition~\ref{minkowski} implies that every MCOP is \textit{normal}, i.e.\ every integer point in the dilation $m\cO_{O,C}(\la)$ is the sum of $m$ integer points in $\cO_{O,C}(\la)$. An implication of Corollary~\ref{intpoints} is that $\cO_{O,C}(\om_k)$ contains no integer points other than the $x_{O,C}(J)$ with $J\in\cJ_k$. That is since $\dim V_{\om_k}={n\choose k}$ and it is also easily seen that $|\cJ_k|={n\choose k}$ and the $x_{O,C}(J)$ are pairwise distinct. By standard properties of toric varieties (see~\cite[\S2.3]{CLS}) these two facts provide a projective embedding of the toric variety. Consider the map $\varphi_{O,C}:\bC[\cJ]\to T$ given by \[\varphi_{O,C}(X_J)=z^{x_{O,C}(J)}.\]
\begin{proposition}
$H_{O,C}(\om_k)$ is realized in $\bP(\bC^{\cJ_k})$ as the zero set of $\ker(\varphi_{O,C}|_{\bC[\cJ_k]})$.
\end{proposition}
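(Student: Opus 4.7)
The plan is to derive the statement directly from the standard construction of a projective toric variety from a normal lattice polytope, as in~\cite[\S2.3]{CLS}: given such a polytope $P$ with lattice points $v_1,\dots,v_N$, the toric variety is the zero set in $\bP^{N-1}$ of the kernel of the monomial map $X_i\mapsto z^{v_i}$. The proposition will follow once we identify this situation with our map $\varphi_{O,C}|_{\bC[\cJ_k]}$.

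First, I would verify that the lattice points of $\cO_{O,C}(\om_k)$ are exactly $\{x_{O,C}(J):J\in\cJ_k\}$. By Definition~\ref{mcopdef} these points lie in $\cO_{O,C}(\om_k)$, and they are pairwise distinct because $J\in\cJ_k$ can be reconstructed from $M_{O,C}(J)$ (as the order ideal it generates, which is $J$ itself). Corollary~\ref{intpoints} gives $|\cO_{O,C}(\om_k)\cap\bZ^P|=\dim V_{\om_k}=\binom{n}{k}=|\cJ_k|$, so no other integer points exist. This matches the variables $X_J$ of $\bC[\cJ_k]$ to lattice points of $\cO_{O,C}(\om_k)$ in the expected way.

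Second, I would invoke normality: by Proposition~\ref{minkowski} applied inductively to $\la=\mu=\om_k$, every integer point of $m\cO_{O,C}(\om_k)$ is a sum of $m$ integer points of $\cO_{O,C}(\om_k)$. Consequently the semigroup generated by $\{(x_{O,C}(J),1):J\in\cJ_k\}\subset\bZ^P\oplus\bZ$ coincides with the saturated semigroup of all lattice points in the cone over the polytope. By the standard theory, this means that the projective embedding of $H_{O,C}(\om_k)$ determined by $\cO_{O,C}(\om_k)$ is given precisely by the monomial map $X_J\mapsto z^{x_{O,C}(J)}$, and its defining (toric) ideal is the kernel of this map.

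The only subtlety to check is that although $\varphi_{O,C}$ formally lands in the polynomial ring $T$ rather than a Laurent ring, all exponents $x_{O,C}(J)$ have nonnegative entries supported on $P$, so the image monomials are honest polynomials and the kernel is unchanged. I do not anticipate any serious obstacle here: the role of the proposition is mainly to package Corollary~\ref{intpoints}, Proposition~\ref{minkowski}, and the bijection $J\leftrightarrow x_{O,C}(J)$ into the projective realization that will be used in subsequent sections.
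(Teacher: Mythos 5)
Your proposal is correct and matches the paper's own reasoning: the paper proves this proposition in the paragraph immediately preceding it, by observing (via Proposition~\ref{minkowski}) that $\cO_{O,C}(\om_k)$ is normal and (via Corollary~\ref{intpoints} together with $|\cJ_k|=\binom{n}{k}$ and pairwise distinctness of the $x_{O,C}(J)$) that its lattice points are exactly $\{x_{O,C}(J):J\in\cJ_k\}$, then invoking~\cite[\S2.3]{CLS}. Your added justification that $J$ is recoverable from $M_{O,C}(J)$ as the order ideal it generates is a correct way to establish the pairwise distinctness the paper leaves as ``easily seen.''
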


Consider an integral dominant weight $\la$ of signature $\bd=(d_1,\dots,d_l)$ and denote $\cJ_\bd=\cJ_{d_1}\cup\dots\cup\cJ_{d_l}$. The ring $\bC[\cJ_\bd]$ is the multihomogeneous coordinate ring of the product \[\bP^{O,C}_\bd=\bP(\bC^{\cJ_{d_1}})\times\dots\times\bP(\bC^{\cJ_{d_l}}).\] By Proposition~\ref{strcombequiv} the variety $H_{O,C}(\la)$ is isomorphic to $H_{O,C}(\om_{d_1}+\dots+\om_{d_l})$. The general multiprojective realization of the toric variety of a Minkowski sum of normal polytopes (see, for instance,~\cite[Lemma 1.8.3]{FM}) provides the following theorem which is a special case of~\cite[Theorem 3.2.3]{FM} for the more general marked relative poset polytopes.
\begin{theorem}\label{hibiproj}
$H_{O,C}(\la)$ is realized in $\bP^{O,C}_\bd$ as the zero set of $I^{O,C}_\bd=\ker(\varphi_{O,C}|_{\bC[\cJ_\bd]})$.
\end{theorem}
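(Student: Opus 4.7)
The plan is to reduce this to a standard fact about multiprojective embeddings of toric varieties of Minkowski sums of normal lattice polytopes. First, by Proposition~\ref{strcombequiv}, $\cO_{O,C}(\la)$ and $\cO_{O,C}(\om_{d_1}+\dots+\om_{d_l})$ have the same normal fan, so their toric varieties are naturally identified and the multiprojective ambient space $\bP^{O,C}_\bd$ is the same. I may therefore assume $\la = \om_{d_1}+\dots+\om_{d_l}$, under which Definition~\ref{mcopdef} gives the Minkowski decomposition $\cO_{O,C}(\la) = \cO_{O,C}(\om_{d_1}) + \dots + \cO_{O,C}(\om_{d_l})$ into fundamental lattice polytopes.

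Next, I would verify that each $\cO_{O,C}(\om_{d_i})$ is a normal lattice polytope. Proposition~\ref{minkowski} applied repeatedly with $\la=\mu=\om_{d_i}$ yields, by induction on $m$, that every integer point of $m\cO_{O,C}(\om_{d_i})$ is a sum of $m$ integer points of $\cO_{O,C}(\om_{d_i})$, which is exactly the normality condition. Combined with the observation immediately preceding the theorem, namely that $\cO_{O,C}(\om_{d_i})\cap\bZ^P = \{x_{O,C}(J) : J\in\cJ_{d_i}\}$ with these points all distinct and of cardinality $\binom{n}{d_i}=\dim V_{\om_{d_i}}$, this identifies the vertex (or more precisely lattice-point) set with the index set of the Pl\"ucker-style coordinates $X_J$ on the $i$-th factor of $\bP^{O,C}_\bd$.

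Given normality of each summand, the standard formalism for toric varieties of Minkowski sums of normal polytopes, cited in the excerpt as~\cite[Lemma 1.8.3]{FM} and underlying~\cite[Theorem 3.2.3]{FM}, identifies $H_{O,C}(\la)$ embedded multiprojectively in $\bP^{O,C}_\bd$ with the $\MultiProj$ of the multigraded subring of $T$ generated by the monomials $z^{x_{O,C}(J)}$ for $J \in \cJ_\bd$. The map $\varphi_{O,C}$ restricted to $\bC[\cJ_\bd]$ surjects onto this subring and is multihomogeneous, where $X_J$ with $J\in\cJ_{d_i}$ is placed in degree $\varepsilon_i$ and the grading of $z^{x_{O,C}(J)}$ in $T$ matches because $|M_{O,C}(J)\cap\{(a,b):a=i\}|$ is an affine function of $i$ determined by $|J\cap A|=d_i$. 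Hence the defining multihomogeneous ideal of $H_{O,C}(\la)$ in $\bP^{O,C}_\bd$ is $\ker(\varphi_{O,C}|_{\bC[\cJ_\bd]}) = I^{O,C}_\bd$.

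The only substantive thing to check by hand is normality of the fundamental polytopes and the bijection $\cJ_{d_i}\leftrightarrow\cO_{O,C}(\om_{d_i})\cap\bZ^P$; everything else is a direct invocation of the standard projective embedding theory for toric varieties. I expect the main conceptual obstacle, were one to write this out in full rather than cite~\cite{FM}, to be the Minkowski-sum step, namely verifying that the multigraded Veronese of a product of projectively normal toric varieties coincides with the subalgebra generated by the lattice points of the Minkowski sum; but this is exactly the content of the cited lemma and requires no new combinatorics beyond Proposition~\ref{minkowski} and Corollary~\ref{intpoints}.
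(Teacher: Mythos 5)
Your approach matches the paper's essentially exactly: the paper itself gives no explicit proof, instead reducing to the case $\la=\om_{d_1}+\dots+\om_{d_l}$ via Proposition~\ref{strcombequiv}, observing normality (Proposition~\ref{minkowski}) and the lattice-point bijection (Corollary~\ref{intpoints}), and then citing~\cite[Lemma 1.8.3]{FM} (and \cite[Theorem 3.2.3]{FM}) for the standard multiprojective realization of the toric variety of a Minkowski sum of normal polytopes. Your write-up fills in the hypothesis-checking slightly more explicitly, which is fine. One small inaccuracy worth flagging: the remark that ``the grading of $z^{x_{O,C}(J)}$ in $T$ matches because $|M_{O,C}(J)\cap\{(a,b):a=i\}|$ is an affine function of $i$'' is not quite right as stated. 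The monomial $z^{x_{O,C}(J)}$ is \emph{not} in general $\grad$-homogeneous of degree $\om_{|J\cap A|}$ with respect to the $\bZ^{n-1}$-grading on $T$; it is only the transformed monomial $\theta(z^{x_{O,C}(J)})$ that has grading $\om_{|J\cap A|}$ (this is what Proposition~\ref{thetaJ} gives and what is used in the proof of Proposition~\ref{thetasagbi}). Fortunately this does not affect your argument: the multihomogeneity relevant to~\cite[Lemma 1.8.3]{FM} is the tautological $\bZ^l$-grading in which $X_J$ with $J\in\cJ_{d_i}$ sits in degree $\varepsilon_i$, and $\varphi_{O,C}$ is trivially homogeneous for that grading by construction; no compatibility with the $\grad$-grading on $T$ is required at this stage.
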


One notices that $\bP^{O,C}_\bd$ is isomorphic to the space $\bP_\bd$ containing the flag variety $F_\bd$. In particular one may hope to find a bijection between order ideals and Pl\"ucker variables providing an isomorphism $\psi:\bC[\cJ_\bd]\to S_\bd$ that would map $I^{O,C}_\bd$ to an initial ideal of $I_\bd$. If such an isomorphism is found, we have realized the generalized Hibi variety as a flat degeneration of the flag variety. This was essentially the method of obtaining the flat degenerations given by the Gelfand--Tsetlin polytope in~\cite{GL} and by the FFLV polytope in~\cite{FFFM}. One of our main goals is to generalize this approach to all $H_{O,C}(\la)$.


Note that every $\cJ_\bd$ can be viewed as a distributive lattice with $\cap$ as meet, $\cup$ as join and $\subset$ as the order relation. The ideal $I^{P\backslash A,\varnothing}_\bd\subset\bC[\cJ_\bd]$ is seen to be generated by the binomials $X_{J_1}X_{J_2}-X_{J_1\cup J_2}X_{J_1\cap J_2}$ and is known as the \textit{Hibi ideal} of the lattice $\cJ_\bd$ (after \cite{H}). Hence, the $I^{O,C}_\bd$ are referred to as \textit{generalized Hibi ideals}. Furthermore, the toric ring $\varphi_{P\backslash A,\varnothing}(\bC[\cJ_\bd])$ is known as the \textit{Hibi ring} of the lattice $\cJ_\bd$. We will denote $R_\bd^{O,C}=\varphi_{O,C}(\bC[\cJ_\bd])$ and refer to these rings as \textit{generalized Hibi rings}. The $\Proj$ of the Hibi ring is known as the \textit{Hibi variety} which motivates our terminology for $H_{O,C}(\la)$.

\subsection{Pipe dreams}

There exists a well-known method of associating a permutation to every subset of $P$. Consider the permutation group $\mathcal S_n$ and for $(i,j)\in P$ let $s_{i,j}$ denote the transposition $(i,j)\in\mathcal S_n$. In particular, $s_{i,i}$ is always the identity.
\begin{definition}
For any subset $M\subset P$ let $w_M\in\mathcal S_n$ denote the product of all $s_{i,j}$ with $(i,j)\in M$ ordered by $i$ increasing from left to right and within a given $i$ by $j$ increasing from left to right.
\end{definition}

For instance, for $n=4$ we have (in one-line notation):
\begin{equation}\label{pipedreamex}
w_{\{(1,1),(2,2),(1,2),(2,3),(1,4)\}}=s_{1,1}s_{1,2}s_{1,4}s_{2,2}s_{2,3}=(4,3,1,2).    
\end{equation} 
One may also note that $w_P=w_0$ is the longest permutation. It is obvious that $w_M$ is uniquely determined by $M\backslash A$ but it is convenient for us to consider subsets of $P$ rather than $P\backslash A$.

The permutation $w_M$ can be found with the use of a diagram known as a \textit{pipe dream} (the terminology is due to~\cite{KnM} but similar diagrams were already considered in~\cite{BB} where they were termed \textit{RC-graphs}). In terms of the visualization of $P$ used in~\eqref{hasse} the pipe dream corresponding to $M$ consists of $n$ polygonal curves or \textit{pipes} described as follows. The $i$th pipe enters the element $(i,n)$ from the bottom-right, continues in this direction until it reaches an element of $M\cup A$, after which it turns left and continues going to the bottom-left until it reaches an element of $M$, after which it turns right and again continues to the top-left until it reaches an element of $M\cup A$, etc. The last element passed by the pipe will have the form $(1,j)$. It is straightforwardly checked by induction on $|M|$ that one has $w_M(i)=j$. The pipe dream of the set in~\eqref{pipedreamex} is shown below, here elements of the set are highlighted in black and each pipe is shown in its own colour.
\begin{equation}\label{pipedreamfig}
\begin{tikzcd}[row sep=tiny,column sep=tiny]
&(1,1)\ar[blue]{ld}\ar[blue]{ld}&&(2,2)\ar[blue]{ld}&&\color{lightgray}{(3,3)}\ar[blue]{ld}&&\color{lightgray}{(4,4)}\ar[cyan]{ld}\\
\phantom{(1,1)}&&(1,2)\ar[blue]{lu}\ar[cyan]{ld}&&(2,3)\ar[green]{ld}\ar[blue]{lu}&&\color{lightgray}{(3,4)}\ar[blue]{lu}\ar[cyan]{ld}&&\phantom{(1,1)}\ar[cyan]{lu}\\
&\phantom{(1,1)}&&\color{lightgray}{(1,3)}\ar[green]{ld}\ar[cyan]{lu}&&\color{lightgray}{(2,4)}\ar[green]{lu}\ar[cyan]{ld}&&\phantom{(1,1)}\ar[blue]{lu}\\
&&\phantom{(1,1)}&&(1,4)\arrow[red]{ld}\ar[cyan]{lu}&&\phantom{(1,1)}\ar[green]{lu}\\
&&&\phantom{(1,1)}&&\phantom{(1,1)}\arrow[red]{lu}
\end{tikzcd}
\end{equation}

\begin{remark}
The correspondence between subsets and permutations as well as its visualization are subject to many variations and dualizations, the conventions chosen here differ from those in~\cite{BB} and~\cite{KnM}. For readers familiar with other approaches the following alternative characterization of $w_M$ may be helpful. For $1\le i\le n-1$ let $s_i$ be the elementary transposition $(i,i+1)$ and set $s_0=\id$. Then one may check that $w_M=w'w_0$ where $w'$ is the product of $s_{j-i}$ over all $(i,j)\in P\backslash M$ with the factors ordered by $i$ increasing from left to right and within a given $i$ by $j$ increasing from left to right. Since the pipes cross precisely at elements of $P\backslash (M\cup A)$, one sees that $w_M$ is the product of the simple reflections corresponding to these crossings (in the spirit of~\cite{BB,KnM}) which is then reversed by multiplying by $w_0$.
\end{remark}

\section{Toric degenerations}\label{toric}

In this section we define an isomorphism between $\bC[\cJ_\bd]$ and $S_\bd$ mapping the toric ideal $I^{O,C}_\bd$ to an initial ideal of $I_\bd$. This realizes the generalized Hibi variety as a flat degeneration of the partial flag variety.

Fix a partition $O\sqcup C=P\backslash A$ and a signature $\bd$. We use the shorthand $w_{M_{O,C}(J)}=w^J$. We also use the standard convention $X_{i_1,\dots,i_k}=(-1)^\sigma X_{i_{\sigma(1)},\dots,i_{\sigma(k)}}$ for a permutation $\sigma\in\mathcal S_k$ to define Pl\"ucker variables with subscripts which are not increasing. Similarly, we set $D_{i_1,\dots,i_k}=(-1)^\sigma D_{i_{\sigma(1)},\dots,i_{\sigma(k)}}$.

Consider the homomorphism $\psi:\bC[\cJ_\bd]\to S_\bd$ given by \[\psi(X_J)=X_{w^J(1),\dots,w^J(|J\cap A|)}.\] The key fact proved in this section is as follows.
\begin{theorem}\label{degenmain}
The map $\psi$ is an isomorphism and there exists a total monomial order $\lessdot$ on $T$ for which the $D_{i_1,\dots,i_k}$ form a sagbi basis of $R_\bd$ and $\psi(I_\bd^{O,C})=\initial_{\lessdot^\varphi} I_\bd$. In particular, $\psi$ induces an isomorphism between $R_\bd^{O,C}$ and $\initial_\lessdot R_\bd$.
\end{theorem}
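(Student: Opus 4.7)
The strategy is to build $\lessdot$ from a suitable weight function so that each minor $D_{w^J(1),\dots,w^J(|J\cap A|)}$ has a distinguished \emph{pipe-dream term} $z^{m_J}$ as its $\lessdot$-initial monomial, and then to match the resulting initial subalgebra with $R_\bd^{O,C}$ via a Hilbert-series comparison.

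First I would associate to every $J\in\cJ_\bd$ a specific monomial $z^{m_J}$ visibly appearing in $D_{w^J(1),\dots,w^J(k)}$: reading the pipe dream of $M_{O,C}(J)$, each of the $k$ pipes that terminates at a column in $\{w^J(1),\dots,w^J(k)\}$ singles out one row--column position per row $r\in\{1,\dots,k\}$, producing a permutation summand $\pm z^{m_J}$ of the $k\times k$ determinant, with $m_J\in\bZ_{\ge 0}^P$ encoded by the elbow pattern of those pipes. Next I would choose a weight $\omega\in\bR^P$ on the variables (refined to a total order $\lessdot$) calibrated so that (i) $z^{m_J}$ is the strict $\omega$-maximum summand of $D_{w^J(1),\dots,w^J(k)}$ for every $J$, and (ii) distinct $J\in\cJ_k$ produce distinct tuples $(w^J(1),\dots,w^J(k))$. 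Property (ii) is a pipe-dream readout: from the shape of the pipe dream of $M_{O,C}(J)$ one reconstructs $M_{O,C}(J)$ itself (the $O\cup A$-elbows give $J\cap(O\cup A)$ and the $C$-elbows give the $\prec$-maximal elements of $J\cap C$) and hence $J$. Combined with $|\cJ_k|=\binom{n}{k}$, this gives bijectivity of $\psi$.

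The remaining ingredients are Hilbert-series counting. The subalgebra $R'\subseteq T$ generated by $\{\initial_\lessdot D_{w^J(1),\dots,w^J(k)}:J\in\cJ_\bd\}$ is contained in $\initial_\lessdot R_\bd$, and equality holds because, by property (i) together with the Minkowski sum property of MCOPs (Proposition~\ref{minkowski}), the $\la$-graded monomials in $R'$ are in bijection with $\cO_{O,C}(\la)\cap\bZ^P$, whose count equals $\dim V_\la=\dim R_\bd[\la]=\dim\initial_\lessdot R_\bd[\la]$ by Corollary~\ref{intpoints}. This verifies the sagbi basis property, whence Proposition~\ref{idealsubalg} delivers $\initial_{\lessdot^\varphi}I_\bd=\ker\varphi_\lessdot$. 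For the identity $\psi(I_\bd^{O,C})=\initial_{\lessdot^\varphi}I_\bd$ it then suffices to observe that $\varphi_\lessdot\circ\psi$ and $\varphi_{O,C}$ are monomial maps $\bC[\cJ_\bd]\to T$ whose kernels both coincide with the lattice of affine relations among integer points of the polytopes $\cO_{O,C}(\om_k)$: the two exponent-vector families $\{m_J\}$ and $\{x_{O,C}(J)\}$ are unimodularly equivalent and have matching graded counts, so their toric kernels agree. The induced isomorphism $R_\bd^{O,C}\simeq\initial_\lessdot R_\bd$ follows at once.

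The main obstacle will be the weight construction together with the verification of property (i). This is a delicate combinatorial exchange argument: every non-pipe-dream summand of the $k\times k$ minor arises from $z^{m_J}$ by swapping pairs of columns between pipes, and $\omega$ must be tuned so that every such swap strictly decreases the total weight. Crucially, $\omega$ must simultaneously handle the two kinds of behavior distinguished by the partition $O\sqcup C$ --- the order-type contributions at elbows in $O\cup A$, which chain continuously through the pipe dream, and the chain-type contributions at elbows in $C$, which register only at the $\prec$-maximum of their chain --- and it is the interpolation between these two regimes that makes the general MCOP case genuinely more subtle than the pure Gelfand--Tsetlin ($C=\varnothing$) and pure FFLV ($O=\varnothing$) cases treated in \cite{GL,FFFM}.
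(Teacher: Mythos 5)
Your architecture is the same as the paper's: assign to each $J\in\cJ_\bd$ a distinguished summand of $D_{w^J(1),\dots,w^J(|J\cap A|)}$, build a monomial order that singles it out, establish the sagbi property by a graded-dimension count against $\dim V_\la=|\cO_{O,C}(\la)\cap\bZ^P|$, and then identify $\psi(I_\bd^{O,C})$ with $\initial_{\lessdot^\varphi}I_\bd$ by factoring one monomial map through the other. That high-level plan is correct, and the dimension argument you sketch is indeed how Proposition~\ref{thetasagbi} goes.

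However, there are genuine gaps, and they are exactly where the theorem lives. First, you never produce the monomial order; you explicitly defer it as ``the main obstacle,'' but the theorem asserts its existence, so without it the proof is incomplete. The paper does not hunt for a weight vector $\omega$ by exchange arguments: it defines the numbers $r(i,j)$ via pipe-dream readouts and then lays down an explicit lexicographic order $\lessdot$ (Definition~\ref{orderdef}), with Proposition~\ref{initialD} verifying the initial-term claim directly. Second, your choice of the distinguished summand $z^{m_J}$ is misdescribed. You say $m_J\in\bZ_{\ge 0}^P$ is ``encoded by the elbow pattern of those pipes,'' but a summand of the $k\times k$ minor has exactly one variable per row $1,\dots,k$, with column indices that need not satisfy $i\le j$, so it does not live in $\bZ_{\ge 0}^P$; moreover elbow positions are not the right data. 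The paper's choice is simply the identity term $z_{1,w^J(1)}\cdots z_{k,w^J(k)}$, read off from the \emph{endpoints} of the pipes, not their elbows. Third, the claim that $\{m_J\}$ and $\{x_{O,C}(J)\}$ are ``unimodularly equivalent'' is the content of the map $\theta$, which must be constructed (the paper does so right after Proposition~\ref{thetaJ}); one cannot deduce it from ``matching graded counts,'' since equal Ehrhart data does not force a monomial-map identification. Once $\theta$ is in hand, the final identification follows just as you say, from $\theta\circ\varphi_{O,C}=\varphi_\lessdot\circ\psi$ together with injectivity of $\theta$ and $\psi$; but that equality of maps is precisely Propositions~\ref{thetaJ} and~\ref{initialD}, neither of which you have proved.
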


Of course, the isomorphism claim amounts to $J\mapsto\{w^J(1),\dots,w^J(|J\cap A|)\}$ being a bijection between (proper nonempty) ideals in $(P,\prec)$ and (proper nonempty) subsets in $[1,n]$. Now, in view of Theorem~\ref{hibiproj} together with Theorem~\ref{flatfamily} and the subsequent discussion we have the following consequence.
\begin{cor}
For an integral dominant weight $\la$ of signature $\bd$ the toric variety $H_{O,C}(\la)$ is a flat degeneration of the flag variety $F_\bd$.
\end{cor}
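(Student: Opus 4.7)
My plan is to reduce the theorem to an explicit initial-term computation for the minors $D_{i_1,\dots,i_k}$, using pipe dreams to encode both the bijection on variables and the monomial order. First I would check that $\psi$ is an isomorphism, i.e.\ that $J\mapsto\{w^J(1),\dots,w^J(|J\cap A|)\}$ restricts to a bijection between $\cJ_k$ and the set of $k$-element subsets of $[1,n]$ for every $k\in\bd$. Since both sides have cardinality $\binom{n}{k}$, it suffices to establish injectivity, which follows from the pipe dream picture~\eqref{pipedreamfig}: given the endpoints of the first $k$ pipes together with the knowledge that the turning points lie in $M_{O,C}(J)$, one recovers $M_{O,C}(J)$ and then $J$ by inspecting the diagram row by row.

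Second, I would construct $\lessdot$ as a weight order on $T$ with ties broken lexicographically, assigning a real weight $\omega_{i,j}$ to each $z_{i,j}$ so that (i) for every $J\in\cJ_k$ the monomial $\pm z^{x_{O,C}(J)}=\pm\prod_{(r,s)\in M_{O,C}(J)}z_{r,s}$ is a term of $D_{w^J(1),\dots,w^J(k)}$, and (ii) it is the $\lessdot$-leading such term. Claim (i) is exactly the assertion that the pipe dream for $M_{O,C}(J)$ provides a diagonal decomposition of $M_{O,C}(J)$ matching the rows $\{1,\dots,k\}$ of the minor to its columns $\{w^J(1),\dots,w^J(k)\}$ bijectively; claim (ii) is secured by giving $\omega_{i,j}$ a large bonus on $(i,j)\in O$ and a decreasing penalty along each chain of $C$ as $i$ grows, specializing to the weightings of~\cite{GL} when $C=\varnothing$ and of~\cite{FFFM} when $O=\varnothing$. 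By construction $\varphi_\lessdot\circ\psi$ then agrees with $\varphi_{O,C}$ up to signs.

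Third, a Hilbert-series comparison yields the sagbi basis property: $\varphi_\lessdot(S_\bd)\subset\initial_\lessdot R_\bd$, but the graded dimensions of $\varphi_\lessdot(S_\bd)$ coincide with those of $R_\bd^{O,C}$, which by Corollary~\ref{intpoints} and the Minkowski sum property (Proposition~\ref{minkowski}) equal $\dim V_\la=\dim R_\bd[\la]=\dim(\initial_\lessdot R_\bd)[\la]$; hence the inclusion is an equality and the $D_{i_1,\dots,i_k}$ form a sagbi basis. Proposition~\ref{idealsubalg} then gives $\initial_{\lessdot^\varphi}I_\bd=\ker\varphi_\lessdot=\psi(\ker\varphi_{O,C})=\psi(I^{O,C}_\bd)$, completing the proof. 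The main obstacle is the second step: designing a single weight function $\omega$ for which the leading term of every relevant minor is precisely the pipe dream monomial $z^{x_{O,C}(J)}$. A naive interpolation between the Gelfand--Tsetlin and FFLV weightings will not do, and one has to exploit the specific form $M_{O,C}(J)=(J\cap(O\cup A))\cup\max_\prec J$ — in particular that on the $C$-part only the $\prec$-maximal elements of $J$ appear — to verify condition (ii) uniformly in $J$.
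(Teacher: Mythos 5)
Your step 1 and your overall architecture---initial-term computation, sagbi basis via graded-dimension comparison, then Proposition~\ref{idealsubalg}---do match the paper. The gap is in step 2, and it is fatal as stated. You claim that $z^{x_{O,C}(J)}=\prod_{(r,s)\in M_{O,C}(J)}z_{r,s}$ is a term of the $k\times k$ minor $D_{w^J(1),\dots,w^J(k)}$. It is not: every term of that minor is a product of exactly one variable from each of the rows $1,\dots,k$, whereas $M_{O,C}(J)$ typically contains several elements in the same row. In the example of~\eqref{pipedreamfig} the set $M_{O,C}(J)$ meets row $1$ in three elements, so $z^{x_{O,C}(J)}$ contains the factor $z_{1,1}z_{1,2}z_{1,4}$ and therefore cannot occur in any minor. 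No choice of weight function can make a non-existent term the leading term; consequently $\varphi_\lessdot\circ\psi$ cannot agree with $\varphi_{O,C}$ even up to signs, and your Hilbert-series comparison in step 3 then has nothing to compare. The pipe dream does give you a bijection $i\mapsto w^J(i)$, but you have conflated the diagonal $\{(1,w^J(1)),\dots,(k,w^J(k))\}$ with the set $M_{O,C}(J)$, which are different.

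What you are missing is the monomial automorphism $\theta$ of the Laurent ring $T^\pm$. Its ratios $z_{i,r(i,j)}/z_{i,r(i,j')}$ telescope within each row, which is exactly what collapses the non-diagonal monomial $z^{x_{O,C}(J)}$ onto a genuine diagonal: Proposition~\ref{thetaJ} gives $\theta(z^{x_{O,C}(J)})=z_{1,w^J(1)}\cdots z_{|J\cap A|,w^J(|J\cap A|)}$. The order $\lessdot$ (a permuted lexicographic order, Definition~\ref{orderdef}) is then engineered so that precisely this diagonal is the leading term of the corresponding minor (Proposition~\ref{initialD}). The identity that makes the argument close is $\theta\circ\varphi_{O,C}=\varphi_\lessdot\circ\psi$, not $\varphi_{O,C}=\varphi_\lessdot\circ\psi$, and the graded-dimension comparison (Proposition~\ref{thetasagbi}) is run on $\theta(R^{O,C}_\bd)$ rather than on $\varphi_\lessdot(S_\bd)$ directly. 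Your framework is otherwise sound; repairing it requires precisely this intermediate automorphism and the two propositions relating it to $\lessdot$.
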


\begin{example}\label{psiex}
Let $n=4$ as in~\eqref{pipedreamfig} and suppose $2\in\bd$. Let $J$ be the order ideal generated by $(1,4)$ and $(2,3)$. For any $O$ containing $(1,2)$ and not containing $(1,3)$ the set $M_{O,C}(J)$ and its pipe dream will be as in~\eqref{pipedreamfig}, hence we will have $\psi(X_J)=X_{4,3}$. 
\end{example}

Theorem~\ref{degenmain} will be proved by constructing an isomorphism between $R_\bd^{O,C}$ and an initial subring of $R_\bd$. Before defining the corresponding map we will need the following notions. First, for $(i,j)\in P$ let $\langle (i,j)\rangle\in\cJ$ denote the principal order ideal composed of all $p\le (i,j)$. 

\begin{definition}
Consider $1\le i,j\le n$. If $i\le j$, let $r(i,j)$ denote the integer $w^{\langle(i,j)\rangle}(i)$. If $i>j$, set $r(i,j)=w^P(j)=w_O(j)$.
\end{definition}

The number $r(i,j)$ can be thought of as follows. If $i\le j$, one considers a pipe which starts from $(i,j)$ going to the bottom-left and continues according to the ``zigzag'' procedure used to define pipe dreams, turning at elements of $O\cup A$. The last element passed by this curve will be $(1,r(i,j))$. If $i>j$, one considers the pipe entering $(j,n)$ from the bottom-right and turning at elements of $O\cup A$. 
\begin{proposition}
For every $i$ the numbers $r(i,1),\dots,r(i,n)$ form a permutation of $[1,n]$.
\end{proposition}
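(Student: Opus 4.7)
The plan is to interpret $r(i,j)$ geometrically via pipes and then count the pipes touching each row in the pipe dream of $O\cup A$ on the full poset $P$. First I would identify $r(i,j)$ for $i\le j$ as the exit column of a specific pipe in that pipe dream. Indeed, in the pipe dream of $M_{O,C}(\langle(i,j)\rangle)$, every element $(i,k)$ with $k>j$ lies outside $M_{O,C}(\langle(i,j)\rangle)\cup A$ and is therefore a crossing; the $i$-th pipe travels straight to the top-left from $(i,n)$ through these crossings until it reaches $(i,j)$, which is necessarily an elbow, turns there, and proceeds to the bottom-left through $\langle(i,j)\rangle$. Since on $\langle(i,j)\rangle\setminus\{(i,j)\}$ the elbow/crossing pattern matches that of the $O\cup A$ pipe dream, the trajectory from $(i,j)$ onwards coincides with the pipe in the $O\cup A$ pipe dream containing the bottom-left step $(i,j)\to(i-1,j)$. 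So $r(i,j)$ equals the exit column of that pipe; for $i>j$, $r(i,j)=w_O(j)$ is by definition the exit column of the $j$-th pipe of the same diagram.

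The key geometric observation is then that in the $O\cup A$ pipe dream every pipe moves only to the top-left (preserving the first coordinate) or to the bottom-left (decreasing it), so the first coordinate is non-increasing along each pipe. Consequently the $k$-th pipe visits the rows $k,k-1,\dots,1$ in order, each as a contiguous segment, leaving row $r$ via exactly one bottom-left edge for every $r\in[2,k]$. Thus the pipes meeting row $i\ge 2$ are precisely those indexed by $k\in[i,n]$, and they are in natural bijection with the $n-i+1$ bottom-left edges $(i,j)\to(i-1,j)$ for $j\in[i,n]$ (different values of $j$ correspond to different pipes).

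Combining these two observations, $j\mapsto r(i,j)$ is a bijection from $[i,n]$ onto $\{w_O(k):k\in[i,n]\}$ and agrees with $w_O$ on $[1,i-1]$. Since $w_O$ is a permutation, the disjoint sets $\{w_O(k):k\in[i,n]\}$ and $\{w_O(k):k\in[1,i-1]\}$ partition $[1,n]$, so $r(i,\cdot)$ takes each value in $[1,n]$ exactly once. The case $i=1$ reduces immediately to the tautology $r(1,j)=j$. The one nontrivial input is the monotonicity of the first coordinate along each pipe, which is immediate from the zigzag turning rule but is what makes the edge-counting argument go through.
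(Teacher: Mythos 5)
Your proof is correct, and it is organized differently from the paper's even though both rest on the same pipe--dream interpretation of $r(i,j)$. The paper's proof is terse: it runs a pipe backward from $(1,r(i,j))$ and observes that $j$ can be read off as the first element of the form $(i,\cdot)$ passed by that pipe (when $j\ge i$), or $(j,n)$ as the last element (when $j<i$); this gives an explicit inverse and hence injectivity, with bijectivity following from finiteness. You instead split the domain as $[1,i-1]\sqcup[i,n]$, note that $r(i,\cdot)$ restricts to $w_O$ on $[1,i-1]$, and show that on $[i,n]$ it is $w_O$ precomposed with the bijection sending the edge $(i,j)\to(i-1,j)$ to the index of the unique pipe traversing it. The monotonicity of the first coordinate along a pipe (which the paper also uses implicitly) gives the count $n-i+1$ on both sides, making the edge-to-pipe map a bijection and not merely an injection. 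Your version spells out the surjectivity step that the paper leaves to the reader, at the cost of a slightly longer argument; the two are interchangeable and both are sound. One small point worth keeping visible in a writeup: that the trajectory starting at $(i,j)$ heading bottom-left agrees with a genuine pipe of the $O\cup A$ pipe dream regardless of whether $(i,j)$ itself lies in $O\cup A$ or in $C$ (in the latter case $(i,j)$ is a crossing and the relevant pipe goes straight from $(i+1,j)$ to $(i-1,j)$); you implicitly use this when you say ``the pipe in the $O\cup A$ pipe dream containing the bottom-left step'', and it does hold because the two trajectories coincide on $\langle(i,j)\rangle\setminus\{(i,j)\}$ where the elbow/crossing patterns of $M_{O,C}(\langle(i,j)\rangle)$ and $O\cup A$ agree.
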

\begin{proof}
This is best seen diagrammatically. Let us follow the pipe considered above in the opposite direction: consider a pipe entering $(1,r(i,j))$ from the bottom-left and turning at elements of $O\cup A$. If $j\ge i$, the element $(i,j)$ will be the first element of the form $(i,\cdot)$ passed by this pipe. If $j<i$, then $(j,n)$ will be the last element passed by this pipe. From this characterization it is easily seen that the $r(i,j)$ are pairwise distinct.
\end{proof}

\begin{definition}
Let $T^\pm$ denote the ring of Laurent series $\bC[z_{i,j}^{\pm1}]_{1\le i,j\le n}$. We define an endomorphism $\theta$ of $T^\pm$ as follows. Choose a variable $z_{i,j}$. If $i\ge j$, we set $\theta(z_{i,j})=z_{i,r(i,j)}$. If $i<j$, i.e.\ $(i,j)\in P\backslash A$, we consider the largest $j'<j$ such that $(i,j')\in O\cup A$ and set $\theta(z_{i,j})=z_{i,r(i,j)}/z_{i,r(i,j')}$. 
\end{definition}

\begin{proposition}\label{thetaJ}
For any $J\in\cJ_\bd$ one has \[\theta(z^{x_{O,C}(J)})=z_{1,w^J(1)}\dots z_{|J\cap A|,w^J(|J\cap A|)}.\]
\end{proposition}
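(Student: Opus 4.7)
The plan is to compute $\theta(z^{x_{O,C}(J)})$ row by row, splitting the product over $M_{O,C}(J)$ into pieces indexed by the first coordinate $i$. Setting $k=|J\cap A|$, since $J$ is an order ideal and $(i,i)$ is the $\prec$-minimum of row $i$, we have $(i,i)\notin J$ for $i>k$, and hence no $(i,j)$ with $j\ge i$ lies in $J$ (and thus in $M_{O,C}(J)$). Only rows $i\le k$ therefore contribute, which matches the $k$ factors on the right-hand side. For each such $i$ I would enumerate the $j$'s with $(i,j)\in M_{O,C}(J)$ as $j_0<j_1<\dots<j_m$; note $j_0=i$ since $(i,i)\in J\cap A\subset M_{O,C}(J)$.

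The main row-wise computation is a telescoping cancellation, for which the key input is that for every $s\in[1,m]$ the largest $j'<j_s$ with $(i,j')\in O\cup A$ equals $j_{s-1}$. Indeed, any $j'$ with $j_{s-1}<j'<j_s$ and $(i,j')\in O\cup A$ would satisfy $(i,j')\prec(i,j_s)\in J$, hence $(i,j')\in J$; the possibility $(i,j')\in A$ is excluded since $j'>i=j_0$, so $(i,j')\in J\cap O\subset M_{O,C}(J)$, contradicting the enumeration. Combined with the definition of $\theta$ and the identity $r(i,j_0)=r(i,i)$, this yields, via telescoping,
\[\prod_{s=0}^{m}\theta(z_{i,j_s})=z_{i,r(i,i)}\prod_{s=1}^{m}\frac{z_{i,r(i,j_s)}}{z_{i,r(i,j_{s-1})}}=z_{i,r(i,j_m)},\]
reducing the proposition to showing $r(i,j_m)=w^J(i)$ for each $i\le k$.

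For this last identification I would compare the pipe dreams of $M_{O,C}(\langle(i,j_m)\rangle)$ and of $M_{O,C}(J)$ and argue that pipe $i$ follows the same path in both. A case analysis analogous to the one above gives that row $i$ of $M_{O,C}(\langle(i,j_m)\rangle)$ equals $\{(i,j_0),\dots,(i,j_m)\}$, matching row $i$ of $M_{O,C}(J)$, so in both dreams pipe $i$ first turns at $(i,j_m)$. After this first turn the pipe moves only in directions weakly decreasing both coordinates and hence stays inside $\langle(i,j_m)\rangle$; on this subset one verifies $M_{O,C}(J)\cap\langle(i,j_m)\rangle=M_{O,C}(\langle(i,j_m)\rangle)$, using that $\max_\prec J\cap\langle(i,j_m)\rangle\subseteq\{(i,j_m)\}$ because any strict predecessor of $(i,j_m)\in J$ fails to be maximal in $J$. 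The subtle point is this final comparison at the cell $(i,j_m)$ itself, which can enter $M_{O,C}$ as the maximum of a principal ideal, as a $J\cap O$ element, or as an element of $C\cap\max_\prec J$; a short case analysis shows it lies in both sets in every case, so the two pipes coincide and end at the same column, giving $w^J(i)=r(i,j_m)$.
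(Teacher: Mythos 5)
Your proof is correct and follows the same two-step plan as the paper's (terse) argument: split the product over $M_{O,C}(J)$ into rows, telescope each row to $z_{i,r(i,j_m)}$, then identify $r(i,j_m)=w^J(i)$ by comparing pipes; the paper simply states both steps as evident from the pipe-dream picture, while you carefully fill in the details. One small omission: for the telescoping claim that ``the largest $j'<j_s$ with $(i,j')\in O\cup A$ equals $j_{s-1}$'' you show there is no such $j'$ strictly between $j_{s-1}$ and $j_s$, but you do not explicitly check that $(i,j_{s-1})$ itself lies in $O\cup A$; this is needed and follows quickly --- for $s=1$ it is $(i,i)\in A$, and for $s\ge 2$ the element $(i,j_{s-1})\in M_{O,C}(J)$ is dominated by $(i,j_s)\in J$, hence is not $\prec$-maximal in $J$ and so must belong to $J\cap(O\cup A)$.
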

\begin{proof}
Evidently, for a given $i_0\le |J\cap A|$ the product of all $\theta(z_{i_0,j})$ with $(i_0,j)\in M_{O,C}(J)$ is equal to $z_{i,r(i_0,j_0)}$ where $j_0$ is maximal among such $j$. However, one has $w^J(i_0)=r(i_0,j_0)$ as seen directly from the pipe dream visualizations for $w^J$ and $r(i_0,j_0)$. 
\end{proof}

\begin{definition}\label{orderdef}
We define a total monomial order $\lessdot$ on $T$ as follows. First, we set $z_{i_1,j_1}\gtrdot z_{i_2,j_2}$ when $i_1<i_2$. Next, for $j_1<j_2$ we set $z_{i,r(i,j_1)}\gtrdot z_{i,r(i,j_2)}$ if and only if $(i,j_1)\in O\cup A$ and there is no $j\in[j_1+1,j_2]$ such that $(i,j)\in O\cup A$. In other words, for some $i$ let $l_1>\dots>l_m=i$ denote all $l$ such that $(i,l)\in O\cup A$, then we have
\begin{multline*}
z_{i,r(i,l_1)}\gtrdot z_{i,r(i,n)}\gtrdot \dots\gtrdot z_{i,r(i,l_1+1)}\gtrdot z_{i,r(i,l_2)}\gtrdot z_{i,r(i,l_1-1)}\gtrdot \dots\gtrdot z_{i,r(i,l_2+1)}\gtrdot \dots\\\gtrdot z_{i,r(i,l_m)}\gtrdot z_{i,r(i,l_{m-1}-1)}\gtrdot \dots\gtrdot z_{i,r(i,i+1)}\gtrdot z_{i,r(i,i-1)}\gtrdot \dots\gtrdot z_{i,r(i,1)}.
\end{multline*}
Finally, we extend $\lessdot$ to a lexicographic order: for monomials $M_1=\prod z_{i,j}^{b_{i,j}}$, $M_2=\prod z_{i,j}^{c_{i,j}}$ let $z_{i,j}$ be the $\lessdot$-maximal variable for which $b_{i,j}\neq c_{i,j}$ and set $M_1\gtrdot M_2$ if $b_{i,j}>c_{i,j}$.
\end{definition}

\begin{example}
When $O=P\backslash A$ we have $r(i,j)=j-i+1$ for $j\le i$ and $r(i,j)=n-j+1$ for $j<i$. In particular, $\lessdot$ first compares the $z_{i,j}$ by $i$ and within a given $i$ we have \[z_{i,n-i+1}\gtrdot z_{i,n-i}\gtrdot\dots\gtrdot z_{i,1}\gtrdot z_{i,n-i+2}\gtrdot z_{i,n-i+3}\gtrdot\dots\gtrdot z_{i,n}.\] When $O=\varnothing$ we have $r(i,j)=j$ for all $j$. Hence, we have \[z_{i,i}\gtrdot z_{i,n}\gtrdot z_{i,n-1}\gtrdot\dots\gtrdot z_{i,i+1}\gtrdot z_{i,i-1}\gtrdot\dots\gtrdot z_{i,1}.\] The initial parts $\initial_\lessdot D_{i_1,\dots,i_k}$ produced by these two orders are described in Example~\ref{degenex}. Also, for $O=\{(1,2),(1,4),(2,3)\}$, i.e.\ the black elements of $P\backslash A$ in~\eqref{pipedreamfig}, below is a table containing the values $r(i,j)$ and the resulting order on the variables:
\begin{center}
\begin{tabular}{c|c|c|c|c}
    \diagbox{$i$}{$j$} &4&3&2&1  \\ \hline
    1& 4& 3& 2& 1\\ \hline
    2& 2& 3& 1& 4\\ \hline
    3& 2& 1& 3& 4\\ \hline
    4& 2& 1& 3& 4
\end{tabular}
\hspace{3cm}
$
\begin{aligned}
&z_{1,4}\gtrdot z_{1,2}\gtrdot z_{1,3}\gtrdot z_{1,1}\gtrdot\\
&z_{2,3}\gtrdot z_{2,2}\gtrdot z_{2,1}\gtrdot z_{2,4}\gtrdot\\
&z_{3,1}\gtrdot z_{3,2}\gtrdot z_{3,3}\gtrdot z_{3,4}\gtrdot\\
&z_{4,2}\gtrdot z_{4,1}\gtrdot z_{4,3}\gtrdot z_{4,4}
\end{aligned}
$
\end{center}
\end{example}

\begin{proposition}\label{initialD}
For $J\in\cJ_\bd$ we have \[\initial_\lessdot D_{w^J(1),\dots,w^J(|J\cap A|)}=z_{1,w^J(1)}\dots z_{|J\cap A|,w^J(|J\cap A|)}.\]
\end{proposition}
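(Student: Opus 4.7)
The plan is to expand the determinant by the Leibniz formula,
\[
D_{w^J(1),\ldots,w^J(k)}=\sum_{\sigma\in\mathcal S_k}\sgn(\sigma)\prod_{a=1}^k z_{a,w^J(\sigma(a))},\qquad k=|J\cap A|,
\]
and then identify the $\lessdot$-maximal summand. By Definition~\ref{orderdef}, $\lessdot$ is lexicographic with strict preference for smaller row indices, and each summand contains exactly one factor from every row $a\in[1,k]$; hence the $\lessdot$-maximum is picked out by a greedy row-by-row selection of $\sigma$: for $a=1,2,\ldots,k$, take $\sigma(a)$ so that $z_{a,w^J(\sigma(a))}$ is $\lessdot$-maximal among the row-$a$ variables still available after the earlier choices. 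The proposition will follow once I show that this greedy procedure returns $\sigma=\id$, since then the leading monomial is exactly $z_{1,w^J(1)}\cdots z_{k,w^J(k)}$ (in agreement with $\theta(z^{x_{O,C}(J)})$ from Proposition~\ref{thetaJ}).

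Letting $\pi_a$ denote the permutation of $[1,n]$ that lists the row-$a$ variables in $\lessdot$-decreasing order (the chain displayed in Definition~\ref{orderdef}), the greedy claim reduces by induction on $a$ to the combinatorial statement that for every $a\in[1,k]$ and $b\in\{a+1,\ldots,k\}$ one has $\pi_a^{-1}(w^J(a))<\pi_a^{-1}(w^J(b))$. By Proposition~\ref{thetaJ} I may write $w^J(b)=r(b,j_b^*)$ with $j_b^*=\max\{j:(b,j)\in M_{O,C}(J)\}$, while $\pi_a$ is assembled from the values $r(a,\cdot)$ in a pattern controlled by the row-$a$ elements $l_1>\cdots>l_m=a$ of $O\cup A$. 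The remaining task is thus a direct comparison between the pipe dream for $M_{O,C}(J)$, which produces the $w^J(b)$, and the principal-ideal pipe dreams defining the $r(a,\cdot)$.

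The hard part will be this pipe-tracing comparison. Specifically, I plan to prove: in the pipe dream for $M_{O,C}(J)$, the column $w^J(a)=r(a,j_a^*)$ at which the pipe from $(a,n)$ leaves row $a$ appears earlier in the row-$a$ sequence of Definition~\ref{orderdef} than every column $w^J(b)=r(b,j_b^*)$ reached by a pipe from some $(b,n)$ with $b>a$. I would walk each pipe stepwise, using that $M_{O,C}(J)\cup A$ agrees with $O\cup A$ on row $a$ up to the possible addition of elements of $\max_\prec J\cap C$, and that any pipe from $(b,n)$ with $b>a$ that crosses row $a$ must do so at a cell $(a,j)\notin M_{O,C}(J)\cup A$; such a cell lies, under the $l$-block partition defining $\pi_a$, at a strictly later position than $r(a,j_a^*)$. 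Once this lemma is established, induction on $a$ confirms that the greedy maximizer is $\sigma=\id$, and the proposition follows.
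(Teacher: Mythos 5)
Your high-level strategy matches the paper's: expand $D$ by Leibniz, observe that $\lessdot$ is lexicographic with row index as the first criterion, and reduce to showing that for each row $a$ the variable $z_{a,w^J(a)}$ is $\lessdot$-maximal among the $z_{a,w^J(b)}$ with $a\le b\le|J\cap A|$. This is exactly how the paper opens its proof. The divergence is in the combinatorial lemma you propose to carry the rest of the argument, and there the plan breaks down.

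Your pipe-tracing step hinges on the assertion that ``any pipe from $(b,n)$ with $b>a$ that crosses row $a$ must do so at a cell $(a,j)\notin M_{O,C}(J)\cup A$.'' This is not true for the cell that actually matters. What determines $w^J(b)$ at row $a$ is the cell $(a,l')$ at which pipe $b$ \emph{exits} row $a$ (the minimal $l'$ with $(a,l')$ on the pipe), since $w^J(b)=r(a,l')$. If the pipe enters row $a$ at an elbow of $M_{O,C}(J)$, it travels horizontally within row $a$ and then exits at a cell that \emph{must} lie in $M_{O,C}(J)\cup A$ (that is where the horizontal-to-vertical turn happens). A concrete instance: $n=3$, $O=P\backslash A$, $J=\{(1,1),(2,2),(1,2)\}$; here pipe $2$ exits row $1$ at $(1,1)\in A$. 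Your claim only covers the pass-through case $(a,l')\notin M_{O,C}(J)\cup A$, which is one of the three situations the paper's proof must treat.

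The paper's actual argument is a case analysis comparing $l'$ to $l=\max\{j:(a,j)\in M_{O,C}(J)\}$: the case $l'>l$, the case $l'<l$ with $(a,l')\notin O\cup A$, and the case $l'<l$ with $(a,l')\in O\cup A$. The last case is precisely the one your sketch misses; there one has to locate a second cell $(a,l'')\in O\cup A$ with $l''\in[l'+1,l]$ passed earlier by the pipe, and it is this $l''$ that forces $z_{a,r(a,l')}\lessdot z_{a,r(a,l)}$ via the block structure of $\pi_a$. Moreover, even where your claim does hold, the final inequality ``such a cell lies at a strictly later position than $r(a,j_a^*)$'' is asserted rather than proved. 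So the framework you set up is sound and identical to the paper's, but the key lemma is both misstated and unestablished; you would need to redo the pipe comparison along the lines of the paper's three-way case split.
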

\begin{proof}
It suffices to show that $z_{i,w^J(i)}$ is $\lessdot$-maximal among all variables of the form $z_{i,w^J(j)}$ with $i\le j\le |J\cap A|$. Note that $w^J(i)=r(i,l)$ for the maximal $l$ such that $(i,l)\in M_{O,C}(J)$. Now suppose that $z_{i,w^J(j)}\gtrdot z_{i,w^J(i)}$ for some $i< j\le |J\cap A|$. In the pipe dream of $M_{O,C}(J)$ consider the $j$th pipe, it passes through $(1,w^J(j))$. It also passes through some $(i,l')$, consider the minimal such $l'$, then $r(i,l')=w^J(j)$. Note that $(i,l')\in J$. If $l'>l$, then $(i,j')\notin O\cup A$ for all $j'\in[l+1,l']$ and $(i,l)\in J\cap (O\cup A)$ which provides $z_{i,r(i,l')}\lessdot z_{i,r(i,l)}$ contradicting our assumption. Now suppose that $l'<l$. If $(i,l')\notin O\cup A$, we again have $z_{i,r(i,l')}\lessdot z_{i,r(i,l)}$. If $(i,l')\in O\cup A$, then there must exist $(i,l'')\in O\cup A$ with $l''>l'$ which is passed by the $j$th pipe prior to $(i,l')$. We also have $(i,l'')\in J$, therefore, our choice of $l$ implies $l''\le l$, hence $l''\in[l'+1,l]$ and $z_{i,r(i,l')}\lessdot z_{i,r(i,l)}$.
\end{proof}

\begin{proposition}\label{thetasagbi}
The $D_{i_1,\dots,i_k}$ form a sagbi basis of $R_\bd$ for $\lessdot$.
\end{proposition}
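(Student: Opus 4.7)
The plan is to show that the subalgebra $B\subset T$ generated by the initial parts $\initial_\lessdot D_{i_1,\dots,i_k}$ equals all of $\initial_\lessdot R_\bd$. The inclusion $B\subset\initial_\lessdot R_\bd$ is automatic, so it suffices to compare the dimensions of their $\grad$-graded components.

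Combining Propositions~\ref{initialD} and~\ref{thetaJ}, one sees that $B$ is precisely the image of the homomorphism $\tilde\psi=\theta\circ\varphi_{O,C}\colon\bC[\cJ_\bd]\to T$, $X_J\mapsto z_{1,w^J(1)}\cdots z_{|J\cap A|,w^J(|J\cap A|)}$. To see this I need the fact that every strictly increasing sequence of length $k\in\bd$ arises as $(w^J(1),\dots,w^J(k))$ for some $J\in\cJ_k$; since $|\cJ_k|=\binom{n}{k}$, it is enough to prove injectivity of $J\mapsto\{w^J(1),\dots,w^J(k)\}$, which I would establish separately using the pipe dream visualization. A direct check also shows $\grad\tilde\psi(X_J)=\varepsilon_1+\sum_{i=2}^{|J\cap A|}(\varepsilon_i-\varepsilon_{i-1})=\varepsilon_{|J\cap A|}$, so $\tilde\psi$ is graded when $\bC[\cJ_\bd]$ carries the grading $\grad X_J=\varepsilon_{|J\cap A|}$ and the grading on $B$ induced from $\grad$ on $T$ agrees with this.

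The hard part will be showing that $\theta$ is injective on $R_\bd^{O,C}=\varphi_{O,C}(\bC[\cJ_\bd])$, for this will yield $\ker\tilde\psi=\ker\varphi_{O,C}$ and hence a graded isomorphism $B\cong R_\bd^{O,C}$. Since $\theta$ preserves each row $\{z_{i,j}\}_j$, injectivity reduces to a row-by-row calculation on the exponent lattice. For fixed $i$, after reindexing the codomain basis of that row via the permutation $l\mapsto r(i,l)$, the map sends $e_j$ to $e_j$ when $j\le i$ and to $e_j-e_{j'}$ when $j>i$, where $j'<j$ is the index appearing in the definition of $\theta$. The resulting $\bZ$-linear transformation is unipotent lower-triangular, hence invertible.

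With the graded isomorphism $B\cong R_\bd^{O,C}$ in hand, the proof concludes by comparing two dimension counts for any $\la$ of signature $\bd$: on one side $\dim(\initial_\lessdot R_\bd)[\la]=\dim R_\bd[\la]=\dim V_\la$, and on the other $\dim B[\la]=\dim R_\bd^{O,C}[\la]=|\cO_{O,C}(\la)\cap\bZ^P|=\dim V_\la$ by Corollary~\ref{intpoints}. Since $B\subset\initial_\lessdot R_\bd$ and their graded dimensions match in every degree, the inclusion is an equality.
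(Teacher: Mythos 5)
Your proposal follows essentially the same path as the paper: identify the initial monomials of the $D_{i_1,\dots,i_k}$ with the generators $\theta(z^{x_{O,C}(J)})$ of $\theta(R_\bd^{O,C})$ via Propositions~\ref{thetaJ} and~\ref{initialD}, use injectivity of $\theta$, and close by comparing graded dimensions against $\dim V_\la$. Your row-by-row unitriangularity check for $\theta$ (after reindexing by $l\mapsto r(i,l)$) is a correct and welcome elaboration of what the paper only asserts.

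There is one real gap in the final dimension count: you state $\dim R_\bd^{O,C}[\la]=|\cO_{O,C}(\la)\cap\bZ^P|$ without justification, and it is not automatic. The graded piece $R_\bd^{O,C}[\la]$ is spanned by monomials $z^{\sum_i x_{O,C}(J_i)}$ where the $J_i$ are ideals whose cardinalities assemble to the signature of $\la$. Such exponent vectors certainly lie in the Minkowski sum $\cO_{O,C}(\la)$, so a priori you only get $\dim R_\bd^{O,C}[\la]\le|\cO_{O,C}(\la)\cap\bZ^P|$, which is the wrong direction for the squeeze. To get equality you need that every lattice point of $\cO_{O,C}(\la)$ decomposes as a sum of lattice points of the fundamental summands, which is exactly the Minkowski sum property of MCOPs, Proposition~\ref{minkowski}, and this is where the paper invokes it. Without that citation the argument does not close.

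A smaller remark: establishing the equality $B=\mathrm{im}(\tilde\psi)$ (and with it the surjectivity of $J\mapsto\{w^J(1),\dots,w^J(|J\cap A|)\}$) is more than you need here. The inclusion $\mathrm{im}(\tilde\psi)\subset B\subset\initial_\lessdot R_\bd$ together with the dimension count already forces all three to coincide. The paper indeed postpones the bijectivity of $J\mapsto\{w^J(1),\dots,w^J(|J\cap A|)\}$ to the proof of Theorem~\ref{degenmain}, where it is deduced from the injectivity of $\theta$ rather than proved directly.
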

\begin{proof}
Propositions~\ref{thetaJ} and~\ref{initialD} provide $\initial_\lessdot D_{w^J(1),\dots,w^J(|J\cap A|)}=\theta(z^{x_{O,C}(J)})$ and we are to show that these elements generate $\initial_\lessdot R_\bd$. Since the $\theta(z^{x_{O,C}(J)})$ generate $\theta(R^{O,C}_\bd)$, we have $\theta(R^{O,C}_\bd)\subset \initial_\lessdot R_\bd$. We prove that $\theta(R^{O,C}_\bd)[\la]=\initial_\lessdot R_\bd[\la]$ for every $\la$. Proposition~\ref{thetaJ} implies that for $J\in\cJ_\bd$ we have $\grad\theta(z^{x_{O,C}(J)})=\om_{|J\cap A|}$. Consequently, if one chooses $a_i$ integer points in every $\cO_{O,C}(\om_i)$, then the sum $x$ of these $a_1+\dots+a_{n-1}$ points satisfies $\grad\theta(z^x)=\la$. However, by Proposition~\ref{minkowski} every $x\in\cO_{O,C}(\la)\cap\bZ^P$ can be expressed as a such sum, hence satisfies $\theta(z^x)\in \theta(R^{O,C}_\bd)[\la]$. Furthermore, the monomials $\theta(z^x)$ with $x\in\cO_{O,C}(\la)\cap\bZ^P$ are pairwise distinct since $\theta$ is seen to be an automorphism from its definition. Corollary~\ref{intpoints} now provides \[\dim\theta(R^{O,C}_\bd)[\la]\ge\dim V_\la=\dim R_\bd[\la]=\dim\initial_\lessdot R_\bd[\la].\qedhere\]
\end{proof}

\begin{proof}[Proof of Theorem~\ref{degenmain}]
For distinct $J_1,J_2\in\cJ_\bd$ the sets $\{w^{J_1}(1),\dots,w^{J_1}(|J_1\cap A|)\}$ and $\{w^{J_2}(1),\dots,w^{J_2}(|J_2\cap A|)\}$ are distinct. Otherwise, Propositions~\ref{thetaJ} and~\ref{initialD} would imply $\theta(z^{x_{O,C}(J_1)})=\pm\theta(z^{x_{O,C}(J_2)})$ which is impossible. This proves the isomorphism claim. Propositions~\ref{thetaJ} and~\ref{initialD} also show that the maps $\theta\circ\varphi_{O,C}$ and $\varphi_\lessdot \circ\psi$ coincide on $\bC[\cJ_\bd]$. Since $\psi$ and $\theta$ are injective, $\psi$ must identify $\ker\varphi_{O,C}=I^{O,C}_\bd$ and $\ker\varphi_\lessdot =\initial_{\lessdot^\varphi}I_\bd$ (the latter by Propositions~\ref{thetasagbi} and~\ref{idealsubalg}). Meanwhile, $\theta$ must identify the maps' images $R^{O,C}_\bd$ and $\initial_\lessdot R_\bd$.
\end{proof}

\begin{example}\label{degenex}
Let $O=P\backslash A$. Then for $1\le i_1<\dots<i_k<n$ one has $\initial_\lessdot D_{i_1,\dots,i_k}=z_{1,i_k}\dots z_{k,i_1}$, i.e.\ this monomial order is ``antidiagonal'' and we obtain the initial subalgebra $\initial_\lessdot R_\bd$ and initial ideal $\initial_{\lessdot^\varphi} I_\bd$ which define the Gelfand--Tsetlin toric degeneration studied in~\cite{Stu,GL,KM}. Now let $O=\varnothing$. Then $\initial_\lessdot D_{i_1,\dots,i_k}=\pm z_{1,\alpha_1}\dots z_{k,\alpha_k}$ where $(\alpha_1,\dots,\alpha_k)$ is the permutation of $(i_1,\dots,i_k)$ that forms a \textit{PBW tuple}: $\alpha_j=j$ if $\alpha_j\le k$ while all $\alpha_j>k$ are ordered decreasingly. The corresponding initial ideal and initial subalgebra define the FFLV toric degeneration and were studied in~\cite{FFFM} (see also~\cite[Section 6]{M2} and~\cite[Subsection 3.2]{FMP}).
\end{example}

\begin{remark}
Results in~\cite{M2} (see~\cite[Corollary 5.4]{M2}) show that the generalized Hibi ideal $I^{O,C}_\bd$ is generated by quadratic binomials \[X_{J_1}X_{J_2}-X_{J_1\cup J_2}X_{J_1*_{O,C}J_2}\] where $J_1,J_2\in\cJ_\bd$ and $*_{O,C}$ is a certain binary operation on $\cJ_\bd$. By applying $\psi$ to these binomials we obtain a set of generators for the toric initial ideal $\psi(I^{O,C}_\bd)$.
\end{remark}

\begin{remark}
It may be curious to compute how many different toric degenerations we obtain from this construction. For instance, for $n=3$ and $\bd=(1,2)$ we have 8 different partitions $(O,C)$, however, the ideal $I_{(1,2)}$ is principal and has only 3 initial toric ideals all of which lie in the same $\mathcal S_3$-orbit (see Section~\ref{monbasis}), in particular, the corresponding toric varieties are pairwise isomorphic. All 3 of these initial ideals have the form $\psi(I_{(1,2)}^{O,C})$ for some $(O,C)$. For $n=4$ and $\bd=(1,2,3)$ the situation is only slightly more interesting: the 64 partitions $(O,C)$ provide 24 distinct initial ideals $\psi(I_\bd^{O,C})$ contained in 2 different $\mathcal S_4$-orbits, one containing $\psi(I_\bd^{P\backslash A,\varnothing})$, the other $\psi(I_\bd^{\varnothing,P\backslash A})$. Thus, we only obtain 2 degenerations up to isomorphism: the Gelfand--Tsetlin and the FFLV toric varieties. The first degenerations which are not isomorphic to either of these appear for $n=5$. For $\bd=(1,2,3)$ one obtains 4 pairwise non-isomorphic toric varieties given by $O=\varnothing$, $O=\{(1,2)\}$, $O=\{(1,3)\}$ and $O=P\backslash A$ (or $\{(1,2),(1,3)\}$). For $\bd=(1,2,3,4)$ computations become rather resource-consuming but the number of isomorphism classes is seen to be between 6 and 8.
\end{remark}

\section{Standard monomials and Young tableaux}

The generalized Hibi ring $R^{O,C}_\bd$ possesses a natural basis parametrized by increasing chains of order ideals. In this section we apply the map $\psi$ to this basis, obtaining a monomial basis in the Pl\"ucker algebra (i.e.\ a \textit{standard monomial theory}) which is parametrized by Young tableaux satisfying a certain standardness condition.

Consider the monomial ideal $I^M_\bd\subset\bC[\cJ_\bd]$ generated by all $X_{J_1}X_{J_2}$ with $J_1\not\subset J_2$ and $J_2\not\subset J_1$. In other words, monomials not in $I^M_\bd$ are those of the form $X_{J_1}\dots X_{J_m}$ with $J_1\subset\dots\subset J_m$. The following is a straightforward observation (see~\cite[Proposition 5.3]{M2}).
\begin{proposition}
There exists a monomial order $\ll$ such that $\initial_\ll I^{O,C}_\bd=I^M_\bd$. Consequently, monomials not lying in $I^M_\bd$ project to a basis in $R^{O,C}_\bd$.
\end{proposition}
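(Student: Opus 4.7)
The plan is to construct an explicit monomial order $\ll$ on $\bC[\cJ_\bd]$ by exploiting the quadratic binomial generators of $I^{O,C}_\bd$ recalled in the preceding Remark (after~\cite{M2}): $X_{J_1}X_{J_2}-X_{J_1\cup J_2}X_{J_1*_{O,C}J_2}$. The combinatorial input from~\cite{M2} that I need is that for incomparable $J_1,J_2\in\cJ_\bd$ one has $J_1*_{O,C}J_2\subsetneq J_i$ for $i=1,2$ (while trivially $J_i\subsetneq J_1\cup J_2$); for comparable $J_1,J_2$ the generator is zero and contributes nothing.

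First I would fix any linear extension $\prec^*$ of $(\cJ_\bd,\subsetneq)$ and take $\ll$ to be the associated \emph{sort order}: to each monomial $\prod_k X_{J_{i_k}}$ I assign the $\prec^*$-nondecreasing tuple obtained by sorting the subscripts with multiplicity, and I compare two monomials by the lexicographic order on these tuples, with smallest entries compared first. This is well known to define a total (hence weak) multiplicative monomial order. Then for any incomparable pair $J_1\prec^* J_2$, the two monomials of the corresponding binomial have sorted tuples $(J_1,J_2)$ and $(J_1*_{O,C}J_2,\,J_1\cup J_2)$; since $J_1*_{O,C}J_2\prec^* J_1$, the second tuple is $\ll$-smaller and $X_{J_1}X_{J_2}$ is the $\ll$-leading term of the binomial. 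Running over all incomparable pairs yields the inclusion $I^M_\bd\subseteq\initial_\ll I^{O,C}_\bd$.

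To upgrade this to an equality I would compare Hilbert functions. The monomials outside $I^M_\bd$ are exactly those whose support is a chain in $(\cJ_\bd,\subsetneq)$, i.e., the multichains. In a fixed multigrading $\la$ of signature $\bd$, these multichains are in classical bijection with semistandard Young tableaux of shape $\la$, and hence with the lattice points of the Gelfand--Tsetlin polytope $\cO_{P\backslash A,\varnothing}(\la)$. By Ehrhart equivalence (Proposition~\ref{ehrhart}) and Corollary~\ref{intpoints} this count equals $\dim V_\la=\dim R^{O,C}_\bd[\la]=\dim(\bC[\cJ_\bd]/\initial_\ll I^{O,C}_\bd)[\la]$, where the last equality uses that passage to an initial ideal preserves the graded Hilbert function. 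The inclusion between ideals with quotients of equal Hilbert function must then be an equality, and the ``consequently'' clause is immediate: the standard monomials form a basis of $\bC[\cJ_\bd]/\initial_\ll I^{O,C}_\bd$, which has the same graded dimensions as $R^{O,C}_\bd$.

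The one point that is not purely formal is the strict-containment property of $*_{O,C}$ borrowed from~\cite{M2}; without it, the sorted tuple $(J_1*_{O,C}J_2,\,J_1\cup J_2)$ could tie or overtake $(J_1,J_2)$ and the leading-term computation would fail. Everything else is standard poset combinatorics and a routine Hilbert-series matching, so I expect no serious obstacle beyond carefully invoking the cited structural result.
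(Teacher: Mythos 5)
Your reconstruction is correct in spirit and is very likely the argument underlying \cite[Proposition 5.3]{M2}, to which the paper simply defers. Two cautions are in order. First, a ``sort order'' that compares monomials by lexicographic comparison of their sorted subscript tuples only directly compares monomials of the same degree; as stated it is neither total nor weak on all of $\bC[\cJ_\bd]$ (incomparability would fail transitivity). To get a genuine monomial order in the sense of Section~\ref{preliminaries} you should either refine by total degree or explicitly declare monomials of different $\grad$-degree to be incomparable; the latter does give a legitimate weak multiplicative order since $I^{O,C}_\bd$ is $\grad$-homogeneous. Second, and more substantively, the leading-term computation hinges on the \emph{strict containments} $J_1 *_{O,C} J_2 \subsetneq J_1$ and $\subsetneq J_2$ for incomparable $J_1,J_2$. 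Primality of the toric ideal $I^{O,C}_\bd$ only yields $J_1*_{O,C}J_2\neq J_1, J_2$, and if $J_1*_{O,C}J_2$ were merely incomparable to $J_1$ rather than below it, a poorly chosen linear extension $\prec^*$ could place $J_1\prec^* J_1*_{O,C}J_2$, making the sorted tuple $(J_1*_{O,C}J_2, J_1\cup J_2)$ the leading one (since $J_2\prec^* J_1\cup J_2$ is forced). You should therefore make sure that it is the containment, and not only non-equality, that \cite{M2} supplies, or else construct $\prec^*$ to accommodate the incomparable cases. The Hilbert-function closing step, via the multichain/SSYT count together with Proposition~\ref{ehrhart}, Corollary~\ref{intpoints} and the flatness of passage to an initial ideal, is sound.
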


In view of Theorem~\ref{degenmain}, by applying $\psi$ we obtain an initial monomial ideal $\psi(I^M_\bd)$ of $I_\bd$ and a set of monomials in $S_\bd$ projecting to a basis in the Pl\"ucker algebra $R_\bd$. We call these \textit{$(O,C)$-standard monomials}, in this section we aim to give a more explicit description of this set in terms of Young tableaux.

\begin{remark}
Geometrically, the ideal $\psi(I^M_\bd)$ defines a flat degeneration of $F_\bd$. However, this degeneration does not, up to isomorphism, depend on $(O,C)$, only its embedding into $\bP_\bd$ does. One easily sees that it is a union of products of projective spaces enumerated by maximal chains in $(\cJ_\bd,\subset)$ and $I^M_\bd$ is, in fact, the Stanley--Reisner ideal of the latter poset. This degeneration appears in~\cite{BL,FL,FM,CFL}.
\end{remark}

The characterization we give relies on the following notion. For $1\le i\le n$ let $\sigma_i$ denote the permutation inverse to $(r(i,1),\dots,r(i,n))$. The value of $\sigma_i(j)$ can be found by considering a pipe entering $(1,j)$ from the bottom-left and turning at elements of $O\cup A$. If this pipe contains an element of the form $(i,l)$, then the first such element passed by the pipe is $(i,\sigma_i(j))$. If the pipe does not contain such an element, then the last element passed by the pipe is $(\sigma_i(j),n)$.

\begin{definition}\label{tupledef}
For $1\le k\le n-1$ we call a tuple of pairwise distinct integers $(i_1,\dots,i_k)$ in $[1,n]$ an \textit{$(O,C)$-tuple} if 
the following hold:
\begin{itemize}
\item $\sigma_j(i_j)\ge j$ for $1\le j\le k$;
\item for any $1\le j<l\le k$ either $\sigma_{j+1}(i_j)=j$ or $\sigma_{j+1}(i_j)>\sigma_l(i_l)$.
\end{itemize}
\end{definition}

\begin{example}
For $O=P\backslash A$ one has $\sigma_j(l)=l+j-1$ for $l\le n+1-j$ and $\sigma_j(l)=n+1-l$ for $l>n+1-j$. The first condition in the definition then means that $i_j\le n+1-j$. For $l=j+1$ the second condition means that either $i_j=n+1-j$ (which implies $i_j>i_l$ via the first condition) or $i_j>i_l$, hence the tuple decreases. It is also seen that both conditions are satisfied for a decreasing tuple, thus $(P\backslash A,\varnothing)$-tuples are precisely the decreasing ones. 

For $O=\varnothing$ one has $\sigma_j(l)=l$ for all $j$, $l$. The second condition implies that all $i_j\neq j$ are ordered decreasingly. Moreover, all $i_j\neq j$ satisfy $i_j>i_k\ge k$, the latter by the first condition. We see that $(i_1,\dots,i_k)$ is a PBW tuple. It is also seen that both conditions are satisfied for a PBW tuple, thus $(\varnothing,P\backslash A)$-tuples are precisely the PBW tuples.
\end{example}

\begin{proposition}\label{tuples}
The set of tuples $(w^J(1),\dots,w^J(|J\cap A|))$ with $J\in\cJ$ and $|J\cap A|\in[1,n-1]$ coincides with the set of all $(O,C)$-tuples.
\end{proposition}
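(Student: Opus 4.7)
I plan to establish the two set inclusions via pipe-dream analysis. Throughout, for $J \in \cJ$ with $k = |J \cap A| \in [1, n-1]$ and $j \in [1, k]$, let $l_j^J = \max\{l : (j, l) \in M_{O,C}(J)\}$. The argument in the proof of Proposition~\ref{thetaJ} shows $w^J(j) = r(j, l_j^J)$, so $\sigma_j(w^J(j)) = l_j^J$.

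For the forward inclusion, fix $J$ and set $i_j = w^J(j)$. Condition~(i) of Definition~\ref{tupledef} is immediate since $(j, j) \in J \cap A$ forces $l_j^J \ge j$. For condition~(ii), I interpret $\sigma_{j+1}(i_j)$ via the $O$-pipe (turning only at $O \cup A$) entering $(1, i_j)$. Tracing this $O$-pipe into the triangle, it coincides with the pipe defining $r(j, l_j^J)$ up to the element $(j, l_j^J)$; beyond this point a case analysis yields three scenarios. If $(j, l_j^J) \in C$, the $O$-pipe continues straight through and enters row $j+1$ at $(j+1, l_j^J)$, so $\sigma_{j+1}(i_j) = l_j^J$; since $(j, l_j^J) \in M_{O,C}(J) \setminus (O \cup A)$ implies $(j, l_j^J) \in \max(J)$, we have $(j+1, l_j^J) \notin J$, hence $l_{l'}^J < l_j^J$ for every $l' \in (j,k]$ (else $(j+1, l_j^J) \preceq (l', l_{l'}^J) \in J$). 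If $(j, l_j^J) \in O \cup A$ and there exists $(j, l^*) \in O \cup A$ with $l^* > l_j^J$ (take the smallest), the $O$-pipe turns, traverses row $j$ to $(j, l^*)$ and enters row $j+1$ at $(j+1, l^*)$, so $\sigma_{j+1}(i_j) = l^*$; any $l' \in (j,k]$ with $l_{l'}^J \ge l^*$ would give $(j, l^*) \preceq (l', l_{l'}^J) \in J$, placing $(j, l^*) \in J \cap (O \cup A) \subseteq M_{O,C}(J)$ and contradicting $l_j^J$ being maximal. Otherwise the $O$-pipe exits at $(j, n)$, giving $\sigma_{j+1}(i_j) = j$. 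Condition~(ii) follows in each case.

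For the backward inclusion, given an $(O,C)$-tuple $(i_1, \ldots, i_k)$, set $l_j = \sigma_j(i_j) \ge j$ and let $J$ be the order ideal generated by $\{(j, l_j) : j \in [1, k]\}$. Then $|J \cap A| = k$: indeed $(k, k) \preceq (k, l_k) \in J$ and $(k+1, k+1) \not\preceq (j, l_j)$ for any $j \le k$. The crucial claim is $l_j^J = l_j$, which gives $w^J(j) = r(j, l_j) = i_j$. For $l_j^J \ge l_j$ we must show $(j, l_j) \in M_{O,C}(J)$: automatic when $(j, l_j) \in O \cup A$, while if $(j, l_j) \in C$ the same pipe-dream calculation yields $\sigma_{j+1}(i_j) = l_j$; since $l_j > j$ rules out the first alternative, condition~(ii) forces $l_{j'} < l_j$ for all $j' > j$, making $(j, l_j)$ maximal in $J$. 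For $l_j^J \le l_j$ we rule out $(j, l) \in M_{O,C}(J)$ with $l > l_j$: any such $(j, l) \in J$ satisfies $(j, l) \preceq (j', l_{j'})$ for some $j' > j$ with $l_{j'} \ge l$, so $(j+1, l) \in J$ and $(j, l) \notin \max(J)$; thus $(j, l) \in O \cup A$, forcing $(j, l_j) \in O \cup A$ as well (the case $(j, l_j) \in C$ having already yielded $l_{j'} < l_j < l$, incompatible with $l_{j'} \ge l$). But then the pipe-dream calculation gives $\sigma_{j+1}(i_j) \le l \le l_{j'}$, contradicting condition~(ii)'s second alternative while the first fails too.

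The main obstacle is the pipe-dream case analysis producing the three-way dichotomy for $\sigma_{j+1}(i_j)$ and connecting it to the fine structure of $M_{O,C}(J)$ in row $j$ and above. Once this dichotomy is established, both directions of the proposition follow from the arguments sketched above, and the resulting map $(i_1, \ldots, i_k) \mapsto J$ is automatically an inverse of $J \mapsto (w^J(1), \ldots, w^J(k))$.
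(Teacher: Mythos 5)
Your proof is correct and follows essentially the same approach as the paper: both directions rest on the identity $\sigma_j(w^J(j)) = \max\{l : (j,l) \in M_{O,C}(J)\}$ together with a three-way case analysis computing $\sigma_{j+1}(i_j)$ by tracing the $O$-pipe from $(1,i_j)$ as it enters and possibly traverses row $j$. Your version is slightly more careful at the boundary: you split cases as $(j,\sigma_j(i_j))\in C$ versus $\in O\cup A$ (with or without a larger $O$-element in row $j$), whereas the paper's first case is stated as $(j,\sigma_j(i_j))\notin O$, which would incorrectly absorb the sub-case $(j,\sigma_j(i_j))\in A$ where the claim ``$(j,\sigma_j(i_j))\in\max_\prec J$'' can fail; your split handles this cleanly, and you also spell out the converse direction (including the check that $|J\cap A|=k$ and the two inequalities $l_j^J\ge l_j$ and $l_j^J\le l_j$), which the paper leaves as ``similarly to the above.''
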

\begin{proof}
Let us show that every $(w^J(1),\dots,w^J(|J\cap A|))=(i_1,\dots,i_k)$ is an $(O,C)$-tuple. Note that $\sigma_j(i_j)$ is equal to the largest $l$ for which $(j,l)\in M_{O,C}(J)$, this immediately provides the first condition. 
Choose $1\le j<l\le |J\cap A|$. If $(j,\sigma_j(i_j))\notin O$, then $(j,\sigma_j(i_j))\in\max_\prec J$. We obtain $\sigma_{j+1}(i_j)=\sigma_j(i_j)>\sigma_l(i_l)$: the equality by $(j,\sigma_j(i_j))\notin O$ and the inequality by $(j,\sigma_j(i_j))\in\max_\prec J$. If $(j,\sigma_j(i_j))\in O$ and there exists $m>\sigma_j(i_j)$ with $(j,m)\in O$, then $\sigma_{j+1}(i_j)=m$ for the minimal such $m$. Since $(j,m)\notin J$, we must have $\sigma_l(i_l)<m$. Finally, if $(j,\sigma_j(i_j))\in O$ and there is no $m>\sigma_j(i_j)$ with $(j,m)\in O$, then $\sigma_{j+1}(i_j)=j$.

Conversely, let $(i_1,\dots,i_k)$ be an $(O,C)$-tuple and let $J$ be the minimal order ideal containing all $(j,\sigma_j(i_j))$ (which lie in $P$ by the first condition). We claim that $w^J(j)=i_j$ for all $j\le k$. Indeed, we are to show that $\sigma_j(i_j)$ is equal to the largest $l$ for which $(j,l)\in M_{O,C}(J)$. This is deduced from the second condition similarly to the above by considering the same three cases for the element $(j,\sigma_j(i_j))$.
\end{proof}

In particular, we see that the set of $(O,C)$-tuples of length $k$ has size $n\choose k$ with each $k$-subset occurring once. This follows from $\psi$ establishing a bijection between the variables in $\bC[\cJ_k]$ and in $S_{(k)}$.

\begin{definition}
Consider a Young tableau $Y$ in English notation with $m$ columns and the $i$th column of height $k_i$. Let $Y_{i,j}$ denote the element in the $j$th cell from the top in the $i$th column. We say that $Y$ is \textit{$(O,C)$-semistandard} if 
\begin{itemize}
\item $(Y_{i,1},\dots,Y_{i,k_i})$ is an $(O,C)$-tuple for every $i\in[1,m]$ and
\item for any $i'\le i$ in $[1,m]$ and $j\in[1,k_i]$ there exists $j'\in[j,k_{i'}]$ for which $\sigma_{j'}(Y_{i',j'})\ge\sigma_j(Y_{i,j})$.
\end{itemize}
\end{definition}

\begin{example}
Consider the case $n=4$ and $\bd=(2,3)$. Let $O=\{(1,2),(1,4),(2,3)\}$, i.e.\ the black elements in~\eqref{pipedreamfig} that lie in $P\backslash A$. We have $\sigma_1=(1,2,3,4)$, $\sigma_2=(2,4,3,1)$ and $\sigma_3=(3,4,2,1)$. The one-column $(O,C)$-semistandard tableaux of heights 2 and 3 are the $(O,C)$-tuples which can be found using either Definition~\ref{tupledef} or Proposition~\ref{tuples}:
\begin{center}
\begin{ytableau}
2\\
1\\
\end{ytableau}
\quad
\begin{ytableau}
3\\
1\\
\end{ytableau}
\quad
\begin{ytableau}
2\\
3\\
\end{ytableau}
\quad
\begin{ytableau}
4\\
1\\
\end{ytableau}
\quad
\begin{ytableau}
4\\
3\\
\end{ytableau}
\quad
\begin{ytableau}
4\\
2\\
\end{ytableau}
\quad
\begin{ytableau}
2\\
3\\
1\\
\end{ytableau}
\quad
\begin{ytableau}
4\\
3\\
1\\
\end{ytableau}
\quad
\begin{ytableau}
4\\
2\\
1
\end{ytableau}
\quad
\begin{ytableau}
4\\
3\\
2
\end{ytableau}
\end{center}
Below on the left we have three examples of tableaux which are $(O,C)$-semistandard and on the right three which are not.
\begin{center}
\begin{ytableau}
4&2&2&3\\
3&3&3&1\\
2&1
\end{ytableau}   
\ 
\begin{ytableau}
4&4&2&2\\
2&3&3&3\\
1
\end{ytableau}   
\ 
\begin{ytableau}
4&2&3\\
2&3&1
\end{ytableau}   
\quad\quad
\begin{ytableau}
4&4&2\\
3&1&3\\
1
\end{ytableau}  
\ 
\begin{ytableau}
2&4\\
3&1
\end{ytableau} 
\  
\begin{ytableau}
4&3&4\\
3&2&2\\
2
\end{ytableau}  
\end{center}
\end{example}

\begin{example}
The $(P\backslash A,\varnothing)$-semistandard tableaux are a dualized version of semistandard Young tableaux: those in which elements decrease strictly in every column from top to bottom and decrease non-strictly in every row from left to right. The $(\varnothing,P\backslash A)$-semistandard tableaux are the PBW-semistandard tableaux introduced in~\cite{Fe}: those for which the elements in every column form a PBW tuple and for any $2\le i\le m$ and $1\le j\le k_i$ there exists $j\le j'\le k_{i-1}$ for which $Y_{i-1,j'}\ge Y_{i,j}$
\end{example}

\begin{theorem}\label{standardmain}
The $(O,C)$-standard monomials are precisely the monomials \[X_{i^1_1,\dots,i^1_{k_1}}\dots X_{i^m_1,\dots,i^m_{k_m}}\in S_\bd\] with $k_1\ge\dots\ge k_m$ for which the Young tableau $Y$ with $Y_{j,l}=i^j_l$, i.e\ with $j$th column $(i^j_1,\dots,i^j_{k_j})$, is $(O,C)$-semistandard. In particular, the set of such monomials projects to a basis in the Pl\"ucker algebra $R_\bd$. 
\end{theorem}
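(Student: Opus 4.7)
The plan is to convert the combinatorial description of monomials outside $I^M_\bd$ into the language of Young tableaux via $\psi$ and Proposition~\ref{tuples}. By definition the $(O,C)$-standard monomials are the images $\psi(N)$ for $N$ a monomial outside $I^M_\bd$; since $I^M_\bd$ is generated by the $X_{J_1}X_{J_2}$ with incomparable $J_1,J_2$, such $N$ are precisely the products $X_{J_1}\cdots X_{J_m}$ whose index set forms a multichain in $\cJ_\bd$. I would arrange the factors so that $J_1\supseteq J_2\supseteq\cdots\supseteq J_m$ and set $k_i=|J_i\cap A|\in\bd$, giving $k_1\ge\cdots\ge k_m$. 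Then $\psi(N)=\prod_i X_{w^{J_i}(1),\ldots,w^{J_i}(k_i)}$, and defining a tableau $Y$ by $Y_{i,l}=w^{J_i}(l)$ yields a Young tableau of shape $(k_1,\ldots,k_m)$ whose columns, by Proposition~\ref{tuples}, are $(O,C)$-tuples; this takes care of the first half of the $(O,C)$-semistandardness requirement.

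The heart of the argument is the following equivalence, which I would isolate as a lemma: for $J,J'\in\cJ$ with $k=|J\cap A|\ge k'=|J'\cap A|$, one has $J\supseteq J'$ if and only if for every $j\in[1,k']$ there exists $j'\in[j,k]$ with $\sigma_{j'}(w^J(j'))\ge\sigma_j(w^{J'}(j))$. The proof of Proposition~\ref{tuples} identifies $\sigma_j(w^J(j))$ as the largest $l$ with $(j,l)\in M_{O,C}(J)$, so in particular $(j,\sigma_j(w^J(j)))\in J$ and $J$ is the minimal order ideal containing all these points. For the forward direction, $(j,\sigma_j(w^{J'}(j)))\in J'\subseteq J$ lies below some $\prec$-maximal element $(j',l')$ of $J$; since $(j',j')\preceq(j',l')$, one has $(j',j')\in J\cap A=\{(1,1),\ldots,(k,k)\}$, forcing $j'\in[j,k]$, and since $\max_\prec J\subseteq M_{O,C}(J)$ one gets $\sigma_{j'}(w^J(j'))\ge l'\ge\sigma_j(w^{J'}(j))$. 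For the reverse direction, the hypothesis places each generator $(j,\sigma_j(w^{J'}(j)))$ of $J'$ below the point $(j',\sigma_{j'}(w^J(j')))\in J$ in the product order, so downward closure of $J$ yields $J'\subseteq J$. Applying this lemma pairwise translates the chain condition on $(J_1,\ldots,J_m)$ into the second axiom of $(O,C)$-semistandardness for $Y$; conversely, Proposition~\ref{tuples} reconstructs a multichain from any $(O,C)$-semistandard tableau whose column heights lie in $\bd$.

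The basis statement is then immediate: the proposition preceding the theorem identifies the monomials outside $I^M_\bd$ as a basis of $\bC[\cJ_\bd]/I^{O,C}_\bd=R^{O,C}_\bd$, Theorem~\ref{degenmain} identifies this quotient via $\psi$ with $S_\bd/\initial_{\lessdot^\varphi}I_\bd$, and initial ideals preserve the dimensions of graded components, so the images of the $(O,C)$-standard monomials descend to a basis in $R_\bd=S_\bd/I_\bd$. The main obstacle is the two-sided equivalence above; its forward direction relies crucially on the rigid structure of $J\cap A$ in the Gelfand--Tsetlin poset to bound $j'\le k$, and the reverse direction hinges on the fact that the elements $(j,\sigma_j(w^J(j)))$ generate $J$ as an order ideal, both of which are supplied by Proposition~\ref{tuples}.
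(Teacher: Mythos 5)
Your proof is correct and follows essentially the same route as the paper: reduce to the two-ideal equivalence $J'\subseteq J\iff(\forall j\le k')(\exists j'\in[j,k])\,\sigma_{j'}(w^J(j'))\ge\sigma_j(w^{J'}(j))$, and then appeal to the bijection from Proposition~\ref{tuples} (under which each $J$ is the down-closure of the points $(j,\sigma_j(w^J(j)))$). The one cosmetic difference is that in the forward direction you go via a $\prec$-maximal element of $J$ rather than directly via the generating set of $J$, which yields the bound $j'\le k$ by a slightly longer route, but the argument is sound and equivalent.
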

\begin{proof}
We are to show that for $J_1,J_2\in\cJ$  with $|J_1\cap A|\ge|J_2\cap A|$ the two-column tableau with columns $(w^{J_1}(1),\dots,w^{J_1}(|J_1\cap A|))$ and $(w^{J_2}(1),\dots,w^{J_2}(|J_2\cap A|))$ is $(O,C)$-semistandard if and only if $J_2\subset J_1$. The theorem will then follow from Proposition~\ref{tuples} and the preceding discussion. However, every $J\in\cJ$ can be characterized as the minimal order ideal containing all $(j,\sigma_j(w^J(j)))$ with $j\le |J\cap A|$. Hence, $J_2\subset J_1$ if and only if for every $1\le j\le |J_2\cap A|$ there exists $j\le j'\le |J_1\cap A|$ with $(j',\sigma_{j'}(w^{J_1}(j')))\succeq (j,\sigma_j(w^{J_2}(j)))$, i.e.\ $\sigma_{j'}(w^{J_1}(j'))\ge \sigma_j(w^{J_2}(j))$.
\end{proof}

We also show that the structure of the lattice polytope can be recovered from the set of $(O,C)$-semistandard tableaux. Consider the set $\overline P=\{(i,j)|i\in[1,n-1],j\in[1,n]\}$, for $x\in\bZ^{\overline P}$ we have a monomial $z^x\in T^\pm$. 
For a Young tableau $Y$ let $x(Y)\in\bR^{\overline P}$ be the point for which $x(Y)_{i,j}$ is the number of elements equal to $j$ in row $i$ of $Y$.
\begin{proposition}\label{tableauxpolytope}
For an integral dominant weight $\la=(a_1,\dots,a_{n-1})$ the polytope $\cO_{O,C}(\la)$ is unimodularly equivalent to the convex hull of points $x(Y)$ over all $(O,C)$-semistandard tableaux $Y$ of shape $\la$, i.e.\ with exactly $a_i$ columns of height $i$.
\end{proposition}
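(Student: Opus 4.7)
The plan is to use the monomial automorphism $\theta$ of $T^\pm$ from Section~\ref{toric}. Since $\theta$ sends each variable to a Laurent monomial and is an automorphism, passing to exponents gives a $\bZ$-linear automorphism $\xi$ of $\bZ^{[1,n]\times[1,n]}$, and I claim its restriction to the affine hull of $\cO_{O,C}(\la)$ witnesses the desired unimodular equivalence. From the definition of $\theta$ one immediately sees that $\xi$ preserves the first index of $(i,j)$-coordinates, and that the only basis vector sent outside $\bR^{\overline P}$ is the one indexed by $(n,n)$; since every $x\in\cO_{O,C}(\la)$ satisfies $x_{n,n}=\la(n)=0$, this means $\xi$ does send $\cO_{O,C}(\la)$ into $\bR^{\overline P}$.

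The main computation is on the integer vertices $x_{O,C}(J)$ of the $\cO_{O,C}(\om_k)$. Reading off exponents in Proposition~\ref{thetaJ} yields $\xi(x_{O,C}(J))=x(Y_J)$, where $Y_J$ is the one-column tableau whose column is the $(O,C)$-tuple $(w^J(1),\dots,w^J(|J\cap A|))$. By additivity of $\xi$, a Minkowski sum $\sum_c x_{O,C}(J_c)$ coming from a chain $J_m\subset\dots\subset J_1$ is sent to $x(Y)$, where $Y$ is the associated $(O,C)$-semistandard tableau supplied by the chain-to-tableau correspondence in the proof of Theorem~\ref{standardmain}. I then close the loop by counting: Proposition~\ref{minkowski} shows every integer point of $\cO_{O,C}(\la)$ is some such Minkowski sum, with $|J_c\cap A|$ matching the column heights of $\la$, while Corollary~\ref{intpoints} combined with Theorem~\ref{standardmain} gives $\dim V_\la$ as the common cardinality of $\cO_{O,C}(\la)\cap\bZ^P$ and of the set of $(O,C)$-semistandard tableaux of shape $\la$. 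Because $\xi$ is injective and its image on $\cO_{O,C}(\la)\cap\bZ^P$ already contains every $x(Y)$, these cardinalities force the equality $\xi(\cO_{O,C}(\la)\cap\bZ^P)=\{x(Y):Y\text{ is }(O,C)\text{-semistandard of shape }\la\}$.

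To pass from integer points to polytopes, I use that $\cO_{O,C}(\la)$ is a Minkowski sum of the lattice polytopes $\cO_{O,C}(\om_k)$ and is therefore itself a lattice polytope, so $\cO_{O,C}(\la)=\mathrm{conv}(\cO_{O,C}(\la)\cap\bZ^P)$; applying the linear map $\xi$ then yields $\xi(\cO_{O,C}(\la))=\mathrm{conv}\{x(Y):Y\text{ is }(O,C)\text{-semistandard of shape }\la\}$, the polytope in the statement. Because $\xi$ comes from an automorphism of $T^\pm$, its restriction to $\la+\bZ^{P\setminus A}$ is a $\bZ$-linear affine isomorphism onto the image affine lattice, establishing the unimodular equivalence. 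The main subtlety I anticipate is ambient-lattice bookkeeping: $\cO_{O,C}(\la)$ sits in the coordinate subspace $\{x_{i,i}=\la(i)\}$ of $\bR^P$ while the tableau polytope sits in the larger space $\bR^{\overline P}$, so one must verify that $\xi(\la+\bZ^{P\setminus A})$ coincides with the affine lattice span of the tableau polytope rather than being a proper sublattice. The bijection of integer points established above is exactly what secures this, but making the lattice identification fully rigorous requires carefully tracking $\theta$'s automorphism property on the coordinate subspaces actually involved.
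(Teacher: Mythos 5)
Your overall strategy matches the paper's: both pass through the unimodular operator induced by $\theta$ (the paper calls it $\zeta$, you write $\xi$), both use Proposition~\ref{thetaJ} / Proposition~\ref{initialD} to compute its effect on the vertices $x_{O,C}(J)$, and both use the additivity under Minkowski sums. However, your final counting step has a genuine gap.

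You argue: $\{x(Y)\} \subseteq \xi(\cO_{O,C}(\la)\cap\bZ^P)$, the right-hand side has cardinality $\dim V_\la$ because $\xi$ is injective, the number of $(O,C)$-semistandard tableaux of shape $\la$ is $\dim V_\la$, therefore the sets are equal. But this does not follow: $\{x(Y)\}$ is the image of the set of tableaux under $Y\mapsto x(Y)$, and you never show this map is injective. If two distinct $(O,C)$-semistandard tableaux $Y_1\ne Y_2$ had the same row-multiplicity profile $x(Y_1)=x(Y_2)$, then $|\{x(Y)\}|<\dim V_\la$ and the inclusion would be strict. Note also that the reverse inclusion is not available directly from Proposition~\ref{minkowski}: an arbitrary integer point decomposes as $\sum_c x_{O,C}(J_c)$, but the $J_c$'s in that decomposition need not form a chain, so the resulting tableau need not be $(O,C)$-semistandard.

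The paper avoids this by working inside the graded piece $\initial_\lessdot R_\bd[\la]$ rather than just with lattice-point counts. It observes that the monomials $\varphi_\lessdot(M)=z^{x(Y)}$, as $M$ ranges over $(O,C)$-standard monomials of grade $\la$, form a \emph{basis} of $\initial_\lessdot R_\bd[\la]$ (this uses $\psi$'s identification of $R^{O,C}_\bd$ with $\initial_\lessdot R_\bd$ and the standard-monomial basis of $R^{O,C}_\bd$). Linear independence of a collection of monomials automatically means they are distinct, which is exactly the missing injectivity of $Y\mapsto x(Y)$. Then it compares with the second basis $\{\theta(z^x)\}_{x\in\cO_{O,C}(\la)\cap\bZ^P}$ from the proof of Proposition~\ref{thetasagbi}; two monomial bases of the same finite-dimensional space must coincide as sets, which gives $\{x(Y)\}=\zeta(\cO_{O,C}(\la)\cap\bZ^P)$ outright. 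To repair your argument, you should either import this ``basis of monomials, hence pairwise distinct'' step, or give an independent combinatorial proof that $Y\mapsto x(Y)$ is injective on $(O,C)$-semistandard tableaux; as written the equality is asserted but not established.

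Your closing remarks about ambient lattices and affine spans are sensible but secondary once the set equality is secured: $\xi$ restricted to the integer lattice of the affine hyperplane $\{x_{i,i}=\la(i)\}$ is visibly a lattice isomorphism onto its image because $\theta$ is a monomial automorphism (the paper records the unitriangularity of the relevant matrix explicitly in Section~\ref{monbasis}).
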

\begin{proof}
By the definition of $\theta$ we have a unimodular operator $\zeta$ on $\bR^{\overline P}$ such that $\theta(z^x)=z^{\zeta(x)}$ for $x\in\bZ^P\subset\bZ^{\overline P}$ (see also the next section).
Assume $\la$ has signature $\bd$. Monomials $\varphi_\lessdot(M)$ such that $M$ is $(O,C)$-standard and $\grad M=\la$ compose a basis in $\initial_\lessdot R_\bd[\la]$. 
For such an $M=X_{i^1_1,\dots,i^1_{k_1}}\dots X_{i^m_1,\dots,i^m_{k_m}}$ consider the tableau $Y$ with $Y_{j,l}=i^j_l$. Then $Y$ is $(O,C)$-semistandard, has shape $\la$ and $\varphi_\lessdot(M)=z^{x(Y)}$ by Proposition~\ref{initialD}. Therefore, $\initial_\lessdot R_\bd[\la]$ is spanned by $z^{x(Y)}$ with $Y$ an $(O,C)$-semistandard tableau of shape $\la$. We have also seen (proof of Proposition~\ref{thetasagbi}) that this space is spanned by monomials $\theta(z^x)=z^{\zeta(x)}$ with $x\in\cO_{O,C}(\la)\cap \bZ^P$. Hence the convex hull in consideration equals $\zeta(\cO_{O,C}(\la))$.
\end{proof}

\begin{remark}
To conclude this section let us note that the results in~\cite{FM} can be applied to describe the degenerations of $F_\bd$ intermediate between the toric and the monomial one. Consider an ideal $I'$ such that $I'$ is an initial ideal of $I^{C,O}_\bd$ and $I^M_\bd$ is an initial ideal of $I'$. Such ideals are parametrized by faces of the maximal cone corresponding to $I^M_\bd$ in the Gr\"obner fan of $I^{C,O}_\bd$. Note that $\psi(I')$ is an initial ideal of $\psi(I^{C,O}_\bd)$ and of $I_\bd$, hence the zero set of $\psi(I')$ in $\bP_\bd$ is a flat degeneration of $F_\bd$. This zero set is isomorphic to the zero set of $I'$ in $\bP^{O,C}_\bd$ and such zero sets are described by~\cite[ Theorem 3.3.1]{FM}. It is semitoric with each of its component the toric variety of a certain \textit{marked relative poset polytope}. This is a family of poset polytopes generalizing MCOPs each of which is defined by an order $\prec'$ on $P$ weaker than $\prec$ (rather than by a partition $(O,C)$).
\end{remark}

\section{Monomial bases in representations}\label{monbasis}

Choose an integral dominant weight $\la=(a_1,\dots,a_{n-1})$ with signature $\bd$. The goal of this section is to construct a monomial basis in $V_\la$ parametrized by the integer points in (a unimodular transform of) $\cO_{O,C}(\la)$. This will be done by applying Theorem~\ref{triangularbasis}, however, it is easily seen that already in the case $O=P$ the total monomial order $\lessdot$ on $T$ defined in Section~\ref{toric} is not triangular. Thus, our first goal is to obtain a triangular order from it.

The permutation group $\mathcal S_n$ acts on $S_\bd$ by $w(X_{i_1,\dots,i_k})=X_{w(i_1),\dots,w(i_k)}$ and on $T$ by $w(z_{i,j})=z_{i,w(j)}$ for $w\in\mathcal S_n$. It is well known that the ideal $I$ and the subalgebra $R_\bd$ are preserved by these actions. Recall the order $\lessdot$ introduced in Definition~\ref{orderdef}. For $w\in\mathcal S_n$ we consider a monomial order $\lessdot^w$ on $T$ by setting $w(M_1)\lessdot^w w(M_2)$ if and only if $M_1\lessdot M_2$. 
Let $\tau$ denote the permutation $(w^P)^{-1}=(w_O)^{-1}=\sigma_n$.

\begin{proposition}\label{permutedtriangular}
The order $\lessdot^\tau$ is triangular.
\end{proposition}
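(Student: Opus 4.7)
The plan is to verify each of the three defining conditions of triangularity for $\lessdot^\tau$: totality, that the determinants $D_{i_1,\dots,i_k}$ remain a sagbi basis of $R$, and that each $\initial_{\lessdot^\tau} D_{i_1,\dots,i_k}$ is a monomial involving only $z_{i,j}$ with $i\le j$.

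Totality is immediate: $\tau$ acts by a bijection on monomials of $T$, and $\lessdot^\tau$ is defined by pulling back along this bijection. For the sagbi property, I would use the $\mathcal S_n$-symmetry of $R$. The map $\tau(z_{i,j})=z_{i,\tau(j)}$ is an algebra automorphism of $T$ preserving $R$ and sending each $D_{i_1,\dots,i_k}$ to $\pm D_{j_1,\dots,j_k}$ for $\{j_1<\dots<j_k\}=\{\tau(i_1),\dots,\tau(i_k)\}$. The defining property of $\lessdot^\tau$ yields the identity
\[
\initial_{\lessdot^\tau}(p)=\tau\bigl(\initial_\lessdot(\tau^{-1}(p))\bigr)
\]
for every $p\in T$; applied to $R$ together with Proposition~\ref{thetasagbi}, this shows that $\{\initial_{\lessdot^\tau} D_\bullet\}$ generates $\tau(\initial_\lessdot R)=\initial_{\lessdot^\tau} R$.

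For the triangularity constraint, the same identity gives
\[
\initial_{\lessdot^\tau}\bigl(D_{j_1,\dots,j_k}\bigr)=\pm\,\tau\bigl(\initial_\lessdot D_{l_1,\dots,l_k}\bigr),
\]
where $(l_1,\dots,l_k)$ is the unique $(O,C)$-tuple with underlying set $\{\tau^{-1}(j_1),\dots,\tau^{-1}(j_k)\}$ (uniqueness comes from Proposition~\ref{tuples} and the observation that each $k$-subset of $[1,n]$ appears as the entry set of exactly one $(O,C)$-tuple of length $k$). Proposition~\ref{initialD} then evaluates this as $\pm z_{1,\tau(l_1)}\cdots z_{k,\tau(l_k)}$, so triangularity reduces to the inequality $\tau(l_i)\ge i$ for every $i\le k$ and every $(O,C)$-tuple $(l_1,\dots,l_k)$.

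The main obstacle is this combinatorial inequality. Since $\tau=(w^P)^{-1}$, the number $\tau(l_i)$ is precisely the starting row of the pipe in the pipe dream of $O\cup A$ that terminates at $(1,l_i)$. The key tool is a monotonicity lemma for pipe dreams: along any pipe in any pipe dream, the first coordinate (row) of the visited elements is non-increasing as one traverses from the start $(m,n)$ to the terminus $(1,w_M(m))$. This follows by direct inspection of the two steps available in the zigzag: a top-left move sends $(i,j)\to(i,j-1)$, leaving the row unchanged, whereas a bottom-left move sends $(i,j)\to(i-1,j)$, decreasing it by one. Consequently the pipe passes through row $i$ if and only if its starting row is at least $i$. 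Combined with the definition of $\sigma_i$ (the column of the first visited row-$i$ element if one exists, else the exit row), and the fact that any row-$i$ element of $P$ has second coordinate $\ge i$, this gives $\sigma_i(l_i)\ge i\iff\tau(l_i)\ge i$. Since the first $(O,C)$-tuple condition guarantees $\sigma_i(l_i)\ge i$, the required inequality follows.
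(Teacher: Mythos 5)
Your proof is correct. Both the sagbi transfer (via $\initial_{\lessdot^\tau}(p)=\tau(\initial_\lessdot(\tau^{-1}(p)))$) and the reduction to the inequality $\tau(l_i)\ge i$ are exactly what the paper does, and your chain of reasoning reaching $\tau(w^J(i))\ge i$ is sound. The one place you diverge is the final inequality: you prove it by a pipe-dream monotonicity lemma (rows are non-increasing along a pipe), which lets you show that the pipe ending at $(1,l_i)$ contains a row-$i$ element iff its starting row $\tau(l_i)$ is $\ge i$, and then you feed in the $(O,C)$-tuple condition $\sigma_i(l_i)\ge i$ from Proposition~\ref{tuples}. The paper is shorter and avoids Proposition~\ref{tuples} entirely: it notes $w^J(i)=r(i,j)$ for some $j\ge i$, observes that $\tau(r(i,j'))=j'$ for all $j'<i$ directly from the definitions ($r(i,j')=w_O(j')$ when $j'<i$ and $\tau=w_O^{-1}$), and concludes that since $(r(i,1),\dots,r(i,n))$ is a permutation, $\tau(r(i,j))$ for $j\ge i$ must land in $[i,n]$. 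Your geometric argument and the paper's algebraic one are two views of the same fact; the paper's is tighter, yours is more visual and arguably more self-explanatory about \emph{why} the inequality holds.
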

\begin{proof}
The fact that the determinants form a sagbi basis for $\lessdot^\tau$ is immediate from the analogous property of $\lessdot$. In view of Proposition~\ref{initialD} we are to show that $\tau(w^J(i))\ge i$ for any $J\in\cJ$ and $i\le|J\cap A|$. However, $w^J(i)=r(i,j)$ for some $j\ge i$. By the definitions of $r$ and $\tau$ we have $\tau(r(i,j'))=j'$ for any $j'\in[1,i-1]$, hence $\tau(r(i,j))\in[i,n]$.
\end{proof}

Before applying Theorem~\ref{triangularbasis} let us define the aforementioned unimodular transform of $\cO_{O,C}(\la)$. The fact that for $i\le j$ we have $\tau(r(i,j))\ge i$ implies that $\tau(\theta(z_{i,j}))=z^c$ for some $c\in\bZ^P$ (where $\tau$ is extended to $T^\pm$). In other words, $\tau\circ\theta$ preserves the subring $\bC[z_{i,j}^{\pm1}]_{(i,j)\in P}$. Furthermore, by the definitions of $\tau$ and $\theta$, both automorphisms of $T^\pm$ act on monomials by applying a linear operator to the degrees, i.e.\ we have a linear operator $\xi$ on $\bR^P$ such that $\tau(\theta(z^c))=z^{\xi(c)}$ for $c\in\bZ^P$. For $(i,j)\in P$ let $\varepsilon_{i,j}$ denote the basis vector $\mathbf 1_{\{(i,j)\}}\in\bR^P$ and if $i<j$, choose the largest $j'<j$ for which $(i,j')\in O\cup A$. Then $\xi$ can be written as 
\begin{equation}\label{xiexplicit}
\xi(\varepsilon_{i,j})=
\begin{cases}
\varepsilon_{i,\tau(r(i,j))}-\varepsilon_{i,\tau(r(i,j'))}\text{ if }i<j,\\
\varepsilon_{i,\tau(r(i,i))}\text{ if }i=j.
\end{cases}
\end{equation}
In particular, the matrix of $\xi$ is unitriangular in the basis consisting of $\varepsilon_{i,j}$ ordered by $\sigma_i\tau^{-1}(j)$. Hence, $\Pi_\la^{O,C}=\xi(\cO_{O,C}(\la))$ is a lattice polytope in $\bR^P$  unimodularly equivalent to $\cO_{O,C}(\la)$. For an alternative characterization of the polytope note that \[\Pi_\la^{O,C}=a_1\Pi_{\om_1}^{O,C}+\dots+a_{n-1}\Pi_{\om_{n-1}}^{O,C}\] and Proposition~\ref{thetaJ} implies that $\Pi_{\om_k}^{O,C}$ is the convex hull of 
\begin{equation}\label{piintpoint}
\{\varepsilon_{1,\tau w^J(1)}+\dots+\varepsilon_{k,\tau w^J(k)}\}_{J\in\cJ_k}.
\end{equation}

\begin{theorem}\label{basismain}
The set of vectors $f^cv_\la$ with $c\in \Pi_\la^{O,C}\cap\bZ^P$ is a basis in $V_\la$.
\end{theorem}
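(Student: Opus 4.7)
The plan is to deduce this directly from Theorem~\ref{triangularbasis} applied to the triangular monomial order $\lessdot^\tau$, whose triangularity is already guaranteed by Proposition~\ref{permutedtriangular}. The only thing to check is that the set of $c \in \bZ_{\ge 0}^P$ with $z^c \in \initial_{\lessdot^\tau} R[\la]$ is precisely $\Pi_\la^{O,C} \cap \bZ^P$; once this is done, Theorem~\ref{triangularbasis} immediately yields the claim.

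First I would observe that $\tau$ acts on $T$ as a ring automorphism which permutes variables within each row and therefore preserves the grading $\grad$. Since the Pl\"ucker subalgebra $R$ is $\mathcal{S}_n$-stable, this yields
\[
\initial_{\lessdot^\tau} R[\la] = \tau\bigl(\initial_\lessdot R[\la]\bigr).
\]
Next, from the proof of Proposition~\ref{thetasagbi} we already know that the graded component $\initial_\lessdot R[\la]$ has as basis the monomials $\theta(z^x) = z^{\zeta(x)}$ indexed by $x \in \cO_{O,C}(\la)\cap\bZ^P$. These are pairwise distinct because $\theta$ is an automorphism of $T^\pm$, and there are exactly $\dim V_\la$ of them by Corollary~\ref{intpoints}, matching $\dim \initial_\lessdot R[\la]$.

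Applying $\tau$ termwise and using the very definition of $\xi$, namely $\tau(\theta(z^x)) = z^{\xi(x)}$, I would conclude that $\initial_{\lessdot^\tau} R[\la]$ has basis $\{z^{\xi(x)} : x \in \cO_{O,C}(\la)\cap \bZ^P\}$. Since $\xi$ is unimodular, this set is the same as $\{z^c : c \in \Pi_\la^{O,C}\cap\bZ^P\}$. The triangularity of $\lessdot^\tau$ ensures that each such $z^c$ lies in $\bC[z_{i,j}]_{(i,j)\in P}$, i.e.\ that $\xi(x)\in\bZ_{\ge 0}^P$, which in particular confirms that the combinatorial polytope $\Pi_\la^{O,C}$ lies in the nonnegative orthant as required for the monomial $f^c$ to make sense.

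Plugging this description into Theorem~\ref{triangularbasis} gives the basis $\{f^c v_\la : c \in \Pi_\la^{O,C}\cap\bZ^P\}$ of $V_\la$. The only substantive point in the argument is the compatibility between the automorphism $\tau$ of $T$ (which permutes column indices of the minors) and the sagbi basis description of $\initial_\lessdot R[\la]$; this compatibility is essentially formal once one notes that $\tau$ acts on $R$ as a grading-preserving algebra automorphism and that $\tau(D_{i_1,\ldots,i_k}) = D_{\tau(i_1),\ldots,\tau(i_k)}$. The rest is assembly.
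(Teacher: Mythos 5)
Your proposal is correct and follows essentially the same route as the paper: reduce via Theorem~\ref{triangularbasis} and Proposition~\ref{permutedtriangular} to a description of $\initial_{\lessdot^\tau}R[\la]$, note $\initial_{\lessdot^\tau}R=\tau(\initial_\lessdot R)$ by $\mathcal S_n$-stability, and recognize that the proof of Proposition~\ref{thetasagbi} (which rests on Propositions~\ref{thetaJ} and~\ref{initialD}) already exhibits the basis $\{\theta(z^x):x\in\cO_{O,C}(\la)\cap\bZ^P\}$ of $\initial_\lessdot R[\la]$. Your remark that triangularity of $\lessdot^\tau$ guarantees $\xi(x)\in\bZ_{\ge0}^P$, so that $f^{\xi(x)}$ is well-formed, is a small but worthwhile addition the paper leaves implicit.
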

\begin{proof}
By Theorem~\ref{triangularbasis} and Proposition~\ref{permutedtriangular} it suffices to show that $\initial_{\lessdot^\tau}R[\la]$ is spanned by the set of $z^c$ with $c\in\Pi_\la^{O,C}\cap\bZ^P$. We have $\initial_{\lessdot^\tau}R_\bd=\tau(\initial_\lessdot R_\bd)$ and via an application of $\tau^{-1}$ we are to show that $\initial_\lessdot R[\la]$ is spanned by the set of $\theta(z^x)$ with $x\in\cO_{O,C}(\la)\cap \bZ^P$ which follows from Propositions~\ref{thetaJ} and~\ref{initialD}.
\end{proof}

\begin{example}\label{monbasisex}
For $O=\varnothing$ all $\tau(r(i,j))=j$ and from~\eqref{xiexplicit} one sees that $\xi(x)_{i,j}=x_{i,j}$ unless $(i,j)\in A$, i.e.\ $\Pi^{\varnothing,P\backslash A}_\la$ has the same projection onto $\bR^{P\backslash A}$ as the FFLV polytope $\cO_{\varnothing,P\backslash A}(\la)$. Hence, in this case the basis provided by Theorem~\ref{basismain} coincides with the FFLV basis $\{f^cv_\la\}_{c\in\cO_{\varnothing,P\backslash A}(\la)\cap \bZ^P}$ constructed in~\cite{FFL1}. Meanwhile, for $O=P\backslash A$ the map $\xi$ is more complicated and $\Pi^{P\backslash A,\varnothing}_\la$ is a transformed version of the Gelfand-Tsetlin polytope appearing in various forms in the papers~\cite{R,KM,M1} (see also~\cite[Section 14.4]{MS}). It consists of points $x\in\bR^P$ such that all $x_{i,j}\ge 0$, for any $i<j$ one has $\sum_{l=j}^n x_{i,l}-\sum_{l=j+1}^n x_{i+1,l}\le a_i$ and all $\sum_{j=i}^n x_{i,j}=\la(i)$ (see~\cite[Section 2]{M1}). The resulting monomial basis is considered in~\cite{R,M1,MY}, it is also a subset of the Chari--Loktev basis~\cite{ChL} in a local Weyl module over $\fsl_n(\bC)[[t]]$.
\end{example}

\begin{remark}
In~\cite{Fu} it is shown that every $\cO_{O,C}(\la)$ can be realized as a Newton--Okounkov body of the complete flag variety $F=F_{(1,\dots,n-1)}$. This statement can be straightforwardly deduced from the above construction. One of the standard ways of defining a Newton--Okounkov body is via a choice of a line bundle $\cL$ on $F$, a global section $t$ of $\cL$ and a valuation $\nu$ on the field $\bC(F)$ (see~\cite{KaKh,Ka}, this is also the language used in~\cite{Fu}). In these terms we consider the equivariant line bundle $\cL_\la$ on $F_\bd$, an arbitrary section $t$ and the \textit{highest term valuation} defined by $\lessdot^\tau$. The latter means that we identify $\bC(F)$ with $\bC(z_{i,j})_{1\le i<j\le n}$ by viewing $z_{i,j}$ as the coordinate on the open Schubert cell corresponding to root $\alpha_{i,j}$ and set $z^{\nu(f)}=\initial_{\lessdot^\tau} f$ for any polynomial $f$. Then for $m\in\bZ_{\ge0}$ the convex hull of \[\left\{\frac{\nu(s/t^{\otimes m})}{m}\left|s\in H^0(F,\cL_\la^{\otimes m})\right.\right\}\] is a translate of $\Pi^{O,C}_\la$ independent of $m$, i.e.\ the respective Newton-Okounkov body is identified with $\cO_{O,C}(\la)$ by a suitable change of coordinates.
\end{remark}

\begin{remark}
A basis in $V_\la$ parametrized by the lattice points of $\cO_{O,C}(\la)$ is also constructed in~\cite[Theorem 6.8]{Fu}. This basis is essentially different from the one given by Theorem~\ref{basismain} as seen already for $n=3$, $\la=(1,1)$ and $C=\varnothing$. It would be interesting to study the transition matrix between the two bases.
\end{remark}

\section{Addendum: semi-infinite Grassmannians}

In this addendum we show how the above techniques can be applied in the semi-infinite setting to obtain a family of toric degenerations of the semi-infinite Grassmannian.

Choose integers $n>k\ge 1$ and consider the rings \[S_\infty=\bC[X_{i_1,\dots,i_k}^{(l)}]_{1\le i_1<\dots<i_k\le n,\, l\ge 0}\] and \[T_\infty=\bC[z_{i,j}^{(l)}]_{i\in[1,k],\, j\in[1,n],\, l\ge 0}.\] We write $z_{i,j}(t)$ to denote the power series $\sum_{l\ge 0}z_{i,j}^{(l)}t^l$ and let $D_{i_1,\dots,i_k}(t)$ denote the minor of the matrix $(z_{i,j}(t))_{i\in[1,k],j\in[1,n]}$ spanned by columns $i_1<\dots<i_k$. Let $D_{i_1,\dots,i_k}^{(l)}$ denote the coefficient of $t^l$ in the power series $D_{i_1,\dots,i_k}(t)$ and $\varphi_\infty:S_\infty\to T_\infty$ be given by \[\varphi_\infty\left(X_{i_1,\dots,i_k}^{(l)}\right)=D_{i_1,\dots,i_k}^{(l)}.\] The kernel of $\varphi_\infty$ is the \textit{semi-infinite Pl\"ucker ideal} $I_\infty$ and the image of $\varphi_\infty$ is the \textit{semi-infinite Pl\"ucker algebra} $R_\infty$ (the subalgebra generated by all $D_{i_1,\dots,i_k}^{(l)}$). 
The variety of infinite type $\Proj R_\infty$ is known as the \textit{semi-infinite Grassmannian} (due to~\cite{FeFr}) or the \textit{quantum Grassmannian} (due to~\cite{So}). Similarly to the finite case we have the following.

\begin{definition}
For a monomial order $<$ on $T_\infty$ let $\varphi_<:S_\infty\to T_\infty$ denote the map given by $\varphi_<(X_{i_1,\dots,i_k}^{(l)})=\initial_< D_{i_1,\dots,i_k}^{(l)}$. Let $<^\varphi$ denote the monomial order on $S_\infty$ given by $M_1<^\varphi M_2$ if and only if $\varphi_<(M_1)<\varphi_<(M_2)$, i.e.\ the inverse image of $<$ under $\varphi_<$.
\end{definition}

\begin{proposition}\label{infidealsubalg}
Consider a monomial order $<$ on $T_\infty$ for which the $D_{i_1,\dots,i_k}^{(l)}$ form a sagbi basis of $R_\infty$. Then $S_\infty/\initial_{<^\varphi} I_\infty$ is isomorphic to $\initial_< R_\infty$.
\end{proposition}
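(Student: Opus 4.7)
The plan is to reuse the argument of Proposition~\ref{idealsubalg} essentially verbatim, following the author's own remark that its proof adapts to any monomial map between polynomial rings whose kernel is homogeneous with finite-dimensional graded components. The only real work in the semi-infinite setting is to exhibit such a grading.

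First I would equip $S_\infty$ and $T_\infty$ with the bigrading $\grad$ defined by $\grad X^{(l)}_{i_1,\dots,i_k} = (k,l)$ and $\grad z^{(l)}_{i,j} = (1,l)$. Since $D^{(l)}_{i_1,\dots,i_k}$ is a sum of monomials, each consisting of exactly $k$ factors $z^{(l_j)}_{i,i_j}$ with $\sum_j l_j = l$, the map $\varphi_\infty$ is $\grad$-homogeneous. Each graded component is finite-dimensional: the first coordinate caps the number of factors in any monomial, and the second bounds the largest upper index that can occur, leaving only finitely many eligible variables. In particular $I_\infty$, $\ker\varphi_<$ and $\initial_{<^\varphi} I_\infty$ are all $\grad$-graded with finite-dimensional components, and the standard Gr\"obner-theoretic fact that passing to an initial subspace preserves graded dimensions applies verbatim.

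With this in place I would run the finite-case argument. The sagbi basis hypothesis is, by definition, $\varphi_<(S_\infty) = \initial_< R_\infty$, so it suffices to prove $\initial_{<^\varphi} I_\infty = \ker\varphi_<$. For the inclusion $\subset$: given $p \in I_\infty$, the monomials appearing in $\initial_{<^\varphi} p$ are precisely those monomials of $p$ whose $\varphi_<$-image attains the $<$-maximum, so they all collapse to a single monomial $N^*$ of $T_\infty$; since $\varphi_\infty(p) = 0$ and $N^*$ is the $<$-leading monomial of $\varphi_\infty(p)$, its coefficient must vanish, giving $\varphi_<(\initial_{<^\varphi} p) = 0$.

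For the reverse inclusion I would argue by componentwise dimension count. Using that initials preserve graded dimensions and that $\varphi_<$ is surjective onto $\initial_< R_\infty$,
\[\dim(\initial_{<^\varphi} I_\infty)[\la] = \dim I_\infty[\la] = \dim S_\infty[\la] - \dim R_\infty[\la] = \dim S_\infty[\la] - \dim(\initial_< R_\infty)[\la] = \dim(\ker\varphi_<)[\la].\]
Combined with the established inclusion and finite-dimensionality of each component, this forces equality in every grading, and $\varphi_<$ descends to the desired isomorphism $S_\infty/\initial_{<^\varphi} I_\infty \xrightarrow{\sim} \initial_< R_\infty$. The only real obstacle is the bookkeeping needed to verify that the bigrading has finite-dimensional components; once this is done, the rest of the proof is strictly parallel to the finite case.
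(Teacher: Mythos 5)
Your proposal is correct and takes essentially the same approach as the paper: the paper's proof of Proposition~\ref{infidealsubalg} consists precisely of the remark that the argument of Proposition~\ref{idealsubalg} carries over once one equips $S_\infty$ and $T_\infty$ with the bigrading $\grad_\infty X^{(l)}_{i_1,\dots,i_k}=k\oplus l$, $\grad_\infty z^{(l)}_{i,j}=1\oplus l$, which is exactly the grading you introduce. Your write-up simply spells out the finite-dimensionality check and the dimension count that the paper leaves implicit.
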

\begin{proof}
The proof repeats that of Proposition~\ref{idealsubalg} with the difference that, in order to have finite dimensional graded components, we consider a $\bZ\oplus\bZ$ grading given by $\grad_\infty X_{i_1,\dots,i_k}^{(l)}=k\oplus l$ and $\grad_\infty z_{i,j}^{(l)}=1\oplus l$.
\end{proof}

Initial ideals of $I_\infty$ and initial subalgebras of $R_\infty$ provide flat degenerations of the semi-infinite Grassannian, at least in the following simple setting.

\begin{definition}
A total monomial order $<$ on $T_\infty$ is a \textit{reverse lexicographic order} if for monomials $M_1=\prod (z_{i,j}^{(l)})^{b_{i,j}^{(l)}}$ and $M_2=\prod (z_{i,j}^{(l)})^{c_{i,j}^{(l)}}$ we have $M_1<M_2$ if and only if for the $<$-minimal $z_{i,j}^{(l)}$ with $b_{i,j}^{(l)}\neq c_{i,j}^{(l)}$ one has $b_{i,j}^{(l)}>c_{i,j}^{(l)}$.
\end{definition}

\begin{proposition}[{see~\cite[Section 2]{FMP}}]\label{infflatfamily}
Consider a reverse lexicographic order $<$ on $T_\infty$ for which the $D_{i_1,\dots,i_k}^{(l)}$ form a sagbi basis of $R_\infty$. There exists a flat family over $\bC$ with all fibers outside of 0 isomorphic to the semi-infinite Grassmannian $\Proj R_\infty$ and the fiber over 0 isomorphic to $\Proj(\initial_<R_\infty)=\Proj(S_\infty/\initial_{<^\varphi}I_\infty)$.
\end{proposition}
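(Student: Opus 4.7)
The plan is to adapt the classical Rees-algebra construction of a flat sagbi family (cf.\ Theorem~\ref{flatfamily}) to the semi-infinite setting, exploiting the $\bZ\oplus\bZ$-grading $\grad_\infty$ to reduce flatness to finite-dimensional graded components; this is essentially the approach of~\cite[Section 2]{FMP}. Concretely, the goal is to construct a $\bC[t]$-flat subalgebra $\tilde R$ of an extension of $T_\infty$ whose generic fiber is $R_\infty$ and whose fiber over $t=0$ is $\initial_< R_\infty$, and then apply $\Proj$.

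The first step is to use the reverse-lex hypothesis to produce an integer weight function $w$ on the variables $z_{i,j}^{(l)}$ with the property that on every finite-dimensional component $T_\infty[k\oplus l]$ the $w$-highest-weight part of any polynomial agrees with $\initial_<$. Such a $w$ exists because each $\grad_\infty$-component involves only the finitely many variables $z_{i,j}^{(l')}$ with $l'\le l$, all monomials in the component share the same total first-degree $k$, and reverse lex restricted to same-degree monomials in a finite variable set is always a weight order (indeed, any strictly monotonic integer-valued function on those variables with respect to $<$ will do). With such $w$ in hand, I would form the Rees-type $\bC[t]$-subalgebra $\tilde R$ generated by the one-parameter deformations $t^{-w(\initial_< D)}\widehat{D}$ of the determinants $D=D_{i_1,\dots,i_k}^{(l)}$, where $\widehat{D}$ is obtained from $D$ by the substitution $z_{i,j}^{(l')}\mapsto t^{w(z_{i,j}^{(l')})}z_{i,j}^{(l')}$. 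By construction, the fiber at $t=c\ne 0$ is isomorphic to $R_\infty$ via rescaling, and the fiber at $t=0$ is generated by the $\initial_< D$, which by the sagbi hypothesis equals $\initial_< R_\infty$.

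Flatness would be verified graded component by component: each $\tilde R[k\oplus l]$ is a finitely generated $\bC[t]$-module whose fiber dimensions at $t=0$ and at $t\ne 0$ both equal the common finite number $\dim R_\infty[k\oplus l]=\dim\initial_< R_\infty[k\oplus l]$, so it is $\bC[t]$-torsion-free, hence free and hence flat. Assembling these graded components gives flatness of $\tilde R$ over $\bC[t]$, and passing to $\Proj$ produces the desired flat family over $\bC$, with the special fiber identified via Proposition~\ref{infidealsubalg}. The main obstacle I foresee is the global consistency of the weight $w$: since revlex on infinitely many variables is not literally a weight order, one must lean on both $\grad_\infty$ (to restrict to finitely many variables at a time) and the specific reverse-lex hypothesis (so that the restriction really is a weight order) to produce a single integer weight $w$ realizing $\initial_<$ on every component simultaneously; this is exactly where the proof deviates from the finite analogue and relies on the hypotheses rather than on an arbitrary monomial order.
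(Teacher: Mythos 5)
The paper does not prove this proposition itself; it is cited to~\cite[Section 2]{FMP}, so there is no internal proof to compare against. Assessing your proposal on its own merits: the high-level plan (a Rees-type construction, with flatness checked on the finite-dimensional $\grad_\infty$-graded components) is the right sort of thing and is in the spirit of the cited reference, but there is a concrete error and a genuine gap in the central step.

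The parenthetical claim that ``any strictly monotonic integer-valued function on those variables with respect to $<$ will do'' to realize reverse lex on same-degree monomials is false. Take variables $x_1>x_2>x_3$ and the weight $(w_1,w_2,w_3)=(0,-1,-2)$, which is strictly monotonic. The degree-$2$ monomials $x_1x_3$ and $x_2^2$ both have weight $-2$, yet reverse lex declares $x_1x_3<x_2^2$ (the smallest variable where they differ is $x_3$, where $x_1x_3$ has the larger exponent). So this weight produces a tie where revlex does not, and for a polynomial such as $p=x_1x_3+x_2^2$ the $w$-initial form would differ from $\initial_<p$. What is true is that revlex restricted to degree-$d$ monomials in finitely many fixed variables is a weight order for \emph{some} weight (finitely many lattice points in a hyperplane can always be strictly separated by a generic linear functional), but that weight depends on $d$ and on the number of variables; one typically needs the ``gaps'' between successive $w$-values to grow geometrically with the degree.

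This is exactly where the proposal's unresolved global-consistency issue bites. You correctly flag that the hard part is to produce a single integer weight $w$ that realizes $\initial_<$ on every $\grad_\infty$-component simultaneously, but you treat this as a technicality that revlex plus the grading takes care of. It does not: as the first degree $k$ of the component $T_\infty[k\oplus m]$ grows without bound, the spread required in the weights to separate all degree-$k$ exponent vectors also grows without bound, and no single $\bZ$-valued weight on the infinitely many $z_{i,j}^{(l)}$ can accommodate all $k$ at once. A correct argument would have to either abandon a single global integer weight in favor of the filtration/term-order formulation of the Rees construction (as in the proof cited for the finite analogue, Theorem~\ref{flatfamily}, via~\cite{KNN}, which does not pass through a single weight), or exploit the revlex hypothesis structurally (e.g.\ via truncation to finitely many variables and a compatibility/limit argument) in a way that you only gesture at. As written, the crucial step is asserted rather than proved, and the explicit recipe offered for $w$ is incorrect.
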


We will be constructing toric degenerations of this form and proceed to define the corresponding combinatorial objects.

\begin{definition}\label{precdef}
Let $(Q,\prec)$ be the poset consisting of elements $(i,j)$ with $i\ge 1$, $j\ge k+1$ and $j-i\in[0,n-1]$. We set $(i_1,j_1)\preceq\ (i_2,j_2)$ if at least one of the following holds:
\begin{enumerate}[label=(\roman*)]
\item $i_1\le i_2$ and $j_1\le j_2$,
\item $i_2-i_1\ge k$ or
\item $j_2-j_1\ge n-k$.
\end{enumerate}
We write $\prec$ for the corresponding strict relation.
\end{definition}
\[
\begin{tikzcd}[row sep=tiny,column sep=tiny]
&&&(4,4)\arrow[dr]\arrow[dddd, dashed, gray] && (5,5)\arrow[dr] \arrow[dddd, dashed, gray]&&(6,6)\arrow[dr]\arrow[dddd, dashed, gray]\\
&&(3,4)\arrow[dr]\arrow[ur]&& (4,5)\arrow[dr]\arrow[ur] && (5,6)\arrow[dr]\arrow[ur] && \dots\\
&(2,4)\arrow[dr]\arrow[ur]&&  (3,5)\arrow[ur]\arrow[dr] && (4,6)\arrow[dr]\arrow[ur] && (5,7)\arrow[dr]\arrow[ur]\\
(1,4)\arrow[ur]\arrow[dr]&&(2,5)\arrow[dr]\arrow[ur] &&    (3,6)\arrow[dr]\arrow[ur] && (4,7)\arrow[dr]\arrow[ur]  && \dots\\
&(1,5)\arrow[ur]\arrow[uuuurr, dashed, gray]&&(2,6)\arrow[ur]\arrow[uuuurr, dashed, gray] && (3,7)\arrow[uuuurr, dashed, gray]\arrow[ur] && (4,8)\arrow[ur]\\
\end{tikzcd}
\]

The definition of $(Q,\prec)$ is due to~\cite[Section 5]{FMP}. Above one sees part of the Hasse diagram of $(Q,\prec)$ in the case $n=5$ and $k=3$ (with the longer arrows dashed for visibility). In general, the covering relations in $Q$ are of four types: 
\begin{enumerate}
\item $(i,j)\prec (i+1,j)$ for $i<j$, 
\item $(i,j)\prec (i,j+1)$ for $j-i<n-1$,
\item $(i,i+n-1)\prec (i+k,i+k)$ for $i\ge 1$ and 
\item $(i+k,i+k)\prec (i+1,i+n)$ for $i\ge 1$.
\end{enumerate}
To understand the nature of the relations of types (3) and (4) consider the set $H$ of pairs $(i+mk,j-m(n-k))$ with $(i,j)\in Q$ and $m\in\bZ$ equipped with a partial order such that $(i_1,j_1)\prec'(i_2,j_2)$ if and only if $i_1\le j_1$ and $i_2\le j_2$. The poset $(H,\prec')$ is acted upon by the group generated by the translation $(i,j)\mapsto (i+k,j-n+k)$ and $(Q,\prec)$ is seen to be the corresponding quotient poset. In particular, for any $(i,j)\in H$ there exists a unique $m\in\bZ$ such that $(i+mk,j-m(n-k))\in Q$. For $(i,j)\in H$ we will write $\langle i,j\rangle$ to denote the corresponding $(i+mk,j-m(n-k))\in Q$. In this notation covering relation (3) can be rewritten as $(i,j)\prec\al i,j+1\ar$ where $j=i+n-1$ and (4) can be rewritten as $(i,j)\prec\al i+1,j\ar$ where $j=i+n$. Hence all covering relations have form $(i,j)\prec\al i+1,j\ar$ or $(i,j)\prec\al i,j+1\ar$ for some $i,j\in\bZ$. We proceed to introduce an analog of pipe dreams for this setting.

\begin{definition}
For $a\in\bZ$ we understand $a\mod k$ to be an integer in $[1,k]$ and $a\mod(n-k)$ to be an integer in $[k+1,n]$. For $i,j\in\bZ$ let $\tilde s_{i,j}$ denote the transposition $(i\mod k,j\mod(n-k))\in\mathcal S_n$. For $M\subset Q$ let $w_M\in\mathcal S_n$ be the product $\tilde s_{i,j}$ over all $(i,j)\in M$ ordered so that $\tilde s_{i_1,j_1}$ is on the left of $\tilde s_{i_2,j_2}$ whenever $(i_1,j_1)\prec(i_2,j_2)$. (Note that $\tilde s_{i_1,j_1}$ and $\tilde s_{i_2,j_2}$ commute unless $i_1-i_2\in k\bZ$ or $j_1-j_2\in(n-k)\bZ$, hence they commute when $(i_1,j_1)$ and $(i_2,j_2)$ are incomparable.)
\end{definition}

While the finite poset $(P,\prec)$ is naturally realized as a set of points in the plane, the poset $(Q,\prec)$ is more naturally realized as a set of points in the plane modulo a translation, i.e.\ a set of points in a cylinder. This makes the diagrammatic intuition here more complicated and instead of defining pipe dreams in terms of polygonal curves we will use the more formal language of paths in the Hasse diagram (note that the polygonal curves above can be viewed as such paths). 
\begin{definition}\label{infpipedreamdef}
We call two edges in the Hasse diagram of $(Q,\prec)$ \textit{parallel} if either both correspond to covering relations of the form $(i,j)\prec\al i+1,j\ar$ or both correspond to covering relations of the form $(i,j)\prec\al i,j+1\ar$. A pipe for a subset $M\subset Q$ is a sequence $(i_1,j_1),\dots,(i_m,j_m)$ of elements of $Q$ with $m>1$ and the following properties.
\begin{enumerate}[label=(\alph*)]
\item $(i_l,j_l)$ covers $(i_{l+1},j_{l+1})$ for $l<m$, so the sequence is a path in the Hasse diagram (with arrows reversed).
\item For $l<m-1$ the edge connecting $(i_l,j_l)$ and $(i_{l+1},j_{l+1})$ is parallel to the edge connecting $(i_{l+1},j_{l+1})$ and $(i_{l+2},j_{l+2})$ if and only if $(i_{l+1},j_{l+1})\notin M$, i.e.\ the ``direction'' changes at elements of $M$.
\item The sequence is maximal in the sense that one may not choose $(i_{m+1},j_{m+1})\in Q$ so that the above is satisfied.
\end{enumerate}
\end{definition}

It is evident that a pipe is determined by its first two elements. Also note that $(i_m,j_m)$ covers no more than one element, hence either $i_m=1$ and $j_m\in[k+1,n]$ or $j_m=k+1$ and $i_m\in[1,k]$.

\begin{definition}
For a pipe $(i_1,j_1),\dots,(i_m,j_m)$ its \textit{value} is an integer in $[1,n]$ defined as follows. If $i_m>1$, then the value is $i_m$. If $j_m>k+1$, then the value is $j_m$. If $(i_m,j_m)=(1,k+1)$, then we have two possibilities. The value is 1 when $(1,k+1)\notin M$ and $(i_{m-1},j_{m-1})=(1,k+2)$ or when $(1,k+1)\in M$ and $(i_{m-1},j_{m-1})=(2,k+1)$. Otherwise, the value is $k+1$. We denote the value by $N_M((i_1,j_1),(i_2,j_2))$.
\end{definition}
One way of thinking about the value is that it is determined by the last element of the pipe together with the ``direction'' in which the pipe would leave this element if it were to continue beyond it. Specifically, add a vertex $(i_{m+1},j_{m+1})$ to the Hasse diagram where $i_{m+1}$ and $j_{m+1}$ are just formal symbols rather than numbers. Let $(i_{m+1},j_{m+1})$ be adjacent to $(i_m,j_m)$ and formally assign the edge between them to one of the two parallelity classes in such a way that property (b) in the definition of a pipe is satisfied for $l=m-1$. If the edge is parallel to edges connecting $(i,j)$ with $(i+1,j)$, then the pipe's value is $j_m$. If the edge is parallel to edges connecting $(i,j)$ with $(i,j+1)$, then the pipe's value is $i_m$.

\begin{lemma}\label{infpipedream}
For finite $M\subset Q$ consider $(C,D)\in Q$ such that $(C,D)\not\preceq(i,j)$ for all $(i,j)\in M$. If $\al C,D-1\ar\in Q$, then
\begin{equation}\label{valuecase1}
N_M((C,D),\al C,D-1\ar)=w_M(C\mod k)    
\end{equation}
and if $\al C-1,D\ar\in Q$, then \[N_M((C,D),\al C-1,D\ar)=w_M(D\mod (n-k)).\]
\end{lemma}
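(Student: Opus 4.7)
The plan is to prove both formulas of Lemma~\ref{infpipedream} simultaneously by induction on $|M|$. I work in the lifted poset $(H, \prec')$ in which the Hasse diagram of $Q$ unrolls into a 2D-grid structure, so a pipe in $Q$ lifts uniquely to a monotone lattice path in $H$ turning precisely at lifted elements of $M$, and the projection $H \to Q$ commutes with pipes. Proving both formulas in parallel is useful because the inductive step for one formula invokes the other at strictly smaller configurations (corresponding to the two possible post-turn directions from a common point).

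For the base case $M = \emptyset$, the pipe has no turns, so in the lift it proceeds straight from $(C, D)$ with $j$ decreasing until no further $j$-decreasing step stays within $H$. A direct case analysis on the terminal element $(i_m, j_m)$ --- whether $i_m > 1$, $j_m > k+1$, or $(i_m, j_m) = (1, k+1)$ with direction as specified --- together with careful tracking of the wrap-arounds introduced by covering relations of types (3) and (4), verifies $N_\emptyset((C,D),\langle C,D-1\rangle) = C \mod k = w_\emptyset(C \mod k)$; the base case of the second formula is symmetric. For the inductive step, pick a $\prec$-minimal $(a, b) \in M$ so that $w_M = \tilde s_{a, b} \cdot w_{M'}$ with $M' = M \setminus \{(a, b)\}$; the hypothesis $(C, D) \not\preceq (i, j)$ for $(i, j) \in M$ is inherited by $M'$. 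The pipes for $M$ and $M'$ differ only at $(a, b)$ (the former may turn, the latter continues straight). If the pipe for $M$ does not traverse $(a, b)$, the two pipes coincide with common value $w_{M'}(C \mod k)$ by induction, and I show $\tilde s_{a, b}$ fixes this value by arguing that otherwise, tracing the pipe backward from its terminus would force it to reach $(a, b)$. If the pipe for $M$ does traverse $(a, b)$, the two post-$(a, b)$ continuations are pipes starting at $(a, b)$ with opposite initial directions and using the index set $M'' = \{(i, j) \in M' : (i, j) \prec (a, b)\}$, which satisfies the required hypothesis relative to $(a, b)$ by $\prec$-minimality of $(a,b)$ in $M$. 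Applying the inductive hypothesis (one formula to each continuation) gives terminal values $w_{M''}(a \mod k)$ and $w_{M''}(b \mod (n-k))$, which are exactly the pair interchanged by $\tilde s_{a, b}$; this yields $N_M = \tilde s_{a, b}(N_{M'}) = w_M(C \mod k)$ as required.

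The main obstacle I anticipate is in the second case of the inductive step, namely matching the two post-turn terminal values with the specific pair $\{a \mod k,\, b \mod (n-k)\}$ that $\tilde s_{a, b}$ transposes. This is precisely what forces invoking both formulas of the lemma on the truncated configuration at $(a, b)$ --- one for each post-turn direction --- and is the reason both formulas must be proved in tandem rather than sequentially. A secondary subtlety is that the lifted pipe may wrap around several times via relations (3) and (4), so the reductions modulo $k$ and $n-k$ must be carefully bookkept throughout, particularly to ensure consistency between the two formulas during the simultaneous induction and to verify that $(a, b)$ lies in $Q$ (and not merely in $H$) when applying the hypothesis to the post-turn pipe.
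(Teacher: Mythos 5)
Your proposal is correct, and the induction it runs is genuinely different from the paper's.

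Both proofs induct on $|M|$ and prove the two formulas in tandem, but the element peeled off is chosen differently. The paper looks at the first element of $M$ encountered by the pipe, which is the $\prec$-maximum of $M$ among elements with $i\equiv C\pmod{k}$, and sets $M'$ to be $M$ with everything $\succeq$ that element removed. The remainder of the pipe past that first turn is exactly an $M'$-pipe issuing from the turning point \emph{in the other initial direction}, so the inductive step reduces the first formula directly to the second formula for $M'$; the ``pipe avoids $M$'' case is then trivial, because no element of $M$ can share a residue class $i\bmod k$ with $C$. You instead remove a single $\prec$-minimal element $(a,b)$ of all of $M$ and write $w_M=\tilde s_{a,b}\,w_{M'}$. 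This makes the ``pipe hits $(a,b)$'' case the clean one: since nothing of $M$ lies strictly below $(a,b)$, the two post-$(a,b)$ continuations (for $M$ and for $M'$) are straight pipes in opposite directions with values $a\bmod k$ and $b\bmod(n{-}k)$, which is exactly the transposed pair. The burden shifts to the ``pipe avoids $(a,b)$'' case, where you must show $\tilde s_{a,b}$ fixes $N_{M'}=w_{M'}(C\bmod k)$; your backward-trace argument does this, and it is worth spelling out the dichotomy more explicitly than your sketch does: the value determines the terminus \emph{and} exit direction, so if $N_{M'}\in\{a\bmod k,\,b\bmod(n{-}k)\}$ then the $M'$-pipe from $(C,D)$ and the straight pipe issuing from $(a,b)$ (in the matching direction) are both reversed initial segments of the same backward trace from the terminus, hence one contains the other as a path; the first containment contradicts ``pipe avoids $(a,b)$,'' and the second forces $(C,D)\prec(a,b)$, contradicting the hypothesis of the lemma. (You should also check the degenerate boundary situations where $\langle a,b-1\rangle$ or $\langle a-1,b\rangle$ is not in $Q$; these reduce to $(a,b)$ itself being the terminus, which is again excluded.) The trade-off between the two approaches is roughly: the paper's reduction can shrink $M$ by many elements at once and keeps the easy case trivial, while yours shrinks $M$ by exactly one element and needs the uniqueness-of-backward-trace argument, but makes the turning case an immediate base-case computation rather than a nested induction.
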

\begin{proof}
This is checked by induction on $|M|$. Let $|M|=0$. To verify the first equality, note that the pipe starting with $(C,D),\al C,D-1\ar$ consists of all $(C',D')\preceq (C,D)$ with $C'-C\in k\bZ$. Its value is $C\mod k$. We also trivially have $w_M(C\mod k)=C\mod k$. The second equality is verified similarly with both sides equal to $D\mod (n-k)$

Now suppose $|M|>0$. If the pipe starting with $(C,D),\al C,D-1\ar$ contains no elements of $M$, then both sides of the first equality are again equal to $C\mod k$ because $i\mod k\neq C\mod k$ for any $(i,j)\in M$. Otherwise, denote the pipe's elements by $(i_1,j_1),\dots,(i_m,j_m)$ and choose the smallest $r$ such that $(i_r,j_r)\in M$. Note that $(i_r,j_r)$ is the $\prec$-maximal element in $M$ with $i_r-C\in k\bZ$. Let $M'$ consist of $(i,j)\in M$ for which $(i,j)\not\succeq(i_r,j_r)$. One sees that either $r=m$ and $i_r=C\mod k=1$ so that both sides of~\eqref{valuecase1} equal $j_r$ or \[N_M((C,D),\al C,D-1\ar)=N_{M'}((i_r,j_r),\al i_r-1,j_r\ar).\] By the induction hypothesis the latter is equal to $w_{M'}(j_r\mod (n-k))$ and we are to show that applying the product of $\tilde s_{i,j}$ with $(i,j)\in M\backslash M'$ to $C\mod k=i_r\mod k$ provides $j_r\mod (n-k)$. However, the above characterization of $(i_r,j_r)$ shows that $\tilde s_{i_r,j_r}$ is the only transposition in the above product which acts nontrivially on $C\mod k$. The induction step for the second equality is similar.
\end{proof}

\begin{example}\label{infpipedreamex}
Consider $n=5$, $k=3$ and $M=\{(1,4),(2,5),(4,4),(3,6),(4,5)\}$. Let us write the first equality in Lemma~\ref{infpipedream} for $(C,D)$ equal to $(4,7)$, $(5,5)$ and $(3,7)$ and the second equality for $(5,6)$ and $(5,5)$. The edges of the corresponding pipes are shown below, each pipe in its own colour, while the elements of $M$ are highlighted in black. The proposition shows that the values of these 5 pipes taken in the above order compose the permutation $w_M$. Hence, $w_M=(2,5,4,3,1)$ which is also equal to $\tilde s_{1,4}\tilde s_{2,5}\tilde s_{4,4}\tilde s_{3,6}\tilde s_{4,5}=s_{1,4}s_{2,5}s_{1,4}s_{3,4}s_{1,5}$. For an arbitrary $M$, we can construct $n$ pipes in a similar way to find $w_M$. The collection of these pipes may be viewed as a ``pipe dream'' of $M$ but, of course, there are infinitely many suitable choices for the pipes' first elements.
\[
\begin{tikzcd}[row sep=tiny,column sep=tiny]
&&&(4,4)\arrow[dl,orange]\arrow[ddddll, dashed,cyan] && \color{lightgray}(5,5)\arrow[dl,orange] \arrow[ddddll, dashed,green]\\
&&\color{lightgray}(3,4)\arrow[dl,orange]&& (4,5)\arrow[dl,red]\arrow[ul,orange] && \color{lightgray}(5,6)\arrow[dl,blue]\\
&\color{lightgray}(2,4)\arrow[dl,orange]&&  \color{lightgray}(3,5)\arrow[ul,blue]\arrow[dl,red] && \color{lightgray}(4,6)\arrow[dl,blue]\arrow[ul,red]\\
(1,4)&&(2,5)\arrow[dl,green]\arrow[ul,red] &&    (3,6)\arrow[dl,cyan]\arrow[ul,blue] && \color{lightgray}(4,7)\arrow[ul,red]\\
&\color{lightgray}(1,5)\arrow[ul,cyan]&&\color{lightgray}(2,6)\arrow[ul,green]\arrow[uuuu, dashed,cyan] && \color{lightgray}(3,7)\arrow[ul,cyan]\\
\end{tikzcd}
\]
\end{example}

We now define the final ingredient of our theorem: \textit{chain-order polytopes} associated with $(Q,\prec)$. They are termed ``interpolating polytopes'' in~\cite{M2,FMP} but since their relationship to MCOPs is the same as that of order polytopes to marked order polytopes or of chain polytopes to marked chain polytopes, we now consider the former term more appropriate.
\begin{definition}
Let $\cJ_\infty$ denote the set of all finite order ideals in $(Q,\prec)$. For a partition $O\sqcup C$ of $Q$ and $J\in \cJ_\infty$ denote \[M_{O,C}(J)=(J\cap O)\cup\max\nolimits_\prec(J).\] The corresponding chain-order polytope $\cO_{O,C}\subset\bR^Q$ is the convex hull of indicator vectors $\mathbf 1_{M_{O,C}(J)}$ for all $J\in\cJ_\infty$.
\end{definition}

Such a polytope contains no integer points other than the $\mathbf 1_{M_{O,C}(J)}$ and is also normal (see~\cite[Section 3]{FMP}). In particular, its toric variety (the corresponding generalized Hibi variety) is $\Proj R_\infty^{O,C}$ which is defined as follows.
\begin{definition}
Denote $\bC[\cJ_\infty]=\bC[X_J]_{J\in\cJ_\infty}$ and $\bC[Q,s]=\bC[s][z_p]_{p\in Q}$. Consider the map $\varphi_\infty^{O,C}:\bC[\cJ_\infty]\to\bC[Q,s]$ given by \[\varphi_\infty^{O,C}(X_J)=s\prod_{p\in M_{O,C}(J)} z_p.\] The kernel of $\varphi_\infty^{O,C}$ is the generalized Hibi ideal $I_\infty^{O,C}$ and the image of $\varphi_\infty^{O,C}$ is the generalized Hibi ring $R_\infty^{O,C}$.
\end{definition}

Now fix a partition $Q=O\sqcup C$ such that $O$ contains all elements of the form $(i,i)$. For $\sigma\in\mathcal S_k$ denote $X_{i_{\sigma(1)},\dots,i_{\sigma(k)}}^{(l)}=(-1)^\sigma X_{i_1,\dots,i_k}^{(l)}$. For $J\in\cJ_\infty$ denote $w^J=w_{M_{O,C}(J)}$. Define a map $\psi_\infty:\bC[J_\infty]\to S_\infty$ by \[\psi_\infty(X_J)=X_{w^J(1),\dots,w^J(k)}^{(d(J))}\] where $d(J)$ is the number of elements of the form $(i,i)$ contained in $J$. The following is our main result concerning semi-infinite Grassmannians.
\begin{theorem}\label{infmain}
$\psi_\infty$ is an isomorphism and there exists a reverse lexicographic order $\lessdot$ on $T$ for which the $D_{i_1,\dots,i_k}^{(l)}$ form a sagbi basis of $R_\infty$ and $\psi_\infty(I_\infty^{O,C})=\initial_{\lessdot^\varphi} I_\infty$. In particular, $\psi_\infty$ induces an isomorphism between $R_\infty^{O,C}$ and $\initial_\lessdot R_\infty$.
\end{theorem}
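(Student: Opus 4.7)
The plan is to mimic the proof of Theorem~\ref{degenmain}, replacing the finite-poset objects of Section~\ref{toric} by their cylindrical counterparts in $(Q,\prec)$. Three ingredients are needed: (i) an analog of the pipe-value function $r$ together with an identification $\theta$ taking monomials in the Hibi ring $R^{O,C}_\infty$ to monomials in $T_\infty$; (ii) a reverse lexicographic order $\lessdot$ on $T_\infty$ for which $\initial_\lessdot D^{(l)}_{i_1,\dots,i_k}$ is a single monomial; and (iii) a bigraded dimension count showing $\theta(R^{O,C}_\infty) = \initial_\lessdot R_\infty$. Given these, the theorem follows by mirroring the final paragraph of the proof of Theorem~\ref{degenmain} and invoking Proposition~\ref{infidealsubalg} in place of Proposition~\ref{idealsubalg}.

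For (i), I would use Lemma~\ref{infpipedream} to set $r(C,D) = w_{M_{O,C}(\langle(C,D)\rangle)}(C \bmod k)$ for $(C,D) \in Q$ and extend $r$ to enough further pairs so that $r(i,\cdot)$ is a permutation of $[1,n]$ for each residue $i \bmod k$. Then $\theta$ should send each $z_p$ with $p=(i,j)\in Q$ to a term of the form $z^{(m)}_{i \bmod k,\, r(i,j)}$ (possibly as a ratio, mirroring the definition of $\theta$ in Section~\ref{toric} for non-diagonal elements), with the level $m$ recording how many times the cylinder has been wrapped between consecutive elements of $O$ on the relevant pipe. The hypothesis $\{(i,i)\} \subset O$ is what makes this level bookkeeping unambiguous: the wrap-around covering relations (3) and (4) of Definition~\ref{precdef} always occur at the diagonal elements, which are in $O$ and therefore mark pipe turns. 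The target equation is the analog of Proposition~\ref{thetaJ}, namely
\[
\theta\bigl(\varphi^{O,C}_\infty(X_J)\bigr) = \prod_{i=1}^{k} z^{(l_i(J))}_{i,\,w^J(i)},
\]
with level offsets $l_i(J) \geq 0$ summing to $d(J)$. The order $\lessdot$ in (ii) is built in the style of Definition~\ref{orderdef}: first compare by level $l$, then by row $i$, and within a row use the pipe-based rule of Section~\ref{toric}, finally passing to a reverse lexicographic extension as required by Proposition~\ref{infflatfamily}. The analog of Proposition~\ref{initialD} reads $\initial_\lessdot D^{(d(J))}_{w^J(1),\dots,w^J(k)} = \prod_i z^{(l_i(J))}_{i,w^J(i)}$ and is obtained by the same pipe-tracing argument as in the finite case, the only new step being the tracking of levels at each pipe wrap, which the ordering of $\lessdot$ by level first accommodates automatically.

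The main obstacle is (iii), the sagbi basis statement. In the finite case Corollary~\ref{intpoints} together with Proposition~\ref{minkowski} gave $\dim\theta(R^{O,C}_\bd)[\la] = \dim R_\bd[\la]$ in one line, but here both spaces are infinite and the comparison must be performed bigraded component by bigraded component with respect to $\grad_\infty$, matching the lattice points of a finite slice of $\cO_{O,C}$ against the dual of a level-$l$ Weyl module. Such a matching is established in~\cite[Section 3]{FMP} for the two extreme choices of $(O,C)$; for a general partition with $O \supset \{(i,i)\}$ I would either run an inductive swap argument on positions of $C$ analogous to the proof of Proposition~\ref{ehrhart}, or quote an Ehrhart-equivalence result for marked chain-order polytopes and transport it to the cylindrical setting. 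Once (iii) is in hand, the identity $\theta\circ\varphi^{O,C}_\infty = \varphi_\lessdot \circ \psi_\infty$ (evident from (i) and the computation of $\initial_\lessdot D^{(l)}$) together with injectivity of $\theta$ yields injectivity of $\psi_\infty$, while Proposition~\ref{infidealsubalg} identifies $\ker\varphi_\lessdot = \initial_{\lessdot^\varphi} I_\infty$ with $\psi_\infty(I^{O,C}_\infty)$ and induces the isomorphism $R^{O,C}_\infty \simeq \initial_\lessdot R_\infty$.
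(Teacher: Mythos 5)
Your outline matches the paper's proof closely in its overall architecture: ingredients (i), (ii), and the final assembly (using the identity $\theta_\infty\circ\varphi^{O,C}_\infty = \varphi_\lessdot\circ\psi_\infty$ together with injectivity of $\theta_\infty$ and $\psi_\infty$, plus Proposition~\ref{infidealsubalg}) are exactly what the paper does, including the formulation of the analog of Proposition~\ref{thetaJ} with level offsets summing to $d(J)$ and the role of $\{(i,i)\}\subset O$ in making the level bookkeeping well-defined. The one place where you diverge is step (iii), the sagbi basis claim, which you correctly flag as the main obstacle but for which you propose heavier machinery than the paper needs. You suggest either reproving an Ehrhart-equivalence statement for the cylindrical chain-order polytopes via a swap argument, or matching lattice-point slices against duals of Weyl modules. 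The paper instead uses a cleaner and purely algebraic reduction: the generalized Hibi ring $R^{O,C}_\infty$ has a standard monomial basis indexed by chains $J_1\subset\dots\subset J_m$ in $\cJ_\infty$ (this is~\cite[Corollary 2.5]{FMP}), and Proposition~\ref{infthetaJ} shows the $\grad_\infty$-degree of $\theta_\infty(\varphi^{O,C}_\infty(X_{J_1}\cdots X_{J_m}))$ is $km\oplus\sum d(J_i)$, which is visibly independent of $(O,C)$. Combined with injectivity of $\theta_\infty$, this immediately gives $(O,C)$-independence of the graded dimensions of $\theta_\infty(R^{O,C}_\infty)$, so one may reduce to the single case $O=\{(i,i)\}_{i\ge k+1}$ already settled in~\cite[Lemma~5.9]{FMP}. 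Your second suggestion is in the right spirit (it is an Ehrhart-type independence statement) but the paper never needs to count lattice points of polytope slices or touch Weyl modules; the chain-count argument short-circuits all of that. You should also make explicit that $\psi_\infty$ is surjective: this follows from the easy count that there are exactly $\binom{n}{k}$ order ideals $J\in\cJ_\infty$ with $d(J)=m$ for each $m\ge 0$.
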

By Proposition~\ref{infflatfamily} we have
\begin{cor}
The generalized Hibi variety $\Proj R^{O,C}_\infty$ is a flat degeneration of the semi-infinite Grassmannian $\Proj R_\infty$.
\end{cor}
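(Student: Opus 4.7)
The plan is to mirror the proof of Theorem~\ref{degenmain}: build an auxiliary endomorphism $\theta_\infty$ of a Laurent extension of $T_\infty$ together with a reverse lexicographic order $\lessdot$ on $T_\infty$, and then derive the isomorphism and sagbi claims via dimension bookkeeping under the bigrading $\grad_\infty$ of Proposition~\ref{infidealsubalg}. The hypothesis that $O$ contains every diagonal element $(i,i)\in Q$ is what makes this work: it cuts each ``column'' of $Q$ (a residue class under the translation $(i,j)\mapsto(i+k,j-n+k)$) into finite intervals whose endpoints lie in $O$, so a telescoping analogue of the finite-rank $\theta$ from Section~\ref{toric} is well-defined on every $M_{O,C}(J)$.

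First, for each $J\in\cJ_\infty$ the values $w^J(1),\dots,w^J(k)$ would be read off using Lemma~\ref{infpipedream} via pipes that enter $Q$ through ``edge'' vertices lying just beyond $J$. Then I would define $\theta_\infty$ column-by-column, telescoping through the $O$-markers, so as to achieve
\[
\theta_\infty\Bigl(\prod_{p\in M_{O,C}(J)} z_p\Bigr)=\prod_{i=1}^k z^{(c_i^J)}_{i,w^J(i)}\qquad\text{with }\sum_{i=1}^k c_i^J=d(J);
\]
this is the semi-infinite analogue of Proposition~\ref{thetaJ}, with the $t$-level $c_i^J$ recording how many diagonal elements of $J$ lie in the $i$th column. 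Second, I would extend the order of Definition~\ref{orderdef} from $T$ to $T_\infty$ by refining the original ordering of the $z_{i,j}$ by the $t$-degree in the unique way consistent with the reverse lexicographic requirement, and then verify the analogue of Proposition~\ref{initialD}:
\[
\initial_\lessdot D^{(d(J))}_{w^J(1),\dots,w^J(k)}=\prod_{i=1}^k z^{(c_i^J)}_{i,w^J(i)}.
\]
This is a combinatorial check on the expansion of the $k\times k$ minor of $(z_{i,j}(t))_{i,j}$, driven by the pipe dream interpretation of $w^J$.

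Third, the sagbi property for the $D^{(l)}_{i_1,\dots,i_k}$ follows, as in Proposition~\ref{thetasagbi}, by comparing graded dimensions: the $\theta_\infty$-images of the generators $\varphi_\infty^{O,C}(X_J)$ lie in $\initial_\lessdot R_\infty$, remain pairwise distinct by the injectivity of $\theta_\infty$, and one needs $\dim R_\infty^{O,C}[\la]=\dim R_\infty[\la]$ for every $\grad_\infty$-bidegree $\la$. This should follow by combining normality and the Minkowski sum property of chain-order polytopes of $Q$ (proved in~\cite{FMP}) with the known Weyl-module character formulas describing the graded pieces of $R_\infty$ (cf.~\cite{BF,FeM,Kat}). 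Once the equality $\initial_\lessdot R_\infty=\theta_\infty(R_\infty^{O,C})$ is established, the identifications $R_\infty^{O,C}\simeq\initial_\lessdot R_\infty$ and $\psi_\infty(I_\infty^{O,C})=\initial_{\lessdot^\varphi} I_\infty$ drop out from Proposition~\ref{infidealsubalg} exactly as in the final paragraph of the proof of Theorem~\ref{degenmain}. The isomorphism claim for $\psi_\infty$ itself reduces to verifying that $J\mapsto(\{w^J(1),\dots,w^J(k)\},d(J))$ is a bijection from $\cJ_\infty$ to $k$-subsets of $[1,n]$ times $\bZ_{\ge0}$: injectivity is automatic from the sagbi picture, and surjectivity is a direct combinatorial construction of the appropriate order ideal from each prescribed pair.

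The main obstacle is the dimension comparison in the sagbi step. In the finite case Corollary~\ref{intpoints} supplied both sides with the common value $\dim V_\la$; here the analogous equality must match Ehrhart counts for dilates of chain-order polytopes of $Q$ against graded dimensions of the semi-infinite Pl\"ucker algebra, the latter being governed by Weyl-module characters rather than irreducible ones. A secondary technical point is ensuring the candidate order $\lessdot$ is genuinely reverse lexicographic in the sense required by Proposition~\ref{infflatfamily}: with an infinite alphabet the ordering of the $z_{i,j}^{(l)}$ must be built carefully so as to be simultaneously compatible with the finite-rank order of Definition~\ref{orderdef} and with the reverse-lex convention on $t$-degrees. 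Everything else should transfer from Section~\ref{toric} essentially verbatim.
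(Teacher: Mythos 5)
Your blueprint follows the paper's own proof of Theorem~\ref{infmain} closely: the map $\theta_\infty$ on a Laurent extension, the reverse lexicographic order $\lessdot$, the semi-infinite analogues of Propositions~\ref{thetaJ} and~\ref{initialD}, and finally the sagbi argument feeding into Propositions~\ref{infidealsubalg} and~\ref{infflatfamily}. Your stated form $\theta_\infty(\prod_{p\in M_{O,C}(J)}z_p)=\prod_i z^{(c_i^J)}_{i,w^J(i)}$ with $\sum c_i^J=d(J)$ is essentially the paper's Proposition~\ref{infthetaJ} (modulo the auxiliary variable $s$), and your description of the bijectivity of $J\mapsto(\{w^J(1),\dots,w^J(k)\},d(J))$ matches the isomorphism claim of Theorem~\ref{infmain}.

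The genuine gap is exactly where you flag it: the graded-dimension equality $\dim \theta_\infty(R^{O,C}_\infty)[\la]=\dim\initial_\lessdot R_\infty[\la]$. You propose to match an Ehrhart-type count for chain-order polytopes of $Q$ against Weyl-module characters, but neither half of that matching is off the shelf. Proposition~\ref{ehrhart} (Ehrhart equivalence of MCOPs) is a result for \emph{finite} marked posets from~\cite{FFLP} and is not stated for the infinite poset $Q$; establishing an analogue is roughly as hard as the claim you want to prove. Nor is ``the lattice count of $\cO_{O,C}$ equals the Weyl-module dimension'' available a priori for general $(O,C)$ — it is precisely the kind of corollary one hopes to extract from the degeneration, so invoking it here risks circularity. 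The paper avoids all of this with a sharper move: use the basis of $R^{O,C}_\infty$ given by products $\varphi^{O,C}_\infty(X_{J_1}\cdots X_{J_m})$ over nested chains $J_1\subset\cdots\subset J_m$ (\cite[Corollary 2.5]{FMP}) to observe that the $\grad_\infty$-graded dimension of $\theta_\infty(R^{O,C}_\infty)$, namely a count of nested chains with prescribed $m$ and $\sum d(J_r)$, is manifestly independent of $(O,C)$; then it suffices to check the equality for a single partition, and the case $O=\{(i,i)\}_{i\ge k+1}$ is already proved in~\cite[Lemma 5.9]{FMP}. You should replace your Ehrhart-vs.-character idea with this ``$(O,C)$-independence plus one known base case'' argument; without it, the sagbi step does not close.
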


\begin{example}\label{infpsiex}
As in Example~\ref{infpipedreamex} consider $n=5$ and $k=3$. Let $J\in\cJ_\infty$ be the order ideal generated by $(4,5)$ and $(3,6)$. The example shows that for any $O$ containing $(1,4),(2,5)$ and all $(i,i)$ and not containing $(1,5),(2,4),(3,4),(2,6),(3,5)$ we will have $\psi_\infty(X_J)=X_{2,5,4}^{(1)}$.
\end{example}

The proof will follow the same outline as that of Theorem~\ref{degenmain}.
\begin{definition}
For $(i,j)\in Q$ set $r(i,j)=N_O((i,j),\al i-1,j\ar)$ if the latter is well-defined. If not, we have $i=1$ and set $r(i,j)=j$.
\end{definition}

\begin{definition}
Consider $(i,j)\in Q$, suppose $i=qk+(i\mod k)$. Set $\theta'(z_{(i,j)})=z_{i\mod k,r(i,j)}^{(q)}$ defining a map $\bC[z_p]_{p\in Q}\to T_\infty$. Now define a map \[\theta_\infty:\bC[Q,s]\to \bC\big[{z_{i,j}^{(l)}}^{\pm1}\big]_{i\in[1,k],\, j\in[1,n],\, l\ge 0}\] as follows. For $(i,j)$ as above consider the $\prec$-maximal $(i',j')\in O$ with $(i',j')\prec (i,j)$ and $i'-i\in k\bZ$. If such $(i',j')$ exists, set $\theta_\infty(z_{(i,j)})=\theta'(z_{(i,j)}/z_{(i',j')})$. If not, set $\theta_\infty(z_{(i,j)})=\theta'(z_{(i,j)})/z_{i\mod k,i\mod k}^{(0)}$. Finally, set $\theta_\infty(s)=z_{1,1}^{(0)}\dots z_{k,k}^{(0)}$.
\end{definition}

\begin{example}
For $O$ as in Example~\ref{infpsiex} one sees that $r(4,6)=3$ and $r(4,5)=2$, hence $\theta_\infty(z_{(4,6)})=\theta'(z_{(4,6)}/z_{(4,5)})=z_{1,3}^{(1)}/z_{1,2}^{(1)}$. We also have $r(4,4)=1$ and $r(1,4)=r(3,6)=4$, hence $\theta_\infty(z_{(4,4)})=\theta'(z_{(4,4)}/z_{(1,4)})=z_{1,1}^{(1)}/z_{1,4}^{(0)}$ and $\theta_\infty(z_{(3,6)})=\theta'(z_{(3,6)})/z_{3,3}^{(0)}=z_{3,4}^{(0)}/z_{3,3}^{(0)}$.
\end{example}

\begin{proposition}\label{infthetaJ}
Consider $J\in\cJ_\infty$, let $d(J)\mod k=l$, suppose $d(J)=qk+l$. Then
\[\theta_\infty\left(s\prod_{p\in M_{O,C}(J)}z_p\right)=z_{l+1,w^J(l+1)}^{(q)}\dots z_{k,w^J(k)}^{(q)}z_{1,w^J(1)}^{(q+1)}\dots z_{l,w^J(l)}^{(q+1)}.\]
\end{proposition}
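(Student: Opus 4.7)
The plan is to mirror the proof of Proposition~\ref{thetaJ}, organizing the computation by residue class modulo $k$ and tracking the additional ``level'' superscripts. I would first fix $l'\in[1,k]$ and observe that the elements of $M_{O,C}(J)$ with $i\equiv l'\pmod{k}$ form a chain $(i_1,j_1)\prec\dots\prec(i_t,j_t)$, because any two elements of $Q$ sharing a residue mod $k$ are comparable via (i) or (ii) of Definition~\ref{precdef}. For $a<t$, $(i_a,j_a)$ lies strictly below $(i_{a+1},j_{a+1})$ in $J$ and so is not $\prec$-maximal in $J$, hence $(i_a,j_a)\in J\cap O$; moreover no element of $O$ in class $l'$ can lie strictly between two consecutive chain elements, nor strictly below $(i_1,j_1)$, since such an element would belong to $J\cap O\subset M_{O,C}(J)$ and spoil consecutiveness or minimality. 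Writing $i_b=q_bk+l'$, the definition of $\theta_\infty$ then yields $\theta_\infty(z_{(i_a,j_a)})=z_{l',r(i_a,j_a)}^{(q_a)}/z_{l',r(i_{a-1},j_{a-1})}^{(q_{a-1})}$ for $a\ge 2$ and $\theta_\infty(z_{(i_1,j_1)})=z_{l',r(i_1,j_1)}^{(q_1)}/z_{l',l'}^{(0)}$, so the class-$l'$ product telescopes to $z_{l',r(i_t,j_t)}^{(q_t)}/z_{l',l'}^{(0)}$.

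I would then multiply over all residue classes and include $\theta_\infty(s)=z_{1,1}^{(0)}\dots z_{k,k}^{(0)}$, whose factor $z_{l',l'}^{(0)}$ cancels the denominator of each non-empty class (and survives as $z_{l',l'}^{(0)}$ for empty classes). To match the formula I must identify $q_t$ and $r(i_t,j_t)$. The diagonal part of $J$ is exactly $\{(k+1,k+1),\dots,(k+d,k+d)\}$ with $d=qk+l$, and any $(i,j)\in J$ with $i\ge k+1$ forces $(i,i)\in J$ by the order-ideal property; it follows that the maximum $i$-coordinate in class $l'$ among $M_{O,C}(J)$-elements equals $(q+1)k+l'$ for $l'\in[1,l]$ and $qk+l'$ for $l'\in[l+1,k]$, giving $q_t=q+1$ and $q_t=q$ respectively. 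An empty class in $M_{O,C}(J)$ can only occur when $q=0$ and $l'>l$, in which case $w^J(l')=l'$ because no transposition of $w_{M_{O,C}(J)}$ touches $l'$, and the surviving $z_{l',l'}^{(0)}$ matches the formula's $z_{l',w^J(l')}^{(q)}=z_{l',l'}^{(0)}$.

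The remaining identification $r(i_t,j_t)=w^J(l')$ is the heart of the argument and uses Lemma~\ref{infpipedream}. I would take $(C,D)=(i_t+Nk,i_t+Nk)$ with $N$ large: this is a diagonal element of $Q$, has $C\equiv l'\pmod{k}$, and satisfies $(C,D)\not\preceq(i,j)$ for every $(i,j)\in M_{O,C}(J)$ since $C$ exceeds every $i$-coordinate of $M_{O,C}(J)$. By Lemma~\ref{infpipedream} the pipe in $M_{O,C}(J)$ from $(C,D)$ in direction $\al C,D-1\ar$ has value $w_{M_{O,C}(J)}(l')=w^J(l')$. Traversing class $l'$ via the $(i,j)\to(i,j-1)$ edges and the cyclic jumps of type~(3), this pipe descends row by row through class $l'$ without turning until it first encounters the $\prec$-maximum element $(i_t,j_t)\in M_{O,C}(J)$ of class $l'$, where it turns into direction $\al i_t-1,j_t\ar$. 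Below $(i_t,j_t)$ the sets $O$ and $M_{O,C}(J)$ coincide (an element of $O$ below $(i_t,j_t)$ lies in $J\cap O\subset M_{O,C}(J)$; an element of $M_{O,C}(J)$ below $(i_t,j_t)$ cannot be $\prec$-maximal in $J$ and hence lies in $J\cap O\subset O$), so the pipe's tail agrees with the pipe in $O$ defining $r(i_t,j_t)$, yielding $w^J(l')=r(i_t,j_t)$.

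The main obstacle is this last pipe-tracing step: one must verify carefully that the descending pipe really does hit $(i_t,j_t)$ as its first encounter with $M_{O,C}(J)$ and that the subsequent turn produces precisely the direction $\al i_t-1,j_t\ar$ used in the definition of $r$. This requires care with the cyclic identifications $\al\cdot,\cdot\ar$ and with the boundary cases $i_t=1$ (where the pipe in $O$ terminates at $(1,j_t)$ with $r(1,j_t)=j_t$ by convention, which must be reconciled with the end-value rule applied to the $M_{O,C}(J)$-pipe) and $j_t=i_t+n-1$ (where the edge into $(i_t,j_t)$ from above comes from a type-(3) rather than a type-(2) covering).
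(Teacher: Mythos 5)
Your proof is correct and follows essentially the same route as the paper: decompose the product of $\theta_\infty(z_p)$ over $p\in M_{O,C}(J)$ by residue class modulo $k$, telescope each class to $\theta'(z_{(i_t,j_t)})/z_{l',l'}^{(0)}$ where $(i_t,j_t)$ is the class-maximum, and then reduce the identification $r(i_t,j_t)=w^J(l')$ to Lemma~\ref{infpipedream}. The paper's own proof is considerably terser on the last step — it simply cites Lemma~\ref{infpipedream} — whereas you spell out the application by choosing $(C,D)$ far above $M_{O,C}(J)$ and tracing the pipe down to $(i_t,j_t)$, then noting that below $(i_t,j_t)$ the sets $M_{O,C}(J)$ and $O$ agree (since every element strictly below $(i_t,j_t)$ is in $J$ but not $\prec$-maximal there). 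This filled-in reasoning is exactly what makes the cited lemma applicable, and your treatment of the empty-class case $q=0$, $l'>l$ and the boundary cases $i_t=1$ matches the paper's case analysis (the paper phrases the level computation via "$(i,i)\in M_{O,C}(J)$ iff $i\le d(J)+k$"). The only thing I would flag as worth making fully explicit is the check that $\al C,D-1\ar\in Q$ for your choice $(C,D)=(i_t+Nk,i_t+Nk)$ — one computes $\al C,C-1\ar=(C-k,C-1+n-k)$ and verifies this lies in $Q$ for $N$ large — but that is routine.
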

\begin{proof}
Consider $a\in[1,k]$ and let $\Pi_a$ denote the product of $\theta_\infty(z_{(i,j)})$ over all $(i,j)\in M_{O,C}(J)$ with $i-a\in k\bZ$. If this product is nonempty, it is equal to $\theta'(z_{(i_a,j_a)})/z_{a,a}^{(0)}$ where $(i_a,j_a)$ is $\prec$-maximal among all $(i,j)$ in the product. Furthermore, in this case we have $w^J(a)=r(i_a,j_a)$ by Lemma~\ref{infpipedream}.

First, suppose $a\in[l+1,k]$. If the product $\Pi_a$ is nonempty, we must have $i_a=qk+a$. That is since $(i,i)\in M_{O,C}(J)$ if and only if $i\le d(J)+k$ so that $((q+1)k+a,(q+1)k+a)\notin M_{O,C}(J)$ and either $(qk+a,qk+a)\in M_{O,C}(J)$ or $q=0$. We see that $\theta'(z_{(i_a,j_a)})=z_{a,w^J(a)}^{(q)}$. Now, if the product is empty, we have $q=0$ and $w^J(a)=a$. Hence, in both cases $z_{a,a}^{(0)}\Pi_a=z_{a,w^J(a)}^{(q)}$.

Now, if $a\in[1,l]$, then the product cannot be empty because $((q+1)k+a,(q+1)k+a)\in M_{O,C}(J)$. In this case we have $i_a=(q+1)k+a$ and $z_{a,a}^{(0)}\Pi_a=\theta'(z_{(i_a,j_a)})=z_{a,w^J(a)}^{(q+1)}$. We see that $\theta_\infty(s)\prod_{a\in[1,k]}\Pi_a$ is equal to both the left- and right-hand sides in the statement.
\end{proof}

Next, for $1\le i\le j\le k$ (so that $(i,j)\notin Q$) let us set $r(i,j)=j$.
\begin{proposition}
For any $i\ge 1$ the values $r(i,i),\dots,r(i,i+n-1)$ form a permutation of $1,\dots,n$.
\end{proposition}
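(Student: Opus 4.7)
The plan is to adapt the diagrammatic reverse-pipe argument used in the analogous finite-case proposition in Section~\ref{toric}. The $i = 1$ case is immediate: for every $j \in [k+1, n]$ the element $\langle 0, j\rangle$ is undefined, so the definition forces $r(1, j) = j$, which combined with the convention $r(1, j) = j$ for $j \in [1, k]$ yields the identity on $[1, n]$.

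For $i \ge 2$, the first substep is to note that every principal downset $\{p \in Q : p \preceq (C, D)\}$ is finite. This follows from inspecting the three clauses in Definition~\ref{precdef}: clause (i) caps both coordinates of elements below $(C, D)$, clause (ii) caps the $i$-coordinate, and clause (iii) caps the $j$-coordinate. Consequently, each pipe $P_j$ defining $r(i, j)$ for $j \in [\max(i, k+1), i + n - 1]$ visits only finitely many elements of $Q$ and turns only at elements of the finite subset $M := O \cap \{p \prec (i, i+n-1)\}$.

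Next, for each $r \in [1, n]$ I would trace a \emph{reverse pipe} $\widetilde P_r$ upward in $Q$. The case analysis in the definition of the pipe value uniquely specifies a starting element (a terminal of some forward pipe) together with an initial direction: $r \in [k+2, n]$ corresponds to terminal $(1, r)$ with an $i$-direction terminal step; $r \in [2, k]$ to terminal $(r, k+1)$ with a $j$-direction terminal step; and $r \in \{1, k+1\}$ correspond to terminals $(1, k+1)$ or $(k+1, k+1)$ together with a specific subcase of the value definition. Tracing upward, at each step the next covering relation is uniquely forced by the current element, the incoming direction, and membership in $O$. Hence $\widetilde P_r$ passes through at most one element of the form $(i, j)$ with $j \in [\max(i, k+1), i + n - 1]$, yielding injectivity of $j \mapsto r(i, j)$ on this range. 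Together with the convention values $r(i, j) = j$ for $j \in [i, k]$ (applicable if $i \le k$), which fill the block $[i, k]$ disjointly from the pipe values landing in $[1, i-1] \cup [k+1, n]$, we obtain $n$ distinct values.

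The main obstacle will be making the reverse tracing fully rigorous, in particular handling the wrap-around covering relations (types (3) and (4) of the Hasse diagram) and the two borderline values $r \in \{1, k+1\}$, where the terminal subcase must match correctly with the first upward step. A slicker alternative would be to appeal to Lemma~\ref{infpipedream}: choose a point $(C_j, D_j)$ sufficiently high above $(i, j)$ in $Q$ so that the pipe starting at $(C_j, D_j)$ passes straight through $(i, j)$ in $i$-direction and coincides with $P_j$ below; then the values $w_M(\cdot)$ given by the lemma would form a permutation. The difficulty there is satisfying the lemma's hypothesis $(C_j, D_j) \not\preceq (i', j')$ for all $(i', j') \in M$ while $M$ is forced to contain the diagonal elements in $A \subseteq O$ accumulating above $(i, j)$; some delicate bookkeeping is required to either shift $(C_j, D_j)$ by powers of the translation $(i,j) \mapsto (i+k, j-(n-k))$ in $H$ or to first remove and then reinsert those diagonal contributions.
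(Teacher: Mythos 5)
Your plan follows the same route as the paper: handle $i=1$ by inspection, and for $i\ge 2$ argue injectivity by reverse-tracing pipes from their common terminal. However, the write-up contains one concrete error and two substantive gaps. The error: for $r\in\{1,k+1\}$ the terminal is $(1,k+1)$ in \emph{both} cases (the two values are distinguished by the direction of the last edge and whether $(1,k+1)\in O$); $(k+1,k+1)$ cannot be a terminal since it covers two elements, namely $(k,k+1)$ and $(1,n)$, so a pipe can always be continued past it.

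The first gap is the sentence ``Hence $\widetilde P_r$ passes through at most one element of the form $(i,j)$ with $j\in[\max(i,k+1),i+n-1]$.'' Uniqueness of the backward step does not give this. What it gives, as in the paper, is that if $r(i,j_1)=r(i,j_2)$ then the two forward pipes share the same final two elements, so tracing backward one is a suffix of the other; the real work is ruling out that a pipe beginning at $(i,j_2),\langle i-1,j_2\rangle$ occurs as a proper suffix of one beginning at $(i,j_1),\langle i-1,j_1\rangle$, and you never address this. (Your statement is also stronger than needed and not obviously true, since a reverse pipe could a priori revisit row $i$ with a $j$-ward exit.) The second gap is the assertion that the pipe values land in $[1,i-1]\cup[k+1,n]$, disjoint from the conventional block $[i,k]$. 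This is precisely what must be shown for $i\in[2,k]$: that the pipe starting $(i,j),\langle i-1,j\rangle$ with $j\ge k+1$ cannot end at $(a,k+1)$ for $a\in[i,k]$. You give no argument, whereas the paper supplies one. These two points are the whole content of the proposition for $i\ge2$; the remainder of the proposal (finiteness of principal downsets, the alternative via Lemma~\ref{infpipedream}) circles around them without filling them in, and you yourself acknowledge as much.
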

\begin{proof}
For $i=1$ we evidently have the identity permutation. If $i>1$, choose $(i,j_1),(i,j_2)\in Q$ and note that if $r(i,j_1)=r(i,j_2)$, then the two pipes starting with $(i,j_1),(i-1,j_1)$ and $(i,j_2),(i-1,j_2)$ must end with the same two elements. However, by Definition~\ref{infpipedreamdef} this implies that one pipe contains the other which is impossible. It remains to show that for $i\in[2,k]$ and $j\in[k+1,i+n-1]$ we have $r(i,j)\notin[i,k]$ but in this case $(a,k+1)\not\prec(i-1,j)$ for any $a\in[i,k]$, hence the pipe starting with $(i,j),(i-1,j)$ cannot have value $a$.
\end{proof}

\begin{definition}
Define a reverse lexicographic order $\lessdot$ on $T_\infty$ as follows. First, set $z_{i_1,j_1}^{(l_1)}\lessdot z_{i_2,j_2}^{(l_2)}$ whenever $l_1<l_2$ or $l_1=l_2$ and $i_1<i_2$. Now consider $l\ge 0$ and $i\in[1,k]$, note that $r(kl+i,kl+i),\dots,r(kl+i,kl+i+n-1)$ is a permutation of $1,\dots,n$. For $0\le j_1<j_2\le n-1$ set $z_{i,r(kl+i,kl+i+j_1)}^{(l)}\lessdot z_{i,r(kl+i,kl+i+j_2)}^{(l)}$ unless $(kl+i,kl+i+j_1)\in O\cup\{(1,1),\dots,(k,k)\}$ and there is no $j\in[j_1+1,j_2]$ for which $(kl+i,kl+i+j)\in O$.
\end{definition}

\begin{example}
Let $n=6$, $k=3$. If $(5,5),(5,8)\in O$ and $(5,6),(5,7),(5,9),(5,10)\in C$, then \[z_{2,r(5,6)}^{(l)}\lessdot z_{2,r(5,7)}^{(l)}\lessdot z_{2,r(5,5)}^{(l)}\lessdot z_{2,r(5,9)}^{(l)}\lessdot z_{2,r(5,10)}^{(l)}\lessdot z_{2,r(5,8)}^{(l)}.\] Also, if $(2,6)\in O$ and $(2,4),(2,5),(2,7)\in C$, then \[z_{2,r(2,3)}^{(0)}=z_{2,3}^{(0)}\lessdot z_{2,r(2,4)}^{(0)}\lessdot z_{2,r(2,5)}^{(0)}\lessdot z_{2,r(2,2)}^{(0)}=z_{2,2}^{(0)}\lessdot z_{2,r(2,7)}^{(0)}\lessdot z_{2,r(2,6)}^{(0)}.\]
\end{example}

\begin{proposition}\label{infinitialD}
Consider $J\in\cJ_\infty$, let $d(J)\mod k=l$, suppose $d(J)=qk+l$. Then
\[\initial_\lessdot D_{w^J(1),\dots,w^J(k)}^{(d(J))}=\pm z_{l+1,w^J(l+1)}^{(q)}\dots z_{k,w^J(k)}^{(q)}z_{1,w^J(1)}^{(q+1)}\dots z_{l,w^J(l)}^{(q+1)}.\]
\end{proposition}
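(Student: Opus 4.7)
The proof adapts the argument of Proposition~\ref{initialD} with an additional layer to handle the level grading. Expanding the minor, one has
\[
D^{(d(J))}_{w^J(1),\ldots,w^J(k)} = \sum_{\sigma,\,(e_a):\,\sum e_a=d(J)}(-1)^\sigma\prod_{a=1}^{k}z^{(e_a)}_{a,w^J(\sigma(a))},
\]
so the claimed monomial appears with coefficient $\pm 1$ by taking $\sigma=\id$ together with $e_a=q+1$ if $a\le l$ and $e_a=q$ if $a>l$. Identifying it as $\initial_\lessdot$ splits into two stages.

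First, with $\sigma$ held fixed, the level tuple $(e_a)$ is optimized. Since $\lessdot$ is reverse lexicographic and orders variables primarily by superscript and secondarily by row, the smallest variable ``blocks'' are $(L,i)=(0,1),(0,2),\ldots,(0,k),(1,1),\ldots$, where block $(L,i)$ collects all $z^{(L)}_{i,*}$. A bubble-sort argument using swaps of the form $(e_{a_1},e_{a_2})=(q,q+1)$ to $(q+1,q)$ with $a_1<a_2$ strictly increases the monomial in $\lessdot$: such a swap removes a factor from the smaller block $(q,a_1)$ and inserts one into the larger block $(q,a_2)$, so the $\lessdot$-smallest differing variable appears only in the pre-swap monomial. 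Iterating drives any admissible tuple to the proposed one.

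Second, with the optimal $(e_a)$ fixed, compare $\sigma=\id$ against any transposition $\sigma=(a_1,a_2)$, $a_1<a_2$. The two monomials use the same two blocks $(e_{a_1},a_1)$ and $(e_{a_2},a_2)$, with only columns permuted. The $\lessdot$-smaller of these two blocks is $(q,a_2)$ when $a_1\le l<a_2$ and $(L,a_1)$ (with $L=e_{a_1}=e_{a_2}$) otherwise; let $i^*\in\{a_1,a_2\}$ denote the corresponding row. In every subcase the smallest differing variables lie in this smaller block, and the comparison reduces to the single inequality
\[
z^{(L)}_{i^*,w^J(i^*)}\gtrdot z^{(L)}_{i^*,w^J(j^*)},\qquad L=e_{i^*},\ j^*\in\{a_1,a_2\}\setminus\{i^*\}.
\]
A general $\sigma$ is then handled by decomposing into transpositions of this form.

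The remaining per-row inequality is the main obstacle; it is the semi-infinite analog of Proposition~\ref{initialD}. Following that strategy, I would write $w^J(i^*)=r(kL+i^*,\,kL+i^*+p^*)$, where $p^*$ is the maximal position such that $(kL+i^*,\,kL+i^*+p^*)\in M_{O,C}(J)$; this is precisely the $\prec$-maximality used in the proof of Proposition~\ref{infthetaJ}, and the assumption that $O$ contains all $(i,i)$ is what ensures $L=e_{i^*}$ matches. Next, express $w^J(j^*)=r(kL+i^*,\,kL+i^*+p')$ for the unique $p'\in[0,n-1]$, using that $r(kL+i^*,\cdot)$ permutes $[1,n]$. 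I would then trace the pipe of value $w^J(j^*)$ back into the row $kL+i^*$ via Lemma~\ref{infpipedream} to produce an element of $M_{O,C}(J)$ in that row at position $p'$; the relative positions of $p^*$ and $p'$ together with the $O$-membership of intermediate positions yield the inequality via the same case analysis as in Proposition~\ref{initialD}. The subtle step is verifying that the pipe of value $w^J(j^*)$ does enter the prescribed row $kL+i^*$ in the semi-infinite setting, which requires using the cyclic structure of $Q$ beyond what the finite argument needs.
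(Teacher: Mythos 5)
Your proposal follows the same two-stage plan as the paper: use the primary ordering of the $z^{(l)}_{i,j}$ by $l$ and then by $i$ to pin down the level profile, and then reduce the column choice to a per-row comparison of the form $z^{(L)}_{a,w^J(a)}\gtrdot z^{(L)}_{a,w^J(b)}$, which is settled by tracing pipes as in Proposition~\ref{initialD}. (The paper phrases the column step as a greedy selection block by block rather than by transpositions, but that is a cosmetic difference.) There are, however, two genuine gaps.

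First, the bubble-sort argument for the level profile does not apply to arbitrary admissible tuples: the swap $(e_{a_1},e_{a_2})=(q,q+1)\mapsto(q+1,q)$ requires the pre-swap levels at $a_1,a_2$ to already lie in $\{q,q+1\}$, so a tuple such as $e_1=qk+l$, $e_2=\dots=e_k=0$ is never reached. What is needed is a direct count: if $e'_a\ge q+1$ for $a\le l$ and $e'_a\ge q$ for $a>l$, then $\sum_a e'_a\ge qk+l$ with equality only for the proposed tuple; hence any other admissible tuple uses some block $(L,a)$ with $L<q$, or with $L=q$ and $a\le l$, and such a monomial is $\lessdot$-smaller since that block is smaller than every block the proposed tuple uses.

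Second, and more seriously, you leave the per-row inequality --- the actual content of the proposition --- as a sketch, and you yourself name the unresolved point: why the pipe of value $w^J(j^*)$ re-enters row $kL+i^*$. The paper settles this by taking $(i_a,j_a)$ and $(i_b,j_b)$ as in the proof of Proposition~\ref{infthetaJ} (the $\prec$-maximal elements of $M_{O,C}(J)$ in the residue classes $a$ and $b$ mod $k$) and observing that for $a\in[l+1,k]$ and $b\in[1,l]\cup[a+1,k]$ one has $i_b>i_a=qk+a$: indeed $i_b=qk+b$ with $b>a$, or $i_b=(q+1)k+b$ with $b\le l$. Since $i_b>i_a$, the pipe from $(i_b,j_b),\langle i_b-1,j_b\rangle$ passes through row $i_a$; one takes the last element $(i_a,j'_b)$ of that row it passes, so that $w^J(b)=r(i_a,j'_b)$ and $(i_a,j'_b)\in J$, and the comparison $z^{(q)}_{a,r(i_a,j_a)}\gtrdot z^{(q)}_{a,r(i_a,j'_b)}$ then follows from the same case analysis on $O$-membership as in Proposition~\ref{initialD}. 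Without supplying this argument the proposal does not establish the proposition.
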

\begin{proof}
Since the $z_{i,j}^{(l)}$ are ordered first by $l$ and then by $i$, we immediately see that the initial term must have the form \[\pm z_{l+1,\bullet}^{(q)}\dots z_{k,\bullet}^{(q)}z_{1,\bullet}^{(q+1)}\dots z_{l,\bullet}^{(q+1)}.\] Consider $a\in[l+1,k]$, let us show that $z_{a,w^J(a)}^{(q)}>z_{a,w^J(b)}^{(q)}$ for any $b\in[1,n]\backslash[l+1,a]$. Let $(i_a,j_a)$ and $(i_b,j_b)$ be as in the proof of Proposition~\ref{infthetaJ}. Recall that $w^J(a)=r(i_a,j_a)$ and $w^J(b)=r(i_b,j_b)$. We also have $i_a=qk+a$. Since $i_b>i_a$, we may consider the smallest $j'_b\ge i_a$ such that the pipe starting with $(i_b,j_b),\al i_b-1,j_b\ar$ contains $(i_a,j'_b)$ (the last element of the form $(i_a,j)$ passed by the pipe). Then $w^J(b)=r(i_a,j'_b)$, also note that $(i_a,j'_b)\in J$. Consider the largest $j_0$ with $(i_a,j_0)\in J$. By our choice of $(i_a,j_a)$, either $(i_a,j_a)\in O$ and $(i_a,j)\notin O$ for all $j\in[j_a+1,j_0]$ or $j_a=j_0$. In the former case the variable $z_{a,r(i_a,j_a)}^{(q)}$ is $\lessdot$-maximal among all $z_{a,r(i_a,j)}^{(q)}$ with $(i_a,j)\in J$, hence \[z_{a,w^J(a)}^{(q)}=z_{a,r(i_a,j_a)}^{(q)}>z_{a,r(i_a,j'_b)}^{(q)}=z_{a,w^J(b)}^{(q)}.\] Suppose $j_a=j_0$. If $(i_a,j'_b)\notin O$, then $j'_b=j_b<j_a$ and $z_{a,r(i_a,j_a)}^{(q)}>z_{a,r(i_a,j'_b)}^{(q)}$. If $(i_a,j'_b)\in O$, then, by our choice of $j'_b$, there exists at least one $j'\in[j'_b+1,j_a]$ with $(i_a,j')\in O$: we may take $(i_a,j')$ to be the first element of the form $(i_a,j)$ in the pipe starting with $(i_b,j_b),\al i_b-1,j_b\ar$. This again provides $z_{a,r(i_a,j_a)}^{(q)}>z_{a,r(i_a,j'_b)}^{(q)}$. The case $a\in[1,l]$ and $b\in[a+1,l]$ is similar.
\end{proof}

\begin{proposition}
The $D_{i_1,\dots,i_k}^{(l)}$ form a sagbi basis of $R_\infty$ for $\lessdot$.
\end{proposition}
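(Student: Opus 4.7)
The plan is to mimic the proof of Proposition~\ref{thetasagbi}. Propositions~\ref{infthetaJ} and~\ref{infinitialD} together show that for every $J\in\cJ_\infty$ the initial form $\initial_\lessdot D^{(d(J))}_{w^J(1),\dots,w^J(k)}$ is, up to sign, the monomial $\theta_\infty(\varphi_\infty^{O,C}(X_J))\in T_\infty$. Since the $\varphi_\infty^{O,C}(X_J)$ generate $R_\infty^{O,C}$, applying the ring map $\theta_\infty$ (which is well defined on the subring of $\bC[Q,s]$ containing $R_\infty^{O,C}$) shows that $\theta_\infty(R_\infty^{O,C})$ is contained in the subalgebra $A\subseteq T_\infty$ generated by the initial forms of the $D^{(l)}_{i_1,\dots,i_k}$. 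Of course $A\subseteq \initial_\lessdot R_\infty$ trivially, so the chain
\[
\theta_\infty(R_\infty^{O,C})\ \subseteq\ A\ \subseteq\ \initial_\lessdot R_\infty
\]
is established, and it suffices to prove that the outer two rings coincide.

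To do this I would compare graded dimensions under the $\bZ\oplus\bZ$-grading $\grad_\infty$ introduced in Proposition~\ref{infidealsubalg}, in each of whose components both rings are finite dimensional. From its definition, $\theta_\infty$ acts on the exponent vectors of monomials by a linear operator which is triangular with nonzero diagonal entries (with respect to a suitable ordering of the $z_p$'s, entirely analogous to the unitriangular structure of $\xi$ observed in Section~\ref{monbasis}), hence $\theta_\infty$ is injective. This yields $\dim\theta_\infty(R_\infty^{O,C})[\la]=\dim R_\infty^{O,C}[\la]$. On the other side, the standard fact that passing to an initial subspace preserves the Hilbert function gives $\dim \initial_\lessdot R_\infty[\la]=\dim R_\infty[\la]$.

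The remaining dimension identity $\dim R_\infty^{O,C}[\la]=\dim R_\infty[\la]$ is the heart of the matter, and I would derive it from the results of~\cite{FMP}. That paper establishes at least one toric degeneration of $\Proj R_\infty$ arising from a chain-order polytope of $(Q,\prec)$ (namely, the order-polytope case $C=\varnothing$ and the chain-polytope case $O=\varnothing$), which forces the Hilbert function of the corresponding $R_\infty^{O,C}$ to agree with that of $R_\infty$. The chain-order polytopes of $(Q,\prec)$ are furthermore pairwise Ehrhart-equivalent, with the equivalence respecting the refined weighting that tracks both the multiplicative degree $k$ and the $t$-degree $l$; this propagates the equality of Hilbert functions to all partitions $(O,C)$.

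The principal obstacle is precisely this last step, since Ehrhart invariance in the refined bigraded form and its identification with $\dim R_\infty[\la]$ depend on the infrastructure of~\cite{FMP} rather than on any input native to the present section. Granting that input, the displayed chain of inclusions collapses to equalities, proving simultaneously that the subalgebra of initial forms equals $\initial_\lessdot R_\infty$ and that $\initial_\lessdot R_\infty\simeq R_\infty^{O,C}$ via $\theta_\infty$.
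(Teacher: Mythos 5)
Your outline is essentially the paper's: establish $\theta_\infty(R_\infty^{O,C}) \subseteq \initial_\lessdot R_\infty$ via Propositions~\ref{infthetaJ} and~\ref{infinitialD}, then close the sandwich by comparing $\grad_\infty$-graded dimensions, using that $\theta_\infty$ is injective. Your injectivity argument via the triangular exponent map is a reasonable variant of the paper's direct comparison of monomials.

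The genuine gap, which you acknowledge but do not resolve, is the independence of $\dim\theta_\infty(R_\infty^{O,C})[\cdot]$ from the partition $(O,C)$. You invoke an unproven \emph{bigraded Ehrhart equivalence} of chain-order polytopes of $(Q,\prec)$ to justify it, but no such refinement is established in the paper's references, and deducing it would itself require an argument. The paper instead handles this concretely and cheaply: by~\cite[Corollary~2.5]{FMP} the ring $R_\infty^{O,C}$ has a basis of products $\pi=\varphi_\infty^{O,C}(X_{J_1}\cdots X_{J_m})$ indexed by chains $J_1\subset\cdots\subset J_m$ in $\cJ_\infty$, and Proposition~\ref{infthetaJ} gives $\grad_\infty\theta_\infty(\pi)=km\oplus(d(J_1)+\cdots+d(J_m))$. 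Both the index set and the degree of the image depend only on the chain, not on $(O,C)$, so together with injectivity of $\theta_\infty$ the graded dimension of $\theta_\infty(R_\infty^{O,C})$ is manifestly the same for every $(O,C)$. This reduces the claim to a single base case, and the one the paper actually cites is $O=\{(i,i)\}_{i\ge k+1}$ via~\cite[Lemma~5.9]{FMP}; note the case $O=\varnothing$ you mention is excluded by the standing hypothesis that $O$ contain all $(i,i)$.
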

\begin{proof}
First let us show that $\theta_\infty$ is injective. Since the map is monomial, it suffices to show that distinct monomials have distinct images. Consider distinct $M_1=s^{b_0}\prod z_{(i,j)}^{b_{(i,j)}}$ and $M_2=s^{c_0}\prod z_{(i,j)}^{c_{(i,j)}}$, let us show that $\theta_\infty(M_1)\neq\theta_\infty(M_2)$. We may assume that the GCD of $M_1$ and $M_2$ is 1. We may also assume that $M_1/M_2$ is not a power of $s$ since this case is trivial. Among all $z_{(i,j)}$ with $b_{(i,j)}+c_{(i,j)}>0$ consider the variables with the largest $i$ and among these the variable with the largest $j$, denote it by $z_{(i',j')}$. Assume that $b_{(i',j')}>0$ while $c_{(i',j')}=0$. One sees that the variable $\theta'(z_{(i',j')})$ occurs in $\theta_\infty(M_1)$ but not in $\theta_\infty(M_2)$. 

Now, we are to show that $\theta_\infty(R^{O,C}_\infty)=\initial_\lessdot R_\infty$. By Propositions~\ref{infthetaJ} and~\ref{infinitialD} we have $\theta_\infty(R^{O,C}_\infty)\subset \initial_\lessdot R_\infty$. We claim that $\theta(R^{O,C}_\infty)$ and $R_\infty$ have the same graded dimensions with respect to $\grad_\infty$. By~\cite[Corollary 2.5]{FMP} the ring $R^{O,C}_\infty$ has a basis consisting of products $\pi=\varphi_\infty^{O,C}(X_{J_1}\dots X_{J_m})$ with $J_1\subset\dots\subset J_m$. By Proposition~\ref{infthetaJ} we have $\grad_\infty\theta_\infty(\pi)=km\oplus(d(J_1)+\dots+d(J_m))$. By the injectivity of $\theta_\infty$ this implies that the graded dimension of $\theta_\infty(R^{O,C}_\infty)$ does not depend on $(O,C)$ and it suffices to prove the equality for one partition. However, the case $O=\{(i,i)\}_{i\ge k+1}$ is due to~\cite{FMP}, namely the proof of~\cite[Lemma 5.9]{FMP} precisely shows that $\theta_\infty(R^{O,C}_\infty)=\initial_\lessdot R_\infty$ in this case.
\end{proof}

\begin{proof}[Proof of Theorem~\ref{infmain}]
Consider distinct $J_1,J_2\in\cJ_\infty$ with $d(J_1)=d(J_2)$. The sets $\{w^{J_1}(1),\dots,w^{J_1}(k)\}$ and $\{w^{J_2}(1),\dots,w^{J_2}(k)\}$ must differ, otherwise Propositions~\ref{infthetaJ} and~\ref{infinitialD} would imply $\theta_\infty(s\prod_{p\in M_{O,C}(J_1)}z_p)=\pm\theta_\infty(s\prod_{p\in M_{O,C}(J_2)}z_p)$. One also sees that for $m\ge 0$ there are $n\choose k$ order ideals $J$ with $d(J)=m$. 
The isomorphism claim follows. 

Propositions~\ref{infthetaJ} and~\ref{infinitialD} also show that the maps $\theta_\infty\circ\varphi_\infty^{O,C}$ and $\varphi_\lessdot \circ\psi_\infty$ coincide on $\bC[\cJ_\infty]$. Since $\psi_\infty$ and $\theta_\infty$ are injective, $\psi_\infty$ must identify the kernels $I_\infty^{O,C}$ and $\initial_{\lessdot^\varphi}I_\infty$ of $\varphi_\infty^{O,C}$ and $\varphi_\lessdot$ while $\theta_\infty$ must identify their images $R_\infty^{O,C}$ and $\initial_\lessdot R_\infty$.
\end{proof}


\begin{thebibliography}{}


\bibitem[AB]{AB}
V.~Alexeev, M.~Brion, \textit{Toric degenerations of spherical varieties}, Selecta
Mathematica \textbf{10} (2005), 453--478, 2005.

\bibitem[A]{A}
D.~Anderson, \textit{Okounkov bodies and toric degenerations}, Mathematische Annalen
\textbf{356} (2013), 1183--1202.

\bibitem[ABS]{ABS} 
F. Ardila, T. Bliem, D. Salazar, {\it Gelfand--Tsetlin polytopes and Feigin--Fourier--Littelmann--Vinberg polytopes as marked poset polytopes},
Journal of Combinatorial Theory, Series A {\bf 118} (2011), no. 8, 2454--2462.



\bibitem[BCKS]{BCKS}
V.~Batyrev, I.~Ciocan-Fontanine, B.~Kim, D.~van~Straten,
\textit{Mirror symmetry and toric degenerations of partial flag manifolds}, Acta Mathematica 184 (2000), no. 1, 1--39.

\bibitem[BB]{BB}
N. Bergeron, S. C. Billey, \textit{RC-graphs and Schubert polynomials}, Experimental Mathematics \textbf{2} (1993), 257--269

\bibitem[BL]{BL}
J. Brown, V. Lakshmibai, \emph{Singular loci of Grassmann-Hibi toric varieties}, The Michigan Mathematical Journal \textbf{59} (2010), no. 2, 243--267. 

\bibitem[BLMM]{BLMM}
L. Bossinger, S. Lamboglia, K. Mincheva, F. Mohammadi, \emph{Computing Toric Degenerations of Flag Varieties}. In: G. Smith, B. Sturmfels (eds), \emph{Combinatorial Algebraic Geometry}, Fields Institute Communications, Vol. 80, Springer-Verlag, New York, 2017


\bibitem[BF]{BF}
A. Braverman, M. Finkelberg, {\it Weyl modules and $q$-Whittaker functions}, Math. Ann., {\bf 359} (2014), 45--59.

\bibitem[Ca]{Ca}
P. Caldero, \emph{Toric degenerations of Schubert varieties}, Transformation Groups \textbf{7} (2002), 51--60.

\bibitem[ChL]{ChL} 
V. Chari, S. Loktev, \emph{Weyl, Demazure and fusion modules for the current algebra of $\fsl_{r+1}$}, Advances in Mathematics, \textbf{207} (2006), 928--960.


\bibitem[CFL]{CFL}
R. Chirivi', X. Fang, P. Littelmann, \emph{Seshadri stratifications and standard monomial theory},  Inventiones mathematicae \textbf{234} (2023), 489--572.



\bibitem[Ch]{Ch}
R. Chiriv\`i, \emph{LS algebras and application to Schubert varieties}, Transformation Groups \textbf{5} (2000), 245--264

\bibitem[CM]{CM}
O. Clarke, F. Mohammadi, \textit{Toric degenerations of Grassmannians from matching fields}, Algebraic Combinatorics \textbf{2} (2019), 1109--1124.





\bibitem[CLS]{CLS}
D. Cox, J. Little, H. Schenck, \emph{Toric Varieties}, Graduate Studies in Mathematics, Vol. 124, American Mathematical Society, Providence, 2011.




\bibitem[DF]{DF}
I.~Dumanski, E.~Feigin, {\it Reduced arc schemes for Veronese embeddings and global Demazure modules}, \url{https://arxiv.org/abs/1912.07988}.

\bibitem[FFFM]{FFFM}
X. Fang, E. Feigin, G. Fourier, I. Makhlin, {\it Weighted PBW degenerations and tropical flag varieties}, Communications in Contemporary Mathematics {\bf 21} (2019), no. 1, 1850016.

\bibitem[FF]{FF}
X. Fang, G. Fourier, \textit{Marked chain-order polytopes}, European Journal of Combinatorics \textbf{58} (2016), 267--282.

\bibitem[FaFL1]{FaFL1}
X.~Fang, G.~Fourier, P.~Littelmann, \emph{On toric degenerations of flag varieties}, Representation Theory - Current Trends and Perspectives, EMS Series of Congress Reports, 187--232, 2016.

\bibitem[FaFL2]{FaFL2}
X. Fang, G. Fourier, P. Littelmann, \emph{Essential bases and toric degenerations arising from birational sequences}, Advances in Mathematics {\bf 312} (2017), 107--149.

\bibitem[FFLP]{FFLP}
X. Fang, G. Fourier, J.-P. Litza, C. Pegel, \emph{A Continuous Family of Marked Poset Polytopes}, SIAM Journal on Discrete Mathematics \textbf{34} (2020), no. 1, 611--639.

\bibitem[FFP]{FFP}
X. Fang, G. Fourier, C. Pegel, \emph{The Minkowski Property and Reflexivity of Marked Poset Polytopes}, The Electronic Journal of Combinatorics, \textbf{27} (2020).



\bibitem[FL]{FL}
X. Fang, P. Littelmann, \emph{From standard monomial theory to semi-toric degenerations via Newton--Okounkov bodies}, Transactions of the Moscow Mathematical Society \textbf{78} (2017), no. 2, 331--356.


\bibitem[FeFr]{FeFr}
B. Feigin, E. Frenkel, {\it Affine Kac-Moody algebras and semi-infinite flag manifolds}, Comm. Math. Phys. \textbf{128} (1990), 161--189.

\bibitem[Fe]{Fe}
E.~Feigin, \emph{${\mathbb G}_a^M$ degeneration of flag varieties}, Selecta Mathematica, New Series {\bf 18} (2012), no. 3, 513--537

\bibitem[FFL1]{FFL1} 
E.~Feigin, G.~Fourier, P.~Littelmann,
\emph{PBW filtration and bases for irreducible modules in type ${A}_n$}, Transformation Groups {\bf 16} (2011), 71--89.

\bibitem[FFL2]{FFL2}
E. Feigin, G. Fourier, P. Littelmann, \emph{Favourable modules: filtrations, polytopes, Newton-Okounkov bodies and flat degenerations}, Transformation Groups {\bf 22} (2017), 321--352.




\bibitem[FeM]{FeM}
E.~Feigin, I.~Makedonskyi, {\it Semi-infinite Plücker Relations and Weyl Modules}, Int. Math. Res. Not. IMRN {\bf 14} (2020). 4357--4394.

\bibitem[FM]{FM}
E. Feigin, I. Makhlin, \textit{Relative poset polytopes and semitoric degenerations}, Selecta Mathematica New Series \textbf{30} (2024), 48.

\bibitem[FMP]{FMP}
E. Feigin, I. Makhlin, A. Popkovich, \textit{Beyond the Sottile–Sturmfels Degeneration of a Semi-Infinite Grassmannian}, International Mathematics Research Notices, \url{https://doi.org/10.1093/imrn/rnac116}.

\bibitem[FiM]{FiM}
M. Finkelberg; I. Mirković, {\it Semi-infinite flags. I. Case of global curve $\mathbb{P}^1$}, Differential topology, infinite-dimensional Lie algebras, and applications, Ser. 2 {\bf 194} (1999), 81--112.


\bibitem[FK]{FK}
S. Fomin, A. N. Kirillov, \textit{The Yang-Baxter equation, symmetric functions, and Schubert polynomials}, Discrete Mathematics \textbf{153} (1996), 123--143.

\bibitem[Fu]{Fu}
N.~Fujita, \textit{Newton-Okounkov polytopes of flag varieties and marked chain-order polytopes}, \url{https://arxiv.org/abs/2104.09929}.


\bibitem[GT]{GT}
I. Gelfand, M. Tsetlin, {\it Finite dimensional representations of the group of unimodular matrices}, Doklady Akademii Nauk USSR {\bf 71} (1950), no. 5, 825--828.

\bibitem[GL]{GL}
N. Gonciulea, V. Lakshmibai, \emph{Degenerations of flag and Schubert varieties to toric varieties}, Transformation Groups {\bf 1} (1996), 215--248.

\bibitem[GHKK]{GHKK}
M. Gross, P. Hacking, S. Keel, M. Kontsevich, \textit{Canonical bases for cluster algebras}, Journal of the American Mathematical Society \textbf{31} (2018), 497--608.



\bibitem[H]{H}
T. Hibi, \textit{Distributive lattices, affine semigroup rings and algebras with straightening laws}. In
Commutative Algebra and Combinatorics, 93--109, Advanced Studies in Pure Mathematics,
Vol. 11, Amsterdam, 1987.







\bibitem[Kat]{Kat}
S. Kato, \textit{Demazure character formula for semi-infinite flag varieties}, Mathematische Annalen \textbf{371} (2018), no. 3--4, 1769--1801.

\bibitem[Ka]{Ka}
K. Kaveh, \textit{Crystal bases and Newton--Okounkov bodies}, Duke Mathematical Journal \textbf{164} (2015), 2461--2506.

\bibitem[KaKh]{KaKh}
K. Kaveh, A. G. Khovanskii, \textit{Newton--Okounkov bodies, semigroups of integral points, graded algebras
and intersection theory}, Annals of Mathematics \textbf{176} (2012), 925--978.



\bibitem[KNN]{KNN}
G. Kemper, Ngo V. T., Nguyen T. V. A., \textit{Toward a theory of monomial preorders}, Mathematics of Computation \textbf{87} (2018), 2513--2537.

\bibitem[KST]{KST}
V. Kiritchenko, E. Smirnov, V. Timorin, \textit{Schubert calculus and Gelfand-Zetlin polytopes}, Russian Mathematical Surveys \textbf{67} (2012), 685--719.

\bibitem[KnM]{KnM}
A. Knutson, E.Miller, Gr\"obner geometry of Schubert polynomials, Annals of Mathematics \textbf{161} (2005), 1245--1318.

\bibitem[K]{K}
M. Kogan, \textit{Schubert geometry of flag varieties and Gelfand-Cetlin theory}, Ph.D. thesis, Massachusetts Institute of Technology, 2000.

\bibitem[KM]{KM}
M. Kogan, E. Miller, \emph{Toric degeneration of Schubert varieties and Gelfand–Tsetlin polytopes}, Advances in Mathematics {\bf 193} (2005), no. 1, 1--17.




\bibitem[M1]{M1}
I. Makhlin, \emph{Gelfand--Tsetlin degenerations of representations and flag varieties}, Transformation Groups \textbf{27} (2022), 563--596.

\bibitem[M2]{M2}
I. Makhlin, \emph{Gr\"obner fans of Hibi ideals, generalized Hibi ideals and flag varieties}, Journal of Combinatorial Theory, Series A \textbf{185} (2022), 105541.

\bibitem[MS]{MS}
E. Miller, B. Sturmfels, {\it Combinatorial Commutative Algebra}, Graduate Texts in Mathematics, Vol. 227, Springer-Verlag, New York, 2005.



\bibitem[MY]{MY}
A. Molev, O. Yakimova, \emph{Monomial bases and branching rules}, Transformation Groups \textbf{26} (2021), 995--1024.












\bibitem[R]{R}
M. Reineke, \emph{On the coloured graph structure of Lusztig’s canonical basis}, Math. Ann. {\bf 307} (1997), 705–723.



\bibitem[So]{So}
F. Sottile, \textit{Real rational curves in Grassmannians}, J. Amer. Math. Soc. \textbf{13} (2000), 333--341.

\bibitem[SoS]{SoS}
F. Sottile, B. Sturmfels, {\it A sagbi basis for the quantum Grassmannian}, J. Pure Appl. Algebra , {\bf 158} Issues 2–3 (2001), 347-366.

\bibitem[St]{St}
R.~P.~Stanley, \emph{Two poset polytopes}, Discrete \& Computational Geometry \textbf{1} (1986), 9--23.

\bibitem[Stu]{Stu}
B. Sturmfels, \textit{Algorithms in invariant theory}, Texts Monogr. Symbol. Comput., Springer (1993).





\end{thebibliography}
\end{document}